\title{Maximal $L^p$-regularity for perturbed evolution equations in Banach spaces}
\author{A. AMANSAG, H. BOUNIT, A. DRIOUICH, and S. HADD}
\address{Department of Mathematics, Faculty of Sciences, Ibn Zohr University, Hay Dakhla, BP8106, 80000--Agadir, Morocco; ahmed.amansag@uiz.ac.ma, h.bounit@uiz.ac.ma, a.driouich@uiz.ac.ma, s.hadd@uiz.ac.ma}
\thanks{21/10/2018 Version}
 \keywords{Operator semigroup, unbounded perturbation, Maximal regularity, Banach space, feedback theory, Bergman space, integro-differential equations}
\newtheorem{thm}{Theorem}[section]
\newtheorem{prop}[thm]{Proposition}
\newtheorem{lem}[thm]{Lemma}
\newtheorem{cor}[thm]{Corollary}
\theoremstyle{definition}
\newtheorem{dfn}[thm]{Definition}
\theoremstyle{remark}
\newtheorem{rem}[thm]{Remark}
\numberwithin{equation}{section}
\numberwithin{equation}{section}
\newcommand{\sgT}{\mathbb{T}}
\renewcommand{\le}{\leqslant}\renewcommand{\leq}{\leqslant}
\renewcommand{\ge}{\geqslant}\renewcommand{\geq}{\geqslant}
\newcommand{\R}{{\mathbb R}}
\newcommand{\T}{{\mathbb T}}                   
\newcommand{\N}{{\mathbb N}}                  
\newcommand{\C}{{\mathbb C}}                  
\newcommand{\F}{{\mathbb F}}
\newcommand{\D}{{\mathbb D}}
\newcommand{\B}{{\mathbb B}}
\newcommand{\Ss}{{\mathbb S}}
\newcommand{\A}{{\mathbb A}}
\def\ka{\kappa}
\def\al{\alpha}
\def\om{\omega}
\def\ga{\gamma}
\def\si{\sigma}
\def\Si{\Sigma}
\def\la{\lambda}
\def\t{\tau}
\def\calR{{\mathcal{R}}}
\def\calL{{\mathcal{L}}}
\def\calA{{\mathcal{A}}}
\def\calC{{\mathcal{C}}}
\def\R{\mathbb R}
\def\P{\mathbb P}
\def\D{\mathbb D}
\def\C{\mathbb C}
\def\N{\mathbb N}
\def\F{\mathbb F}
\def\K{\mathbb K}
\def\T{\mathbb T}
\def\G{\mathbb G}
\sloppy \pagestyle{plain} \textwidth=16cm \textheight=22.5cm
\begin{document}
\maketitle

\renewcommand{\sectionmark}[1]{}
\begin{abstract}
The main purpose of this paper is to investigate the concept of maximal $L^p$-regularity for perturbed evolution equations in Banach spaces. We mainly consider three classes of perturbations: Miyadera-Voigt perturbations, Desch-Schappacher perturbations, and more general Staffans-Weiss perturbations. We introduce conditions for which the maximal $L^p$-regularity can be preserved under these kind of perturbations. We give examples for a boundary perturbed heat equation in $L^r$-spaces and a perturbed boundary integro-differential equation. We mention that our results mainly extend those in the works: [P. C. Kunstmann and L. Weis, Ann. Scuola Norm. Sup. Pisa Cl. Sci. (4) 30 (2001), 415-435] and  [B.H. Haak, M. Haase, P.C. Kunstmann,  Adv. Differential Equations 11 (2006), no. 2, 201-240].
\end{abstract}

\section{Introduction}\label{sec:intro}

In this paper we investigate the maximal $L^p$--regularity of evolution equations of the type
\begin{align}\label{PM} \begin{cases} \dot{z}(t)=A_m z(t)+Pz(t)+f(t),& t\ge 0,\cr z(0)=0,\cr Gz(t)=Kz(t),& t\ge 0,\end{cases}\end{align}
where $A_m:Z\subset X\to X$ is a linear closed operator in a Banach space $X$ with domain $D(A_m)=Z,$ $P:Z\to X$ is an additive linear perturbation of $A_m,$  $G,K:Z\to U$ are linear boundary operators  ($U$ is another Banach space) and $f\in L^p(\R^+,X)$ with $p\ge 1$ is a real number. Actually, we assume that $A:=A_m$ with domain $D(A)=\ker(G)$ is a generator of a strongly continuous semigroup $\T:=(\T(t))_{t\ge 0}$ on $X$.

The concept of maximal regularity has been the subject of several works for many years, e.g.  \cite{CanVes,Dore,CoulDuon,Simon}, and the monograph  \cite{DenHiePru}. The main purpose of these works is to give sufficient conditions on the operator $A$ so as the problem \eqref{PM} with $P\equiv 0$ and $K\equiv 0,$ which can be written as  \begin{align}\label{A-only}\dot{z}=Az+f,\;z(0)=0,\end{align} has a maximal $L^p$--regularity. A necessary condition for the  maximal regularity of the evolution equation \eqref{A-only} is that $A$ is a generator of an analytic semigroup. This condition is also sufficient only if the work space $X$ is a Hilbert space, see \cite{Simon}. In the case of UMD Banach space $X,$ Weis \cite{LutzWeis2} introduced necessary and sufficient conditions in terms of $\calR$-boundedness of the operator $A$ for the maximal $L^p$-regularity of \eqref{A-only}.

Let us first consider the problem \eqref{PM} with $K\equiv 0,$ which can be regarded as the following evolution equation
\begin{align}\label{AP-only}\dot{z}=(A+P)z+f,\;z(0)=0.\end{align}
 The maximal $L^p$-regularity for \eqref{AP-only} has been studied by many authors, e.g. \cite{AmaHieSim,Dore,HaHaKu,KunWei,PruSoh}. In \cite{KunWei}, the authors proved that the problem \eqref{AP-only} has the maximal $L^p$--regularity if $X$ is a UMD space, the problem  \eqref{A-only} has a maximal $L^p$--regularity and the perturbation $P$ is small, i.e. for every $\delta>0,$ there exists $c_\delta>0$ such that $\|P x\|\le \delta \|Ax\|+c_\delta \|x\|,$ for all $x\in D(A)$. The proof is mainly based on the fact that $\mathcal{R}$-sectoriality is preserved under $A$-small perturbations and the UMD property of the space $X$. In Section \ref{sec:4} we generalize this result to the case of general Banach spaces. In fact, we will assume that the restriction of $P$ on $D(A)$ is a $p$--admissible observation operator for $A$ (see Section \ref{sec:2} for the definition), hence $(A+P,D(A))$ is a generator of a strongly continuous semigroup on the Banach space $X$, see \cite{Hadd}. In Theorem \ref{theorem:Miyadera} we will show  that if the problem \eqref{A-only} has a maximal $L^p$--regularity then it is also for its perturbed problem \eqref{AP-only}. To compare this result with the result proved in \cite{KunWei}, we first prove in Lemma \ref{lemma:observation_fractional_power} that the fact that $A$ generates an analytic semigroup on a Banach space $X$ and $P$ is $p$--admissible observation operator for $A,$ implies that for $\beta>\frac{1}{p}$ the operator $P(-A)^{-\beta}$ has a bounded extension to $X$. As $(-A)^\beta$ is a $A$--small perturbation and $Px= P(-A)^{-\beta} (-A)^\beta x$ then $P$ is a $A$--small perturbation, and then the result in \cite{KunWei} holds also in Banach spaces.

Return now to our initial boundary problem \eqref{PM}. This later can be reformulated as
\begin{align}\label{calA-max} \dot{z}=(\calA+P)z+f,\qquad z(0)=0,\end{align}
where $\calA: D(\calA)\subset X\to X$ is the linear operator defined by
\begin{align*} \calA:=A_m,\qquad D(\calA)=\{x\in Z: Gx=Kx\}.\end{align*}
In addition to our assumption at the beginning  of this section, we also suppose that $G:Z\to U$ is surjective. Let then $\D_\la\in\calL(U,Z)$ ($\la\in\rho(A)$) be the Dirichlet operator associated with $A_m$ and $G$, see Section \ref{sec:3}. In order to state our main results on well-posedness and maximal $L^p$-regularity of the problem \eqref{calA-max}, we select $B:=(\la I-A_{-1})\D_\la\in\calL(U,X_{-1})$ for $\la\in\rho(A)$, where $X_{-1}$ is the extrapolation space associated to $A$ and $X$, and $A_{-1}:X\to X_{-1}$ is the extension of $A$ to $X,$ which is a generator of a strongly continuous semigroup on $X_{-1}$. We assume that $B$ is a $p$-admissible control operator for $A$, see the next section for the definition and notation. If $K$ is bounded, i.e. $K\in\calL(X,U)$ then it is known that the operator $\calA$ coincides with the part of the operator $A_{-1}+BK$ in $X$, which generates a strongly continuous semigroup on $X$ (see e.g. \cite{{Grei},HaddManzoRhandi,Sala}). In this case, we prove (see Theorem \ref{Desch_SchapThm}) that if the problem \eqref{A-only} has the  maximal $L^p$-regularity, then the evolution equation \begin{align}\label{calA-only}\dot{z}=\calA z+f,\quad z(0)=0\end{align} has also the same property. In addition if we assume that $(A,B,P_{|D(A)})$ generates a regular linear system on $X,U,X$, then $P$ is still $p$-admissible observation operator for $\calA$ and then the problem \eqref{calA-max} is well-posed and has the  maximal $L^p$-regularity, see Theorem \ref{theorem:generalisation_of_Hadd_Manzo_Rhandi} and Theorem \ref{max-reg-DS+MV}. Let us now assume that the boundary operator $K$ is unbounded $K:Z\varsubsetneq X\to U$. This situation is quite difficult which needs additional assumption to treat the well-posedness and  maximal $L^p$-regularity. According to \cite{HaddManzoRhandi}, if we assume that $(A,B,K_{|D(A)})$ is regular on $X,U,U$ with $I_U:U\to U$ as an admissible feedback, then the problem \eqref{calA-only} is well-posed on the Banach space $X$. Moreover, if the problem \eqref{A-only} has a  maximal $L^p$-regularity and $\|\la \D_\la\|\le \kappa$ for any ${\rm Re}\la>\la_0,$ where $\la_0\in\R$ and $\kappa>0$ are constants, then the problem \eqref{calA-only} has also the maximal $L^p$-regularity on a non reflexive Banach space $X$, see Theorem \ref{theorem:Staffans_Weiss_Non_Reflexive_Case}. On the other hand, we assume that  $(A,B,P_{|D(A)})$ generates a regular linear system on $X,U,X$. Then, in Theorem \ref{theorem:generalisation_of_Hadd_Manzo_Rhandi}, we prove that the problem \eqref{calA-max} is well-posed. Corollary \ref{SF-big-equa-result} shows that the problem \eqref{calA-max} has the  maximal $L^p$-regularity. If $X$ is a UMD space then we use $\calR$-boundedness to prove the  maximal $L^p$-regularity for the evolution equation \eqref{calA-max}, see Theorem \ref{theorem:staffans-weiss-R-boundedness} and Corollary \ref{SF-big-equa-result}. We mention that in \cite{HaHaKu}, the authors proved perturbation theorems for sectoriality and $\calR$-sectoriality in general Banach spaces. They gives conditions on intermediate spaces $Z$ and $W$ such that, for an operator $S:Z \rightarrow W$ of small norm, the operator $A+S$ is sectorial (resp. $\calR$-sectorial) provided $A$ is sectorial (resp. $\calR$-sectorial). Their results are obtained by factorizing $S=BC$. As $\calR$-sectoriality implies maximal regularity in UMD spaces, these theorems yield to maximal regularity perturbation only in UMD spaces.

In Section \ref{sec:5},  we have used product spaces and Bergman spaces to reformulate boundary perturbed intego-differential equations as our abstract boundary evolution equation \eqref{PM}. This allows us to translate the results on well-posedness and maximal $L^p$--regularity obtained for the problem \eqref{PM} to intego-differential equations.

 In the next section, we first recall the necessary material about feedback theory of infinite dimensional linear systems. We then use this theory to prove the well-posedness of the evolution equation \eqref{PM} in Section \ref{sec:3}. Our main results on maximal $L^p$--regularity for the problem \eqref{PM} are gathered in Section \ref{sec:4}. The last section is devoted to apply the obtained results to perturbed intego-differential equations.

 {\bf Notation.} Hereafter $p,q\in [1,\infty]$ and $T>0$ are real numbers such that $\frac{1}{p}+\frac{1}{q}=1$. If $X$ is a Banach space, we denote by $L^p([0,T];X)$ the space of all $X$-valued Bochner integrable functions. For any $\theta \in (0,\pi)$,  $\Sigma_\theta$ is the following sector:
$$
\Sigma_\theta:=\{z\in \mathbb{C}^*; \vert arg z\vert < \theta \}.
$$
For any $\alpha \in \mathbb{R}$, the right half-plane is defined by
$$
\mathbb{C}_\alpha=:\{z\in \mathbb{C}; Re z>\alpha\}.
$$
Given a semigroup $\sgT:=(\sgT(t))_{t\geq 0}$ generated by an operator $A:D(A)\subset X\to X$, we will always denote by $\omega_0(\mathbb{T})$(or $\omega_0(A)$) the growth bound of this semigroup. The resolvent set of $A$ is denoted by $\rho(A)$. Preferably, we denote the resolvent operator of $A$ by $R(\la,A):=(\la-A)^{-1}$ for any $\la\in\rho(A),$ where the notation $\la-A$ means $\la I-A$.

\section{Feedback theory of infinite dimensional linear systems}\label{sec:2}
In this section, we gather definitions and results from feedback theory of infinite dimensional linear systems mainly developed in the references \cite{Sala,Staff,TucWei,WeiRegu}.  We also give some new development of this theory. Hereafter, $X$ and $U$ are Banach spaces and $p\in [1,\infty[$.

It is known (see e.g. \cite{Sala,Staff}) that partial differential equations with boundary control and point observation can be reformulated as the following distributed linear system
\begin{equation}
  \label{linContSys}
  \begin{cases}
  \dot{x}(t) = Ax(t)+Bu(t) , & \mbox{ } t\geq 0 \\
  y(t) = Kx(t), & \mbox{}\\
  x(0)=x_0, & \mbox{}
  \
  \end{cases}
\end{equation}
where $A:D(A)\subset Z\subset X\to X$ is the generator of a strongly continuous semigroup $\T:=(\T(t))_{t\ge 0}$ on $X$ with $Z$ is a Banach space continuously and densely embedded in $X$, $B\in\calL(U,X_{-1})$ is a control operator such that
\begin{align*}
R(\la,A_{-1})B\in\calL(U,Z),\qquad \la\in\rho(A),
\end{align*}
and $K\in\calL(Z,U)$ is an observation operator. Here  $X_{-1}$ is the completion of $X$ with respect to the norm $\Vert R(\lambda,A)\cdot\Vert$. We recall that we can extend $\T$ to another strongly continuous semigroup $\T_{-1}:=(\sgT_{-1}(t))_{t\geq 0}$ on $X_{-1}$ with  generator $A_{-1}:X\to X_{-1}$, the extension of $A$ to $X$ (see e.g. \cite[chap.2]{EngNag}).
The mild solution of the system \eqref{linContSys} is given by:
\begin{equation}
\label{solVarConstForm}
x(t) = \sgT(t)x_0 + \int_0^t \sgT_{-1}(t-s)Bu(s) ds \qquad x_0 \in X,
\end{equation}
where the integral is taken in $X_{-1}$. Formally, the well-posedness of the system  \eqref{linContSys} means that the state satisfies $x(t)\in X$ for any $t\ge 0,$ the observation function $y$ is extended to a locally $p$-integrable function  $y\in L^p_{loc}([0,\infty),U)$ satisfying the following property:  for any $\t>0,$ there exists a constant $c_\tau>0$ such that
\begin{align*}
\|y\|_{L^p([0,\t],U)}\le c_\t \left(\|x_0\|+\|u\|_{L^p([0,\t],U)}\right),
\end{align*}
for any initial state $x_0\in X$ and any control function $u\in L^p_{loc}([0,\infty),U)$. In order to mathematically explain this concept, let us define
\begin{align*}
C:=K_{|D(A)}\in\calL(D(A),U).
\end{align*}
We also need the following definition.
\begin{dfn}
\begin{itemize}
\item[(i)]  $B\in \mathcal{L}(U,X_{-1})$ is called $p$-admissible control operator for $A$, if there exists $t_0 > 0$ such that :
\begin{equation*}
\Phi_{t_0} u := \int_0^{t_0} \sgT_{-1}(t-s)Bu(s) ds \in X
\end{equation*}
for any $u\in L^p_{loc}([0,\infty),U)$. We also say that $(A,B)$ is $p$-admissible.
\item[(ii)] $C\in \mathcal{L}(D(A),Y)$ is called $p$-admissible observation operator for $A$, if there exist $\alpha >0$ and $\kappa:=\kappa_\al>0$ such that:
\begin{equation}\label{walid}
 \int_0^\alpha \Vert C\sgT(t)x\Vert_Y^p dt \leq \ka^p \Vert x \Vert^p,
\end{equation}
for all $x\in D(A)$. We also say that $(C,A)$ is $p$-admissible.
\end{itemize}
\end{dfn}
Let us now describe some consequences of this definition. If $B$ is $p$-admissible control operator for $A,$ then by the closed graph theorem one can see that for any $t\ge 0,$ $$\Phi_t\in\calL(L^p([0,t],U),X).$$
This implies that the state of the system  \eqref{linContSys} satisfies $x(t)=\T(t)x_0+\Phi_t u\in X$ for any $t\ge 0,\;x_0\in X$ and $u\in L^p_{loc}([0,\infty),U)$.  According to \cite{Weis-ad-cont}, for all $0<\t_1\le \t_2,$
\begin{align}\label{norm-phi-estim}
\|\Phi_{\t_1}\|\le \|\Phi_{\t_2}\|.
\end{align} Now if $C$ is $p$-admissible observation operator for $A,$ then due to \eqref{walid}, the map $\Psi_\infty: D(A)\to L^p_{loc}([0,\infty),U)$ defined by $\Psi_\infty x := C\sgT(\cdot)x$, can be  extended to a bounded operator $\Psi_\infty: X\rightarrow L_{loc}^p([0,\infty);U)$. For any $x\in X$ and $t\ge 0$, we define the family $\Psi_t x := \Psi_\infty x$ on $[0,t]$. Then for all $t\ge 0,$ $$\Psi_t\in \mathcal{L}\left( X, L^p_{loc}([0,\infty),U)\right).$$
On the other hand, let us consider the linear operator
\begin{align*}
D(C_\Lambda)&:=\left\{x\in X: \lim_{s \rightarrow +\infty} s C R(s,A)x \;\text{exists in}\; U\right\}\\
C_\Lambda x &:= \lim_{s \rightarrow +\infty}s C R(s,A)x.
\end{align*}
Clearly, $D(A)\subset D(C_\Lambda)$ and $C_\Lambda=C$ on $D(A)$. This shows that $C_\Lambda$ is in fact an extension of $C,$ called the \textit{Yosida} extension of $C$ w.r.t. $A$. We note that if $C$ is $p$-admissible for $A,$ then $\T(t)X\subset D(C_\Lambda)$  and
\begin{align*}
(\Psi_\infty x)(t)=C_\Lambda \T(t)x,
\end{align*}
for any $x\in X$ and a.e. $t>0$.

In the sequel, we assume that $B$ and $C$ are $p$-admissible for $A$ and set
\begin{align*}
W^{2,p}_{0,loc}([0,\infty),U):=\left\{u\in W^{2,p}_{loc}([0,\infty),U):u(0)=0\right\}.
\end{align*}
This space is dense in $L^p_{loc}([0,\infty),U)$. Remark that for any $u\in W^{2,p}_{0,loc}([0,\infty),U),\; t\ge 0$ and by assuming $0\in \rho(A)$ (without loss of generality) and using an integration by parts, we have
\begin{align*}
\Phi_t u=R(0,A_{-1})Bu(t)-R(0,A)\Phi_t \dot{u}\in Z.
\end{align*}
On the other hand, using the fact that $KR(0,A_{-1})B\in\calL(U),\;CR(0,A)\in\calL(X,U)$ and \eqref{norm-phi-estim}, the application $(t\mapsto K\Phi_t u)\in L^p_{loc}([0,\infty),U)$ for any $u\in W^{2,p}_{0,loc}([0,\infty),U)$. Thus we have defined an application
\begin{align}\label{inp-out-infini}
\F_\infty: W^{2,p}_{0,loc}([0,\infty),U)\to L^p_{loc}([0,\infty),U),\qquad u\mapsto \F_\infty u= K\Phi_{\cdot} u.
\end{align}
\begin{dfn}\cite{WeiAdmiss}
Let $B$ and $C$ be $p$-admissible control and observation operators for $A,$ respectively. We say that the triple $(A,B,C)$ generates a well-posed system $\Sigma$ on $X,U,U$, if the operator $\F_\infty$ defined by \eqref{inp-out-infini} satisfies the following property: For any $\al>0$ there exists a constant $\vartheta_\al>0$ such that for all $u\in  W^{2,p}_{0,loc}([0,\infty),U),$
\begin{align}\label{F-infty-estim}
\left\| \F_\infty u\right\|_{L^p([0,\al],U)}\le \vartheta_\al \|u\|_{L^p([0,\al],U)}.
\end{align}
The operator $\F_\infty$ is called the extended input-output operator of $\Si$.
\end{dfn}
If $(A,B,C)$ generates a well-posed system $\Sigma$ on $X,U,U$, then we have two folds: first the state of \eqref{linContSys} satisfies $x(t)\in X$ for all $t\ge 0,$ and second $\F_\infty$ have an extension $\F_\infty\in \calL(L^p_{loc}([0,\infty),U)),$ due to \eqref{F-infty-estim}. Observe that the observation function $y$ verifies
\begin{align}\label{y-localy}
y(\cdot):=y(\cdot;x_0,u)=CT(\cdot)x_0+\F_\infty u=(\begin{smallmatrix}\Psi_\infty& \F_\infty\end{smallmatrix})(\begin{smallmatrix}x_0\\ u\end{smallmatrix}),
\end{align}
for all $x_0\in D(A)$ and $u\in  W^{2,p}_{0,loc}([0,\infty),U)$. By   density of $D(A)\times W^{2,p}_{0,loc}([0,\infty),U)$ in $X\times L^p_{loc}([0,\infty),U)$, the function $y$ is extended to a function $y\in L^p_{loc}([0,\infty),U)$ such that
\begin{align*}
y=\Psi_\infty x_0+\F_\infty u,\qquad \forall (x_0,u)\in X\times L^p_{loc}([0,\infty),U).
\end{align*}
We now turn out to give a representation of the observation function $y$ in terms of the observation operator $C$ and the state $x(\cdot)$. To that purpose Weiss \cite{WeiRegu,WeiTrans} introduced the following subclass of well-posed linear systems.
\begin{dfn}
Let $(A,B,C)$ generates a well-posed system $\Si$ on $X,U,U$ with   extended input-output operator $\F_\infty$. This system is called regular (with feedthrough $D=0$) if :
$$
\lim_{\tau \rightarrow 0} \frac{1}{\tau} \int_0^\tau (\mathbb{F}_\infty u_{z_0})(s)ds = 0
$$
with $u_{z_0}(s) = z_0$ for all $s\geq 0 $, is a constant control function.
\end{dfn}
According to Weiss \cite{WeiRegu,WeiTrans}, if $(A,B,C)$ generates a regular system $\Si$ on $X,U,U$, then the state and the observation function of the linear system \eqref{linContSys} satisfy
\begin{align}\label{representation-y}
x(t)\in D(C_\Lambda)\quad\text{and}\quad y(t)=C_\Lambda x(t),
\end{align}
for any initial state $x(0)=x_0\in X$, any control function $u\in L^p([0,\infty),U)$ and a.e. $t\ge 0$.
\begin{dfn}\label{Feedback-admisible}
Let a triple  $(A,B,C)$ generates a well-posed system $\Si$ on $X,U,U$ with   extended input-output operator $\F_\infty$. Define
$$
 \mathbb{F}_\tau u:= \mathbb{F}_\infty u \,\qquad\text{on}\quad [0,\tau].
$$
The identity operator $I_U:U\to U$ is called an admissible feedback for $\Sigma$ if the operator $I-\mathbb{F}_{t_0}:L^p([0,t_0],U)\to L^p([0,t_0],U)$ admits a (uniformly) bounded inverse for some $t_0>0$ (hence all $t_0>0$).
\end{dfn}
A consequence of Definition \ref{Feedback-admisible} is that the feedback law $u=y(\cdot;x_0,u)$ has a sense. In fact, due to \eqref{y-localy} this is equivalent to $(I-\F_{\t})u=\Psi_\t x_0$ on $[0,\t]$. As $I-\F_{\t}$ is invertible in $L^p([0,\t],U),$ then the equation $u=y(\cdot;x_0,u)$ has a unique solution and this solution $u\in L^p([0,\t],U)$ is given also by
\begin{align*}
u(t)=C_\Lambda x(t),\qquad a.e.\; t\ge 0,
\end{align*}
due to \eqref{representation-y}. Using \eqref{solVarConstForm}, the state $x(\cdot)$ satisfies the following variation of constants formula
\begin{align*}
x(t)=\T(t)x_0+\int^t_0 \T_{-1}(t-s)BC_\Lambda x(s)ds
\end{align*}
for any $x_0\in X$ and any $t\ge 0$. Now we set
\begin{align*}
\T^{cl}(t)x_0:=x(t),\qquad t\ge 0.
\end{align*}
Then by using the definition of $C_0$--semigroups one can see that $(\T^{cl}(t))_{t\ge 0}$ is a $C_0$--semigroup on $X$. More precisely, we have the following perturbation theorem due to Weiss \cite{WeiRegu} in Hilbert spaces and to Staffans \cite[Chap.7]{Staff} in Banach spaces.
\begin{thm}\label{staff_wei}
Let $(A,B,C)$ generates a regular linear system $\Sigma$ with the identity operator  $I_U:U\to U$ an admissible feedback operator. Then the operator
\begin{align} \label{A-cl}
\begin{split}
A^{cl} &:= A_{-1} + BC_\Lambda\\
D(A^{cl}) &:= \{x\in D(C_\Lambda); (A_{-1} + BC_\Lambda)x\in X\}
\end{split}
\end{align}
generates a $C_0$-semigroup $(\sgT^{cl}(t))_{t\geq 0}$ on $X$ such that $range(\sgT^{cl}(t))\subset D(C_\Lambda)$ for a.e. $t>0$, and  for any $\al>0,$ there exists $c_\al>0$ such that for all $x_0\in X,$
\begin{align}\label{T-cl-estim}
\|C_\Lambda \T^{cl}(\cdot)x_0\|_{L^p([0,\al],U)}\le c_\al \|x_0\|.
\end{align}
Moreover, this semigroup satisfies
\begin{equation}\label{WS-VCF}
\sgT^{cl}(t)x_0 = \sgT(t)x_0 + \int_0^t \sgT_{-1}(t-s)BC_{\Lambda}\sgT^{cl}(s)x_0 ds \qquad \qquad x_0\in X\ ,\ t \geq 0.
\end{equation}
In addition  $(A^{cl},B,C_\Lambda)$ generates a regular system $\Sigma^{cl}$.
\end{thm}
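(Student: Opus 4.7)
\textbf{Step 1: Construction of $\sgT^{cl}$.} The plan is to build the closed-loop semigroup by solving the feedback law $u=y(\cdot\,;x_0,u)$ for each $x_0\in X$ and then to identify its generator with $A^{cl}$ via a Laplace-transform computation. By \eqref{y-localy}, the feedback law rewrites as $(I-\F_{t_0})u=\Psi_{t_0}x_0$ in $L^p([0,t_0],U)$. Admissibility of $I_U$ gives $(I-\F_{t_0})^{-1}\in\calL(L^p([0,t_0],U))$, so there is a unique $u_{x_0}\in L^p([0,t_0],U)$ with $\|u_{x_0}\|\le c\|x_0\|$. I set $\sgT^{cl}(t)x_0:=\sgT(t)x_0+\Phi_t u_{x_0}$ on $[0,t_0]$; causality of $\F_\infty$ and the time invariance of $\sgT,\Phi,\Psi$ yield the semigroup law on $[0,t_0]$ and, by iteration, on all of $\R^+$. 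Strong continuity is inherited from that of $\sgT(\cdot)x_0$ and $\Phi_{(\cdot)}u_{x_0}$ in $X$.

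\textbf{Step 2: The estimate \eqref{T-cl-estim} and the formula \eqref{WS-VCF}.} Since $\Sigma$ is regular, \eqref{representation-y} gives $u_{x_0}(t)=C_\Lambda\sgT^{cl}(t)x_0$ a.e., so the bound on $u_{x_0}$ produces \eqref{T-cl-estim} on $[0,t_0]$ and then on every $[0,\al]$ by the semigroup property. Substituting $u_{x_0}=C_\Lambda\sgT^{cl}(\cdot)x_0$ back into the definition of $\sgT^{cl}$ yields \eqref{WS-VCF}.

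\textbf{Step 3: Generator and regularity of the closed-loop system.} Let $\tilde A$ denote the generator of $\sgT^{cl}$. For $\mathrm{Re}\,\la$ large, Laplace-transforming \eqref{WS-VCF} and using $\widehat{\sgT_{-1}(\cdot)Bv}(\la)=R(\la,A_{-1})Bv$ gives
$$R(\la,\tilde A)x_0=R(\la,A)x_0+R(\la,A_{-1})B\,h(\la),\qquad h(\la):=\int_0^\infty e^{-\la t}C_\Lambda\sgT^{cl}(t)x_0\,dt.$$
One then checks that $R(\la,\tilde A)x_0\in D(C_\Lambda)$ with $C_\Lambda R(\la,\tilde A)x_0=h(\la)$; applying $\la-A_{-1}$ in $X_{-1}$ gives $(\la-A_{-1}-BC_\Lambda)R(\la,\tilde A)x_0=x_0$, identifying $\tilde A=A^{cl}$ with domain as in \eqref{A-cl}. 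For the regularity claim, rerunning the argument with an external input $v$ added in parallel with the internal feedback shows that the closed-loop input-output operator $\F^{cl}_\infty$ satisfies $\F^{cl}_\infty=\F_\infty(I-\F_\infty)^{-1}$; well-posedness and the Weiss feedthrough condition at $0$ thus pass from $\F_\infty$ to $\F^{cl}_\infty$.

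\textbf{Main obstacle.} The critical technical step is the identification $C_\Lambda R(\la,\tilde A)x_0=h(\la)$: one must establish a priori that $R(\la,\tilde A)x_0\in D(C_\Lambda)$ and that the Yosida limit $sCR(s,A)$ commutes with the convolution defining $\sgT^{cl}(\cdot)x_0$. This is precisely the point where the regularity hypothesis on $\Sigma$ enters in an essential way — via the existence of the transfer-function limit $\lim_{s\to+\infty}C_\Lambda R(s,A_{-1})B$ as a bounded operator on $U$ — and without it the resolvent relation above would only yield the weaker inclusion $(A_{-1}+BC_\Lambda)R(\la,\tilde A)x_0\in X_{-1}$ rather than the required $X$-valued identity.
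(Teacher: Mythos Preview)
The paper does not prove this theorem. It is stated as a known result, attributed to Weiss \cite{WeiRegu} in the Hilbert-space setting and to Staffans \cite[Chap.~7]{Staff} in Banach spaces, and is used as a black box in the subsequent sections. Your sketch follows the standard construction from those references: solve the feedback equation $(I-\F_\tau)u=\Psi_\tau x_0$ using admissibility of $I_U$, define $\sgT^{cl}(t)x_0$ as the resulting state trajectory, read off \eqref{T-cl-estim} and \eqref{WS-VCF} from the regularity representation \eqref{representation-y}, and identify the generator by a resolvent computation. This is the correct outline and is essentially what one finds in \cite{WeiRegu} and \cite{Staff}.

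One remark on your Step~3: the identification $\tilde A=A^{cl}$ requires showing both inclusions $D(\tilde A)\subset D(A^{cl})$ and $D(A^{cl})\subset D(\tilde A)$. Your Laplace argument gives the first (that $R(\la,\tilde A)X\subset D(A^{cl})$ and $(\la-A^{cl})R(\la,\tilde A)=I$), but for the reverse you also need that $\la-A^{cl}$ is injective, which is typically obtained from the invertibility of $I-H(\la)$ for $\mathrm{Re}\,\la$ large (a consequence of \eqref{limit-trans-func}). This is implicit in your ``main obstacle'' paragraph but not stated.
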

%
\begin{dfn}\label{Staffans-Weiss-perturbation-oper}
Let $(A,B,C)$ generates a regular linear system on $X,U,U$ with the identity operator  $I_U:U\to U$ as an admissible feedback. The operator $$\P^{sw}:=BC:D(C_\Lambda)\subset X\to X_{-1}$$ is called the {\bf Staffans-Weiss perturbation} of $A$.
\end{dfn}

It is not difficult to see that if one of the operators $B$ or $C$ is bounded (i.e. $B\in\calL(U,X)$ or $C\in\calL(X,U)$) and the other is $p$-admissible then the triple $(A,B,C)$ generates a regular linear system on $X,U,U$ with the identity operator $I_U:U\to U$ as an admissible feedback. As application of the Staffans-Weiss theorem (Theorem \ref{staff_wei}), we distinct two subclasses of perturbations as follows:
\begin{rem}\label{Hadd_thm}
\begin{itemize}
  \item [{\rm (i)}] We take $B\in \calL(X,U)$ and $C\in\calL(D(A),U)$ a $p$-admissible observation operator for $A$. According to Theorem \ref{staff_wei}, the operator $A^{cl}:=A+BC$ with domain $D(A^{cl})=D(A)$ is a generator of a strongly continuous semigroup $\T^{cl}:=(\T^{cl}(t))_{t\ge 0}$ on $X$ such that $\T^{cl}(t)X\subset D(C_\Lambda)$ for a.e. $t>0,$ the estimate \eqref{T-cl-estim} holds, and
      \begin{align}\label{MV-VCF1}
      \T^{cl}(t)x=\T(t)x+\int^t_0 \T(t-s)BC_\Lambda \T^{cl}(s)xds,\qquad t\ge 0,\; x\in X.
      \end{align}
      On the other hand, it is shown in \cite{Hadd}, that the semigroup $\T^{cl}$ satisfy also the following formula
      \begin{align}\label{MV-VCF2}
      \T^{cl}(t)x=\T(t)x+\int^t_0 \T^{cl}(t-s)BC_\Lambda \T(s)xds,\qquad t\ge 0,\; x\in X.
      \end{align}
      Using H\"{o}lder inequality on can see that there exists $\al_0>0$ and $\ga\in (0,1)$ such that
      \begin{align*}
      \int^{\al_0}_0 \|BC\T(t)x\|\le \ga \|x\|
      \end{align*}
      for all $x\in D(A)$. The following operator
      $$\P^{mv}:=BC: D(A)\to X $$
      is a {\bf Miyadera-Voigt perturbation} for $A;$ (see e.g. \cite[p.195]{EngNag}).
  \item [{\rm (ii)}] We take $C\in\calL(X,U)$ and $B\in\calL(U,X_{-1})$ a $p$-admissible control operator for $A$. Then the part of the operator $A_{-1}+BC$ in $X$ generates a strongly continuous semigroup on $X$ satisfying all properties of Theorem \ref{staff_wei}. In this case the operator $$\P^{ds}:=BC:X\to X_{-1}$$ is called {\bf Desch-Schappacher perturbation} for $A$ (see e.g. \cite[p.182]{EngNag}).
\end{itemize}
\end{rem}

\section{Well-posedness of perturbed boundary value problems}\label{sec:3}
The object of this section is to investigate the well-posedness of the perturbed boundary value problem defined by \eqref{PM}. We first rewrite \eqref{PM} as non-homogeneous perturbed Cauchy problem of the form \eqref{calA-max}. Then the  well-posedness of \eqref{PM} can be obtained if for example the operator
\begin{align}\label{cald-def} \calA:=A_m,\qquad D(\calA)=\{x\in Z: Gx=Kx\}\end{align}  generates a strongly continuous semigroup on $X$ and that $P$ is a $p$-admissible observation operator for $A$ (see Remark \ref{Hadd_thm} (i)). Recently, the authors of \cite{HaddManzoRhandi} introduced conditions on $A_m,G$ and $K$ for which $\calA$ is a generator. To be more precise, assume that
\begin{itemize}
\item[] \textbf{(H1)} $G:Z\to U$ is onto, and
\item[] \textbf{(H2)} the operator defined by $A:=A_{m|\ker(G)}$ and $D(A):=\ker(G)$, generates a $C_0$-semigroup $(\sgT(t))_{t\geq 0}$.
\end{itemize}
According to  Greiner \cite{Grei}, these conditions imply that for any $\la\in\rho(A)$ the restriction of $G$ to $\ker(\la-A_m)$ is invertible. We then define
\begin{align*}
\D_\la:=(G _{|Ker(\lambda-A_m}))^{-1}\in\calL(U,X),\qquad \la\in\rho(A).
\end{align*}
This operator is called the {\em Dirichlet operator}.  Define the operators :
\begin{align}\label{B_op}
\begin{split}
  B:= & (\lambda - A_{-1})\mathbb{D}_\lambda \in \mathcal{L}(U,X_{-1}), \\
   C:=  & Ki \in \mathcal{L}(D(A),U),
\end{split}
\end{align}
where $i$ is the canonical injection from $D(A)$ to $Z$. In the rest of this paper, $C_\Lambda$ denotes the Yosida extension of $C$ with respect to $A$.
It is shown in \cite[lem.3.6]{HaddManzoRhandi} that if $A,B,C$ as above and if $(A,B,C)$ generates a regular linear system $\Si$ on $X,U,U,$ then we have
\begin{align}\label{relation-K-C-Lambda}
Z\subset D(C_\Lambda)\quad\text{and}\quad (C_\Lambda)_{|Z}=K.
\end{align}
If $H$ is the transfer function of $\Si$ and $\al>\om_0(A)$ then
\begin{align}\label{transfer-function-ABC}
H(\la)=C_\Lambda R(\la,A_{-1})B=C_\Lambda \D_\la=K\D_\lambda,
\end{align}
for any $\la\in\C$ with ${\rm Re}\la >\al$. Moreover, we have
\begin{align}\label{limit-trans-func}
\lim_{s\to+\infty}H(s)=0.
\end{align}
We have the following perturbation theorem (see \cite{HaddManzoRhandi} for the proof).
\begin{thm}\label{theorem:Hadd_Manzo_Rhandi}
Let assumptions {\rm \textbf{(H1)}} and {\rm \textbf{(H2)}} be satisfied and let $B$ and $C$ be the operators defined in \eqref{B_op}. Assume that $(A,B,C)$ generates a regular linear system on $X,U,U$ with the identity operator $I_U:U\to U$ as an admissible feedback. The following assertions hold:
\begin{itemize}
  \item [{\rm (i)}]The operator $(\calA,D(\calA))$ defined by \eqref{cald-def} coincides with the following operator
  \begin{align*}
  A^{cl}:=A_{-1}+BC_\Lambda,\quad D(A^{cl})=\{x\in D(C_\Lambda):(A_{-1}+BC_\Lambda)x\in X\}.
  \end{align*}
  \item [{\rm (ii)}] The operator $(\calA,D(\calA))$ generates a strongly continuous semigroup $(\sgT^{cl}(t))_{t\geq 0}$ on $X$ as in Theorem \ref{staff_wei}.
  \item [{\rm (iii)}] For any $\lambda\in\rho(A)$ we have
\begin{equation*}
\lambda\in\rho(\calA) \Leftrightarrow 1 \in \rho(\mathbb{D}_\lambda K) \Leftrightarrow 1 \in \rho(K\mathbb{D}_\lambda).
\end{equation*}
\item [{\rm (iv)}] Finally for $\lambda \in \rho(A)\cap \rho(\calA)$:
\begin{equation*}
R(\lambda,\calA) = (I-\mathbb{D}_\lambda K)^{-1}R(\lambda,A).
\end{equation*}
\end{itemize}
\end{thm}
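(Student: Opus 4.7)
The plan is to prove the four assertions in order, relying throughout on the decomposition of elements of $Z$ afforded by the Dirichlet operator $\mathbb{D}_\lambda$. For $x\in Z$ and $\lambda\in\rho(A)$, set $x_0:=R(\lambda,A)(\lambda-A_m)x$; since $G$ kills $R(\lambda,A)$-images and $(\lambda-A_m)\mathbb{D}_\lambda=0$, one has $x_0\in D(A)$ together with the splitting $x=x_0+\mathbb{D}_\lambda Gx$. Applying $A_{-1}$ and using $(\lambda-A_{-1})\mathbb{D}_\lambda=B$ converts this into the key identity
\begin{equation*}
A_m x = A_{-1}x + B\,Gx, \qquad x\in Z,
\end{equation*}
which is what glues the boundary formulation to the distributed one.

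For (i), the inclusion $D(\calA)\subset D(A^{cl})$ is immediate: if $x\in D(\calA)$ then $x\in Z\subset D(C_\Lambda)$ by \eqref{relation-K-C-Lambda}, $C_\Lambda x=Kx=Gx$, and by the key identity $(A_{-1}+BC_\Lambda)x=A_{-1}x+BGx=A_m x\in X$. For the reverse inclusion, take $x\in D(A^{cl})$ and set $w:=(\lambda-A_{-1}-BC_\Lambda)x\in X$; applying $R(\lambda,A_{-1})$ yields
\begin{equation*}
x=R(\lambda,A)w+\mathbb{D}_\lambda C_\Lambda x \;\in\; D(A)+\mathbb{D}_\lambda U \;\subset\; Z,
\end{equation*}
after which $Gx=G\mathbb{D}_\lambda C_\Lambda x=C_\Lambda x=Kx$ (using $C_\Lambda=K$ on $Z$), so $x\in D(\calA)$. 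Part (ii) is then a direct translation: the triple $(A,B,C)$ satisfies the hypotheses of \thmref{staff_wei}, which produces the semigroup for $A^{cl}$, and (i) identifies $\calA$ with $A^{cl}$.

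For (iii)--(iv), given $y\in X$ the equation $(\lambda-\calA)x=y$ with $x\in D(\calA)$ translates, via the same decomposition with $(\lambda-A_m)x=y$ and $Gx=Kx$, into
\begin{equation*}
(I-\mathbb{D}_\lambda K)x=R(\lambda,A)y \quad\text{in }Z.
\end{equation*}
Hence unique solvability for all $y\in X$ is equivalent to $1\in\rho(\mathbb{D}_\lambda K)$ on $Z$, and in that case $R(\lambda,\calA)=(I-\mathbb{D}_\lambda K)^{-1}R(\lambda,A)$. The equivalence $1\in\rho(\mathbb{D}_\lambda K)\Leftrightarrow 1\in\rho(K\mathbb{D}_\lambda)$ is the standard spectral permanence for $ST$ versus $TS$, made explicit by the Sherman--Morrison-type identity $(I-\mathbb{D}_\lambda K)^{-1}=I+\mathbb{D}_\lambda(I-K\mathbb{D}_\lambda)^{-1}K$.

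The one delicate point is the reverse inclusion in (i): promoting mere membership $x\in D(A^{cl})\subset D(C_\Lambda)\subset X$ all the way up to $x\in Z$. The trick is that while $A_{-1}x$ and $BC_\Lambda x$ each only live in $X_{-1}$, their combination $(\lambda-A_{-1}-BC_\Lambda)x$ lies in $X$ by definition of $D(A^{cl})$, so applying $R(\lambda,A_{-1})$ splits $x$ cleanly into a $D(A)$-summand (from the $X$-part) and a $\mathbb{D}_\lambda U$-summand (from the $B$-part), both manifestly inside $Z$.
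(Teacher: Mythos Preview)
The paper does not actually prove this theorem: immediately before the statement it writes ``We have the following perturbation theorem (see \cite{HaddManzoRhandi} for the proof)'' and gives no argument of its own. Your proof is correct and follows precisely the standard route one finds in that reference, namely the Greiner decomposition $x=R(\lambda,A)(\lambda-A_m)x+\mathbb{D}_\lambda Gx$ on $Z$, the resulting identity $A_m=A_{-1}+BG$ on $Z$, the identification $C_\Lambda=K$ on $Z$ from \eqref{relation-K-C-Lambda}, and for the reverse inclusion in (i) the resolvent trick $x=R(\lambda,A)w+\mathbb{D}_\lambda C_\Lambda x$ to pull $x$ back into $Z$.

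One small point in (iii): your sentence ``unique solvability for all $y\in X$ is equivalent to $1\in\rho(\mathbb{D}_\lambda K)$ on $Z$'' is slightly loose in the direction $\lambda\in\rho(\calA)\Rightarrow 1\in\rho(\mathbb{D}_\lambda K)$. The equation $(I-\mathbb{D}_\lambda K)x=R(\lambda,A)y$ with $y$ ranging over $X$ only shows that $I-\mathbb{D}_\lambda K$ is injective on $Z$ with range containing $D(A)$, not all of $Z$. The clean way to close this is to pass to $U$: injectivity of $I-\mathbb{D}_\lambda K$ on $Z$ forces injectivity of $I-K\mathbb{D}_\lambda$ on $U$ (apply $\mathbb{D}_\lambda$), and surjectivity of $I-K\mathbb{D}_\lambda$ follows because for $v\in U$ one checks directly that $u:=v+KR(\lambda,\calA)(\lambda-A_{-1})\mathbb{D}_\lambda v$ solves $(I-K\mathbb{D}_\lambda)u=v$; then the Sherman--Morrison identity you already wrote gives $1\in\rho(\mathbb{D}_\lambda K)$. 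This is a one-line fix and does not affect the overall approach.
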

Under the assumptions of Theorem \ref{theorem:Hadd_Manzo_Rhandi}, the mild solution of the problem \eqref{calA-only} is given by
\begin{align}\label{formula-used}
z(t)=\T^{cl}(t)x+\int^t_0 \T^{cl}(t-s)f(s)ds,
\end{align}
for any $t\ge 0,\;x\in X$ and $f\in L^p(\R^+,X)$. Before giving  another useful expression of $z$ in term of the semigroup $\T$, we need the following very useful result proved in \cite[prop.3.3]{Hadd}.
\begin{lem}\label{Hadd_lemma}
let $(\Ss(t))_{t\ge 0}$ be a strongly continuous semigroup on $X$ with generator $(\mathbb{G},D(\mathbb{G}))$. Let $\Upsilon\in\calL(D(\mathbb{G}),X)$ be a $p$-admissible observation operator for $\mathbb{G}$. Denote by $\Upsilon_\Lambda$ the Yosida extension of $\Upsilon$ with respect to $\mathbb{G}$. Then
\begin{align*}
&(\Ss\ast f)(t):=\int^t_0 \Ss(t-s)f(s)ds\in D(\Upsilon_\Lambda),\quad \text{a.e.}\;t\ge 0,\cr
&\left\|\Upsilon_\Lambda (\Ss\ast f)\right\|_{L^p([0,\alpha],X)} \leq c(\alpha)\|f \|_{L^p([0,\alpha],X)},
\end{align*}
for $\alpha >0$, $f\in L_{loc}^p([0,\infty),X)$ and a constant $c(\alpha)$ independent of $f$ such that $c(\alpha)\rightarrow 0$ as $\alpha\rightarrow 0$.
\end{lem}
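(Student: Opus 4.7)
My plan rests on three ingredients: recasting $p$-admissibility as a bounded linear map, verifying the estimate on a dense class of smooth inputs via Minkowski's integral inequality, and transferring both the estimate and the $D(\Upsilon_\Lambda)$-membership to the general case by density combined with a Yosida-type identification.

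First I would rewrite the admissibility inequality \eqref{walid} as the statement that $\Psi_\alpha: X \to L^p([0,\alpha], X)$, defined by $(\Psi_\alpha x)(t) := \Upsilon_\Lambda \Ss(t) x$, is bounded with norm $\kappa_\alpha$ that is nondecreasing in $\alpha$ (by truncation to a subinterval) and such that $\kappa_\alpha \alpha^{1/q} \to 0$ as $\alpha \to 0^+$. For $p>1$ this decay is immediate from the boundedness of $\kappa_\alpha$ near zero, while for $p=1$ one uses absolute continuity of the integral together with the uniform bound from admissibility.

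Next, for the core computation, I would take $f$ in the dense subspace of $C^1([0,\alpha], D(\mathbb{G}))$ with $f(0) = 0$. For such $f$, the integrand $\tau \mapsto \Ss(t-\tau) f(\tau)$ is continuous in the graph norm of $\mathbb{G}$, so $(\Ss \ast f)(t) \in D(\mathbb{G}) \subset D(\Upsilon_\Lambda)$ and Fubini gives
\begin{equation*}
\Upsilon_\Lambda(\Ss \ast f)(t) = \int_0^t \Upsilon \Ss(t-\tau) f(\tau)\, d\tau = \int_0^t \bigl(\Psi_\alpha f(\tau)\bigr)(t-\tau)\, d\tau.
\end{equation*}
Minkowski's integral inequality followed by H\"older then yields
\begin{equation*}
\|\Upsilon_\Lambda(\Ss\ast f)\|_{L^p([0,\alpha], X)} \le \int_0^\alpha \|\Psi_\alpha f(\tau)\|_{L^p([0,\alpha], X)}\, d\tau \le \kappa_\alpha \|f\|_{L^1([0,\alpha], X)} \le \kappa_\alpha \alpha^{1/q} \|f\|_{L^p([0,\alpha], X)},
\end{equation*}
so $c(\alpha) := \kappa_\alpha \alpha^{1/q}$ carries the claimed decay.

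Finally, for general $f \in L^p([0,\alpha], X)$ I would pick smooth $D(\mathbb{G})$-valued approximants $f_n \to f$ in $L^p$. By the bound above, $\Upsilon_\Lambda(\Ss\ast f_n)$ is Cauchy in $L^p$ and converges to some $g$, while $(\Ss \ast f_n)(t) \to (\Ss\ast f)(t)$ in $X$ for a.e.\ $t$ after passing to a subsequence. To identify $g(t)$ with $\Upsilon_\Lambda(\Ss\ast f)(t)$, I would apply $sR(s,\mathbb{G})$---which commutes with $\Ss$ and sends $X$ into $D(\mathbb{G})$---to each $(\Ss\ast f_n)(t)$ and exchange the limits $n \to \infty$ and $s \to \infty$. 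The main obstacle I anticipate is exactly this double limit: since $\Upsilon_\Lambda$ is defined pointwise through a resolvent limit rather than as a closure in the usual graph-theoretic sense, justifying $(\Ss\ast f)(t) \in D(\Upsilon_\Lambda)$ almost everywhere requires a diagonal argument in $(n,s)$ that is legitimized precisely by the uniform $L^p$-bound from the preceding step.
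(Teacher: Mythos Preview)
The paper does not prove this lemma at all; it is quoted verbatim from \cite[Prop.~3.3]{Hadd} and used as a black box. So there is no in-paper proof to compare your attempt against.

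On its own merits, your strategy is the standard and correct one. The Minkowski step for $D(\mathbb{G})$-valued $f$ is clean, and the constant $c(\alpha)=\kappa_\alpha\,\alpha^{1/q}$ does vanish for $p>1$ since $\kappa_\alpha$ is nondecreasing in $\alpha$ and hence bounded near $0$. (Your remark on $p=1$ is too vague; the paper only ever applies the lemma with $p\in(1,\infty)$, so you may simply restrict to that range.) One simplification: you do not need $f\in C^1$ with $f(0)=0$. Any $f\in L^p([0,\alpha],D(\mathbb{G}))$ already gives $(\Ss\ast f)(t)\in D(\mathbb{G})$ via $\mathbb{G}(\Ss\ast f)=\Ss\ast(\mathbb{G}f)$, and the commutation of $\Upsilon$ with the Bochner integral goes through.

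The point you correctly flag as the obstacle---showing $(\Ss\ast f)(t)\in D(\Upsilon_\Lambda)$ for general $f$---admits a cleaner resolution than a diagonal argument. Set $f_\lambda:=\lambda R(\lambda,\mathbb{G})f$; then $f_\lambda\in L^p([0,\alpha],D(\mathbb{G}))$ so your Minkowski bound applies to it, while at the same time
\[
\Upsilon(\Ss\ast f_\lambda)(t)=\lambda\,\Upsilon R(\lambda,\mathbb{G})(\Ss\ast f)(t)
\]
because $R(\lambda,\mathbb{G})$ commutes with $\Ss$. Since $f_\lambda\to f$ in $L^p$ and the map $h\mapsto\Upsilon(\Ss\ast h)$ is bounded on $L^p([0,\alpha],D(\mathbb{G}))$ with constant $c(\alpha)$ depending only on the $L^p([0,\alpha],X)$-norm, the family $\{\lambda\,\Upsilon R(\lambda,\mathbb{G})(\Ss\ast f)\}_{\lambda}$ is Cauchy in $L^p([0,\alpha],X)$ as $\lambda\to\infty$. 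Passing to an a.e.\ convergent subsequence identifies the limit; the full limit then exists because the Cauchy estimate holds for the whole net, not just a subsequence. This is essentially what the proof in \cite{Hadd} does.
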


\begin{prop}\label{VCF-calA-f}
Let assumptions of Theorem \ref{theorem:Hadd_Manzo_Rhandi} be satisfied. Let $\al>0$ and $f\in L^p(\R^+,X)$.  The non-homogenous Cauchy problem \eqref{calA-only} is well-posed and its mild solution satisfies for any initial condition $x\in X,$
\begin{align}\label{3-calA-f-z}
\begin{split}
&z(t)\in D(C_\Lambda)\quad\text{a.e.}\; t>0,\cr & \|C_\Lambda z(\cdot)\|_{L^p([0,\al],X)}\le c_\al  \left(\|x\|+\|f\|_{L^p([0,\al],X)}\right),\cr & z(t)=\T(t)x+\int^t_0 \T_{-1}(t-s)BC_\Lambda z(s)ds+\int^t_0 \T(t-s)f(s)ds,\qquad t\ge 0,
\end{split}
\end{align}
where $c_\al>0$ is a constant independent of $f$.
\end{prop}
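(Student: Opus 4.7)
The plan is to split the mild solution via \eqref{formula-used} as $z(t) = z_1(t) + z_2(t)$, where $z_1(t) := \T^{cl}(t)x$ and $z_2(t) := \int_0^t \T^{cl}(t-s)f(s)\,ds$, and treat each piece with a different tool. The homogeneous part $z_1$ is handled directly by Theorem~\ref{staff_wei}: it provides $z_1(t) \in D(C_\Lambda)$ for a.e.\ $t>0$, the estimate $\|C_\Lambda z_1(\cdot)\|_{L^p([0,\al],U)} \le c_\al\|x\|$, and the variation-of-constants identity \eqref{WS-VCF}, which is exactly the $\T(t)x + \int_0^t \T_{-1}(t-s)BC_\Lambda z_1(s)\,ds$ part of \eqref{3-calA-f-z}.

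For the convolution part $z_2$, I would use the final assertion of Theorem~\ref{staff_wei}, namely that $(A^{cl},B,C_\Lambda)$ generates a regular linear system on $X,U,U$. In particular $C_\Lambda$ restricted to $D(A^{cl})=D(\calA)$ is a $p$-admissible observation operator for the semigroup $\T^{cl}$. Lemma~\ref{Hadd_lemma} applied with $\Ss = \T^{cl}$, $\mathbb{G} = A^{cl}$ and $\Upsilon = C_\Lambda|_{D(A^{cl})}$ then yields $z_2(t) \in D(C_\Lambda)$ for a.e.\ $t>0$ together with $\|C_\Lambda z_2(\cdot)\|_{L^p([0,\al],U)} \le c(\al)\|f\|_{L^p([0,\al],X)}$. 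The identification of the Yosida extension of $C_\Lambda|_{D(A^{cl})}$ with respect to $A^{cl}$ with the original $C_\Lambda$ (Yosida w.r.t.\ $A$) is forced by the regularity of $(A^{cl},B,C_\Lambda)$, since in both frames the admissible output map coincides with $x\mapsto C_\Lambda \T^{cl}(\cdot)x$.

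To extract the integral identity for $z_2$, I would insert \eqref{WS-VCF} with initial datum $f(r)$ into $\T^{cl}(t-r)f(r)$ and integrate in $r\in[0,t]$. The resulting double integral in $(r,\sigma)$ is handled by Fubini after the substitution $u=r+\sigma$; the closedness of $B\in\calL(U,X_{-1})$ and of $C_\Lambda$ then allow these operators to be pulled outside the inner integral, producing
\begin{align*}
z_2(t) = \int_0^t \T(t-r)f(r)\,dr + \int_0^t \T_{-1}(t-u)\,B\,C_\Lambda z_2(u)\,du.
\end{align*}
Adding this to the VoC formula for $z_1$ yields the identity in \eqref{3-calA-f-z}, and summing the two $L^p$-estimates gives the announced bound.

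The main obstacle will be the pointwise identification of $z_2(t)$ as an element of $D(C_\Lambda)$ (the Yosida extension with respect to $A$), as opposed to the a priori larger domain produced by Lemma~\ref{Hadd_lemma} (the Yosida extension of $C_\Lambda|_{D(A^{cl})}$ with respect to $A^{cl}$). Once this compatibility is in place, the term $\int_0^t \T_{-1}(t-s)BC_\Lambda z(s)\,ds$ of \eqref{3-calA-f-z} is well-defined and the rest of the argument reduces to a routine Fubini-plus-closedness computation.
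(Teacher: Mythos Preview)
Your decomposition $z=z_1+z_2$ and the use of Theorem~\ref{staff_wei} for $z_1$ together with Lemma~\ref{Hadd_lemma} for $z_2$ is exactly the skeleton of the paper's proof. Two points, however, are not handled correctly.

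First, the compatibility of the two Yosida extensions. You correctly flag this as the obstacle, but your resolution---``in both frames the admissible output map coincides with $x\mapsto C_\Lambda \T^{cl}(\cdot)x$''---is not a proof: equality of output maps for a.e.\ $t$ does not by itself give the pointwise inclusion $D(C_{\Lambda,\calA})\subset D(C_\Lambda)$ needed to assert $z_2(t)\in D(C_\Lambda)$. The paper settles this by taking Laplace transforms of \eqref{WS-VCF} and applying $sC_\Lambda$, obtaining
\[
sC_\Lambda R(s,\calA)x = sCR(s,A)x + H(s)\,sC_\Lambda R(s,\calA)x,
\]
and then using $\lim_{s\to\infty}H(s)=0$ (from regularity, \eqref{limit-trans-func}) to conclude that if the left side converges (i.e.\ $x\in D(C_{\Lambda,\calA})$) then so does $sCR(s,A)x$, with the same limit. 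This is short but not automatic.

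Second, and this is a genuine gap: your Fubini step invokes ``the closedness of $C_\Lambda$'' to pull it through the inner integral and obtain $\int_0^u C_\Lambda \T^{cl}(u-r)f(r)\,dr = C_\Lambda z_2(u)$. The Yosida extension $C_\Lambda$ is \emph{not} closed in general, so Hille's theorem is unavailable. The paper circumvents this by density: approximate $f$ by $f_n\in \calC([0,\al],D(\calA))$, and for such $f_n$ the interchange is legal because one can write $C_\Lambda = \big(C_\Lambda(-\calA)^{-1}\big)\cdot(-\calA)$ with $C_\Lambda(-\calA)^{-1}\in\calL(X,U)$ bounded and $\calA$ commuting with the convolution (since $f_n$ takes values in $D(\calA)$). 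One then verifies the identity in \eqref{3-calA-f-z} for the approximants $z_n$ and passes to the limit using the $L^p$-estimate already established together with the admissibility of $B$. So the step you call ``routine'' actually requires an approximation argument rather than closedness.
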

\begin{proof}
Let, by Theorem  \ref{theorem:Hadd_Manzo_Rhandi}, $\T^{cl}$ the semigroup generated by $\calA$ and let $z:[0,+\infty)\to X$ be the mild solution of the problem \eqref{calA-only} given by \eqref{formula-used}. According to Theorem \ref{staff_wei}, we know that $C_\Lambda$ is an admissible observation operator for $\calA$. We denote by $C_{\Lambda,\calA}$ the Yosida extension of $C_\Lambda$ with respect to $\calA$. Then $D(C_{\Lambda,\calA}) \subset D(C_\Lambda)$ and $C_{\Lambda,\calA}=C_\Lambda$ on $D(C_{\Lambda,\calA})$. In fact, let $x\in D(C_{\Lambda,\calA})$ and $s>0$ sufficiently large. Then by first taking Laplace transform on both sides of \eqref{WS-VCF} and second applying $sC_\Lambda,$ we obtain
\begin{align}\label{CR-formula}
sC_\Lambda R(s,\calA)x=sCR(s,A)+H(s)C_\Lambda s R(s,\calA)x,
\end{align}
where we have used \eqref{transfer-function-ABC}. Remark that
\begin{align*}
\|H(s)C_\Lambda s R(s,\calA)x\|\le \|H(s)\|\left(\|C_\Lambda s R(s,\calA)x-C_{\Lambda,\calA}x\|+ \|C_{\Lambda,\calA}x\|\right)
\end{align*}
Hence, by \eqref{limit-trans-func} and the fact that $x\in D(C_{\Lambda,\calA})$, we obtain
\begin{align*}
\lim_{s\to+\infty}H(s)C_\Lambda s R(s,\calA)x=0.
\end{align*}
Now from \eqref{CR-formula}, we deduce that $x\in D(C_\Lambda)$ and $C_{\Lambda,\calA}x=C_\Lambda x$. Let $x\in X,\;\al>0$ and $f\in L^p([0,\al],X)$. The fact that $C_\Lambda$ is $p$--admissible for $\calA,$ then by using \eqref{formula-used} and Lemma \ref{Hadd_lemma}, we obtain $z(t)\in D(C_{\Lambda,\calA})$ for  a.e. $t>0$.  This shows that $z(t)\in D(C_\Lambda)$ and $C_\Lambda z(t)=C_{\Lambda,\calA}z(t)$ for a.e. $t>0$. The estimation in \eqref{3-calA-f-z} follows immediately from \eqref{T-cl-estim} and Lemma \ref{Hadd_lemma}. Let us prove the last property in \eqref{3-calA-f-z}. By density there exists $(f_n)_n\subset \calC([0,\al],D(\calA))$ such that $f_n\to f$ in $L^p([0,\al],X)$ as $n\to \infty$. We set
\begin{align}\label{expr-z-n}
z_n(t)=\T^{cl}(t)x+\int^t_0 \T^{cl}(t-s)f_n(s)ds,\qquad t\ge 0.
\end{align}
Using H\"{o}lder inequality, it is clear that $\|z_n(t)-z(t)\|\to 0$ as $n\to\infty$. Now let us prove that $z_n$ satisfies the third assertion in \eqref{3-calA-f-z}. In fact, the estimate in \eqref{3-calA-f-z} implies that
\begin{align*}
\|C_\Lambda z_n(\cdot)-C_\Lambda z(\cdot)\|_{L^p([0,\al],X)}\le c_\al \|f_n-f\|_{L^p([0,\al],X)}\underset{n\to\infty}{\longrightarrow}0.
\end{align*}
On the other hand, using the expression of the semigroup $\T^{cl}$ given in \eqref{WS-VCF}, change of variable and Fubini theorem we obtain
\begin{align}\label{z_n}
z_n(t)&=\T^{cl}(t)x+\int^t_0\T(t-s)f_n(s)ds+\int^t_0 \T_{-1}(t-\t)B\int^{\t}_0 C_\Lambda \T^{cl}(\t-s)f_n(s)ds d\t \cr &=\T(t)x+ +\int^t_0\T(t-s)f_n(s)ds\cr & \hspace{1.5cm}+\int^t_0 \T_{-1}(t-\t)B\left(C_\Lambda \T^{cl}(\t)x+\int^{\t}_0 C_\Lambda \T^{cl}(\t-s)f_n(s) ds\right) d\t.
\end{align}
For simplicity we assume that $0\in\rho(\calA)$. We then have
\begin{align*}
C_\Lambda \int^{\t}_0 \T^{cl}(\t-s)f_n(s)ds&=C_\Lambda (-\calA)^{-1} (-\calA)\int^{\t}_0 \T^{cl}(\t-s)f_n(s)ds \cr &=C_\Lambda (-\calA)^{-1} \int^{\t}_0 \T^{cl}(\t-s)(-\calA)f_n(s)ds \cr &=\int^{\t}_0 C_\Lambda (-\calA)^{-1} \T^{cl}(\t-s)(-\calA)f_n(s)ds\cr &=\int^{\t}_0 C_\Lambda \T^{cl}(\t-s)f_n(s)ds.
\end{align*}
Now replacing this in \eqref{z_n}, and using \eqref{expr-z-n}, we have
\begin{align*}
z_n(t)=\T(t)x+\int^t_0 \T_{-1}(t-s)BC_\Lambda z(s)ds+\int^t_0 \T(t-s)f_n(s)ds,\qquad t\ge 0.
\end{align*}
Put
\begin{align*}
\varphi(t)=\T(t)x+\int^t_0 \T_{-1}(t-s)BC_\Lambda z(s)ds+\int^t_0 \T(t-s)f(s)ds,\qquad t\ge 0.
\end{align*}
Then for any $t\in [0,\al],$ we have
\begin{align*}
\|z_n(t)-\varphi(t)\|\le \ga_\al\left( \|C_\Lambda z_n(\cdot)-C_\Lambda z(\cdot)\|_{L^p([0,\al],X)}+ \|f_n-f\|_{L^p([0,\al],X)}\right),
\end{align*}
due to the admissibility of $B$ for $A$ and  H\"{o}lder inequality. This shows that $\|z_n(t)-\varphi(t)\|\to 0$ as $n\to \infty$, and hence $z=\varphi$.
\end{proof}
Now we can state the main result of this section.
\begin{thm}\label{theorem:generalisation_of_Hadd_Manzo_Rhandi}
Let assumptions of Theorem \ref{theorem:Hadd_Manzo_Rhandi} be satisfied. In addition, let $P:Z\to X$ such that  $(A,B,\P)$  generates a regular linear system on $X,U,X$, where $\P=P_{|D(A)}$. The following assertions hold:
\begin{itemize}
  \item [{\rm (i)}] The operator $P\in\calL(D(\calA),X)$ is a $p$-admissible observation operator for $\calA$, hence the operator $(\calA+P,D(\calA))$ generates a strongly continuous semigroup on $X$.
  \item [{\rm (ii)}] The boundary problem \eqref{PM} is well-posed and has a mild solution $z:[0,+\infty)\to X$ satisfying:
  \begin{align*}
 & z(t)\in D(P_\Lambda)\quad\text{a.e.}\; t\ge 0,\cr & z(t)=\T^{cl}(t)x+\int^t_0 \T^{cl}(t-s)\left(P_\Lambda z(s)+f(s)\right)ds
  \end{align*}
  for any $t\ge 0,$ initial condition $x\in X$ and $f\in L^p([0,\infty),X),$ where $P_\Lambda$ is the Yosida extension of $P$ w.r.t $\calA$.
\end{itemize}
\end{thm}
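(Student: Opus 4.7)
The plan is to first prove admissibility in (i) by applying the Yosida extension of $\P$ to the closed-loop variation-of-constants formula \eqref{WS-VCF}, and then derive the mild solution formula in (ii) by mimicking the Miyadera--Voigt computation that occupies the second half of the proof of Proposition \ref{VCF-calA-f}.

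For (i), let $\P_\Lambda$ denote the Yosida extension of $\P$ with respect to $A$. Since $(A,B,\P)$ is a regular system on $X,U,X$, the proof leading to \eqref{relation-K-C-Lambda} shows verbatim that $Z\subset D(\P_\Lambda)$ and $(\P_\Lambda)_{|Z}=P$. Fix $x_0\in D(\calA)\subset Z$. Applying $\P_\Lambda$ to both sides of \eqref{WS-VCF} is legitimate because $\T^{cl}(t)x_0\in D(\calA)\subset Z\subset D(\P_\Lambda)$ and because for every $u\in L^p$ the convolution $\int_0^t\T_{-1}(t-s)Bu(s)\,ds$ lies in $D(\P_\Lambda)$ by the input-output mechanism of the regular system $(A,B,\P)$; this yields
\begin{align*}
P\T^{cl}(t)x_0=\P_\Lambda\T(t)x_0+(\F_\infty u)(t),
\end{align*}
with $\F_\infty$ the extended input-output operator of $(A,B,\P)$ and $u(\cdot)=C_\Lambda\T^{cl}(\cdot)x_0$. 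By \eqref{T-cl-estim}, $\|u\|_{L^p([0,\alpha],U)}\le c_\alpha\|x_0\|$, so \eqref{F-infty-estim} controls the second summand in $L^p([0,\alpha],X)$, while $p$-admissibility of $\P$ for $A$ controls the first. This is exactly the $p$-admissibility of $P$ for $\calA$, and Remark \ref{Hadd_thm}(i) applied with bounded control $I_X$ and observation $P$ produces a $C_0$-semigroup $\T^{cl,P}$ generated by $(\calA+P,D(\calA))$.

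For (ii), the mild solution reads $z(t)=\T^{cl,P}(t)x+\int_0^t\T^{cl,P}(t-s)f(s)\,ds$. Denote by $P_{\Lambda,\calA+P}$ the Yosida extension of $P$ with respect to $\calA+P$; Lemma \ref{Hadd_lemma} then yields $z(t)\in D(P_{\Lambda,\calA+P})$ for a.e.\ $t$. The comparison $D(P_{\Lambda,\calA+P})\subset D(P_\Lambda)$ with agreement of the two extensions on their common domain proceeds exactly as in the opening paragraph of the proof of Proposition \ref{VCF-calA-f}: Laplace transform of the Miyadera--Voigt identity \eqref{MV-VCF1} delivers
\begin{align*}
sPR(s,\calA+P)x=sPR(s,\calA)x+PR(s,\calA)\cdot sPR(s,\calA+P)x,
\end{align*}
and the admissibility estimate from (i) ensures $\|PR(s,\calA)\|\to 0$ as $s\to+\infty$, so the left-hand side converges whenever $x\in D(P_{\Lambda,\calA+P})$. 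With $P_\Lambda z(\cdot)$ now well defined and in $L^p$, I would insert \eqref{MV-VCF1} into both $\T^{cl,P}$-terms of the mild solution and rearrange the resulting double integral by Fubini to reach
\begin{align*}
z(t)=\T^{cl}(t)x+\int_0^t\T^{cl}(t-s)(P_\Lambda z(s)+f(s))\,ds.
\end{align*}

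The main technical hurdle is justifying the passage of $P_\Lambda$ inside the iterated integral that arises when expanding $\int_0^t\T^{cl,P}(t-s)f(s)\,ds$ through \eqref{MV-VCF1}. Following the final part of the proof of Proposition \ref{VCF-calA-f}, I would first assume $0\in\rho(\calA)$ without loss of generality, verify the identity for smooth approximants $f_n\in\calC([0,\alpha],D(\calA+P))$ on which $P_\Lambda$ reduces to $P$ and the interchange is immediate, and then pass to the limit in $L^p$ using the admissibility estimates from (i) together with H\"{o}lder's inequality.
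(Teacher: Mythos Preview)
Your argument for (i) is essentially identical to the paper's: both apply the Yosida extension of $\P$ with respect to $A$ to the closed-loop formula \eqref{WS-VCF}, control the free term by $p$-admissibility of $\P$ for $A$, and control the convolution term by the input-output bound of the regular system $(A,B,\P)$ combined with \eqref{T-cl-estim}.

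For (ii) the paper simply cites \cite[Thm.~5.1]{Hadd} and stops, whereas you spell out the argument by transplanting the density-and-Fubini machinery from the proof of Proposition~\ref{VCF-calA-f} to the Miyadera--Voigt layer (with $\calA$ in the role of $A$ and $\calA+P$ in the role of $\calA$). Your route is correct and more self-contained; the only slip is that in the density step you should assume $0\in\rho(\calA+P)$ rather than $0\in\rho(\calA)$, since the semigroup appearing in the inner convolution is $\T^{cl,P}$. The commutation
\[
P_\Lambda\int_0^\tau\T^{cl,P}(\tau-s)f_n(s)\,ds=\int_0^\tau P_\Lambda\T^{cl,P}(\tau-s)f_n(s)\,ds
\]
then goes through via $P(-\calA-P)^{-1}\in\calL(X)$ exactly as the paper does with $C_\Lambda(-\calA)^{-1}$ in Proposition~\ref{VCF-calA-f}. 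With that correction your proposal reproduces the result.
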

\begin{proof}
(i) We first remark from \eqref{relation-K-C-Lambda} that $Z\subset D(\P_{0,\Lambda})$ and $P=\P_{0,\Lambda}$ on $Z,$ where  $\P_{0,\Lambda}$ denotes the Yosida extension of $\P$ w.r.t. $A$. Let $x\in D(\calA)$ and $\al>0$. The facts that $(A,B,\P)$ is regular and \eqref{T-cl-estim}, we have
\begin{align}\label{hhhhh}
&\int^t_0 \T_{-1}(t-s)BC_\Lambda \T^{cl}(s)x\in D(\P_{0,\Lambda})\quad\text{a.e.}\;t\ge 0,\;\text{and}\cr & \left\| \P_{0,\Lambda}\int^{\cdot}_0 \T_{-1}(t-s)BC_\Lambda \T^{cl}(s)x\right\|_{L^p([0,\al],X)} \le \beta_\al \|x\|,
\end{align}
where $\beta_\al>0$ is a constant. On the other hand, by \eqref{WS-VCF}, we have
\begin{align*}
P\T^{cl}(t)x&=\P_{0,\Lambda}\T^{cl}(t)x\cr &=\P_{0,\Lambda}\T(t)x+\P_{0,\Lambda}\int^{t}_0 \T_{-1}(t-s)BC_\Lambda \T^{cl}(s)x.
\end{align*}
Hence the $p$-admissibility of $P$ for $\calA$ follows by \eqref{hhhhh} and the $p$-admissibility of $\P$ for $A$. Thus, according to Remark \ref{Hadd_thm} (i), the operator $(\calA+P,D(\calA))$ generates a strongly continuous semigroup on $X$.  The assertion (ii) follows from \cite[thm.5.1]{Hadd}
\end{proof}


\section{Perturbation Theorems for maximal regularity}\label{sec:4}

\subsection{Maximal regularity}\label{sub:4.0}

Let $\G:D(\G)\subset X\to X$ be the generator of a strongly continuous semigroup $\Ss:=(\Ss(t))_{t\ge 0}$ on a Banach space $X$. Consider the following non--homogeneous abstract Cauchy problem:
\begin{equation}
  \label{ACP}
  \begin{cases}
  \dot{z}(t) = \G z(t)+f(t) , & \mbox{ } 0<t \leq T \\
  z(0) = 0, & \mbox{}
  \
  \end{cases}
\end{equation} where $f: [0,T]\rightarrow X$ a measurable function.
\begin{dfn}
We say that the operator $\G$ (or the problem \eqref{ACP}) has the maximal $L^p$-regularity on the interval $[0,T]$, and we write $\G\in \mathscr{MR}_p(0,T;X)$, if for all $f\in L^p([0,T],X)$, there exists a unique $z \in W^{1,p}([0,T],X)\cap L^p([0,T],D(\G))$ which verifies (\ref{ACP}).
\end{dfn}
By "maximal" we mean that the applications $f$, $\G z$ and $z$ have the same regularity. Due to  the closed graph theorem,  if  $\G\in \mathscr{MR}_p(0,T;X)$ then
\begin{equation}
\label{max_reg_estim}
\Vert \dot{z}\Vert_{L^p([0,T],X)}+\Vert z\Vert_{L^p([0,T],X)}+\Vert \G z\Vert_{L^p([0,T],X)}\leq C\Vert f\Vert_{L^p([0,T],X)},
\end{equation}
for a constant $C>0$ independent of $f$.

It is known that a necessary condition for the maximal $L^p$-regularity is that $\G$ generates an analytic semigroup. According to De Simon \cite{Simon} this condition is also sufficient if $X$ is a Hilbert space. On the other hand,  it is shown in \cite{Dore} that if  $\G\in \mathscr{MR}_p(0,T;X)$ for one $p\in [1,\infty]$ then $\G\in \mathscr{MR}_q(0,T;X)$ for all $q\in]1,\infty[$. Moreover if $\G\in \mathscr{MR}_p(0,T;X)$ for one $T>0$, then $\G\in \mathscr{MR}_p(0,T';X)$ for all $T'>0$. Hence we simply write $\G\in \mathscr{MR}(0,T;X)$.

\begin{rem}\label{remark:Maximal_Regularity_Operator}
\begin{enumerate}
\item[{\rm(i)}] Let  $\calC([0,T];D(\G))$ be the space of all continuous functions from $[0,T]$ to $D(\G)$, which  is a dense space of $L^p([0,T];X)$. It is know ((see \cite{LutzWeis} (2.a) or \cite{KunstmannWeis} 1.5)) that  $\G$ has maximal $L^p$-regularity on $[0,T]$ if and only if $(\Ss(t))_{t\geq 0}$ is analytic and the operator $\mathscr{R}$ defined by
\begin{equation}
\label{R_op}
(\mathscr{R}f)(t):=\G\int_0^t \Ss(t-s)f(s)ds \qquad \qquad f\in \calC([0,T],D(\G)),
\end{equation}
extends to a bounded operator on $L^p([0,T];X)$. As we will see in our main results, this characterization is very useful if one works in general Banach spaces.
\item [{\rm(ii)}] It is known (see \cite{Dore}) that if $\G\in \mathscr{MR}(0,T;X)$ then for every $\lambda\in \mathbb{C}$, $\G+\lambda\in \mathscr{MR}(0,T;X)$, hence without lost of generality, we will assume through this paper that our generators satisfy $\omega_0(\G)<0$.
\end{enumerate}
\end{rem}
In order to recall another characterization of maximal regularity, we need some definitions.

\begin{dfn}
We say that a Banach space $X$ is a UMD-space if for some (hence all) $p\in (1,\infty)$ , $\mathscr{H}\in \mathcal{L}(L^p(\mathbb{R},X))$ where
$$
(\mathscr{H}f)(t) = \frac{1}{\pi} \lim_{\epsilon\to 0} \int_{\vert s\vert>\epsilon} \frac{f(t-s)}{s}ds,\quad t\in \mathbb{R},\quad  f\in \mathcal{S}(\mathbb{R},X),
$$
where $\mathcal{S}(\mathbb{R},X)$ is the Schwartz space.
\end{dfn}
Classical  UMD-spaces are Hilbert spaces and $L^p$-spaces, where $p\in (1,\infty)$. It is to be noted that every  UMD-space is a reflexive space (see \cite{Amann}).
\begin{dfn}
A set $\tau \subset \mathcal{L}(X,Y)$ is called $\mathcal{R}$-bounded if there is a constant $C>0$ such that for all $n\in \mathbb{N}$, $T_1,...,T_n \in \tau$, $x_1,...,x_n \in X$,
\begin{equation*}
\int_0^1 \Vert \sum_{j=1}^n r_j(s) T_j x_j \Vert_Y ds \leq C \int_0^1 \Vert \sum_{j=1}^n r_j(s) x_j \Vert_X ds
\end{equation*}
where $(r_j)_{j\geq 1}$ is a sequence of independent $\{-1;1\}$-valued random variables on $[0,1]$(e.g. Rademacher variables).
\end{dfn}

\begin{rem}\label{Transfer_Function_R-boundedness}
Here we give examples of $\mathcal{R}$-bounded sets.  We let $A$ be the generator of a bounded  analytic semigroup on a Banach space $X$, $B:U\to X_{-1}$ and $C:D(A)\subset X\to U$ are linear bounded operators, where $U$ is another (boundary) Banach space. We assume that $(A,B,C)$ generates a regular linear system on $X,U,U$ with transfer function
\begin{align*}
H(\la):=C_\Lambda R(\la,A_{-1})B\in\calL(U),\qquad \la\in\rho(A),
\end{align*}
where $C_\Lambda$ is the Yosida extension of $C$ with respect to $A$, see Section \ref{sec:2}. It is shown in \cite[p.513]{HaaKun} that the set $\{H(is):s\neq 0\}$ is $\mathcal{R}$-bounded.
\end{rem}
The following result is due to Weis \cite{LutzWeis2}
\begin{thm}\label{theorem:Weis_Characterization}
Let $\G$ be the generator of a bounded analytic semigroup in a UMD-space $X$. Then $\G$ has maximal $L^p$-regularity for some (all) $p\in (1,\infty)$ if and only if the set $\{sR(is,\G);s\neq 0 \}$ is $\mathcal{R}$-bounded.
\end{thm}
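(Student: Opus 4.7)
The plan is to follow Weis's approach by reducing maximal regularity to boundedness of an operator-valued Fourier multiplier on $L^p(\R,X)$ and then appealing to an operator-valued Mikhlin multiplier theorem in UMD spaces.

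First, using Remark \ref{remark:Maximal_Regularity_Operator}(ii), I may assume $\om_0(\G)<0$, so $(\Ss(t))_{t\ge 0}$ is exponentially stable. By Remark \ref{remark:Maximal_Regularity_Operator}(i), $\G\in\mathscr{MR}(0,T;X)$ is equivalent to boundedness on $L^p([0,T],X)$ of the operator $\mathscr{R}$ defined in \eqref{R_op}. Extending $f$ by zero outside $[0,T]$ and exploiting the analyticity and exponential decay of $\Ss$, this in turn is equivalent to boundedness on $L^p(\R,X)$ of the Fourier multiplier whose symbol is
\[
M(s):=\G R(is,\G)=-I+isR(is,\G),\qquad s\in\R\setminus\{0\}.
\]

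For the necessity direction, I would assume that this Fourier multiplier is bounded on $L^p(\R,X)$ and then apply Kahane's contraction principle. Concretely, one tests the multiplier inequality on sums $\sum_j r_j(\cdot)\,\ph_j\otimes x_j$ with $r_j$ independent Rademacher variables and $\ph_j$ scalar functions sharply localized near arbitrary points $s_1,\ldots,s_n\in\R\setminus\{0\}$. Passing to the limit as the localization shrinks and using the Fubini-type argument together with the general principle that bounded operator-valued multipliers on $L^p(\R,X)$ have $\calR$-bounded ranges, one obtains $\calR$-boundedness of $\{M(s):s\neq 0\}$, and hence of $\{sR(is,\G):s\neq 0\}$.

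For sufficiency (the deep direction), the plan is to invoke the operator-valued Mikhlin-type multiplier theorem of Weis: if $X$ is a UMD space and $M:\R\setminus\{0\}\to\calL(X)$ is differentiable with both $\{M(s):s\neq 0\}$ and $\{sM'(s):s\neq 0\}$ $\calR$-bounded, then $M$ is a bounded Fourier multiplier on $L^p(\R,X)$ for every $p\in(1,\infty)$. I apply this to $M(s)=isR(is,\G)$: the first family is $\calR$-bounded by hypothesis (modulo multiplication by the scalar $i$, which does not affect $\calR$-boundedness by Kahane's principle); and using $\frac{d}{ds}R(is,\G)=-iR(is,\G)^2$ a direct computation gives the identity
\[
sM'(s)=M(s)-M(s)^2,
\]
whose $\calR$-boundedness follows from the fact that $\calR$-boundedness is preserved under linear combinations and products of uniformly $\calR$-bounded families. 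This yields boundedness of $\mathscr{R}$ on $L^p(\R,X)$ and, after restricting back to $[0,T]$, maximal $L^p$-regularity.

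The main obstacle is of course the Mikhlin multiplier theorem itself in the operator-valued UMD setting: this is the technical heart of \cite{LutzWeis2} and relies on a Littlewood--Paley decomposition, the boundedness of the vector-valued Hilbert transform that characterizes UMD, and a delicate use of $\calR$-boundedness to control the square function arising from the dyadic pieces. Once this theorem is accepted as a black box, the verification of its hypotheses for the resolvent symbol and the translation between multiplier boundedness and maximal regularity are essentially bookkeeping.
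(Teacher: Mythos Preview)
The paper does not give its own proof of this statement: Theorem~\ref{theorem:Weis_Characterization} is simply quoted from \cite{LutzWeis2} as a known result, with no argument supplied. Your outline is a faithful sketch of Weis's original proof (Fourier multiplier reformulation, $\calR$-boundedness of the range for necessity, and the operator-valued Mikhlin theorem with the identity $sM'(s)=M(s)-M(s)^2$ for sufficiency), so there is nothing to compare against in the paper itself.
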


The following remark will be useful in the last section

\begin{rem}\label{remark-for-example}

Let $X,Z,U$ be a Banach spaces such that $Z\subset X$ with dense and continuous embedding, $A_m:Z\to X$ be a closed differential operator and $G:Z\to U$ be a linear surjective operator.  We assume that the following operator
\begin{align*} A:=A_m,\qquad D(A):=\ker(G)
\end{align*}
generates a strongly continuous semigroup $\T:=(\T(t))_{t\ge 0}$ on $X$. let $\D_\la$ the Dirichlet operator associated with $A$ and $G$ (see Section \ref{sec:3}). Moreover, we assume that the following operator
\begin{align*}
B:=(\la-A_{-1})\D_\la\in\calL(U,X_{-1})
\end{align*}
is a $p$--admissible control operator for $A$. In addition, we assume that $A$ has the maximal $L^p$-regularity on $X$.

Let us first show that the operator $(-A)^\theta$, for some $\theta\in (0,\frac{1}{p})$, coincides with its Yosida extension with respect to $A$, that is:
\begin{align*}
D([(-A)^\theta]_\wedge) = D((-A)^\theta) \quad and\quad [(-A)^\theta]_\wedge = (-A)^\theta
\end{align*}
Since it is clear that  $D((-A)^\theta)\subset D([(-A)^\theta]_\wedge)$, it remains only to prove that $D([(-A)^\theta]_\wedge)\subset D((-A)^\theta)$. Then let $x\in D([(-A)^\theta]_\wedge)$ and set $y:=[(-A)^\theta]_\wedge x$ and $y_\lambda=(-A)^\theta\lambda R(\lambda,A)x$ for $\lambda$ sufficiently large. Clearly we have $y_\lambda\to y$ and $(-A)^{-\theta} y_\lambda = \lambda R(\lambda,A)x \to x$ as $\lambda\to +\infty$. Then by closeness of $(-A)^{-\theta}$,  $x=(-A)^{-\theta}y$.  Thus $x\in D((-A)^\theta)$. This ends proof.

Finally, let us show that the  triple $(A,B,(-A)^\theta)$ generates a regular system. In fact, we first prove that $range(\mathbb{D}_\mu)\subset D((-A)^\theta)$ (which is equivalent to the regularity of the system generated by $(A,B,(-A)^\theta)$) for $\mu$ sufficiently large:
We know that if $B$ is $p$-admissible then  $range(\mathbb{D}_\mu)\subset F^A_\frac{1}{p}$. Since $F^A_\frac{1}{p}\subset D((-A)^\theta)$ (see \cite{EngNag}), we have $range(\mathbb{D}_\mu)\subset D((-A)^\theta)$ and the closed graph theorem asserts that $(-A)^\theta \mathbb{D}_\mu \in \mathcal{L}(U,X)$.

By virtue of analyticity of the semigroup generated by $A$, $((-A)^\theta,A)$ are $p$-admissible. To show the well-posedness of the system generated by $(A,B,(-A)^\theta)$ we have only to show that the operator $\mathbb{F}_\infty$ defined by:
\begin{align*}
(\mathbb{F}_\infty u)(t) := (-A)^\theta \Phi_t u,\qquad u\in W^{2,p}_{0,loc}([0,\infty),U)
\end{align*}
is well defined and extends to a bounded operator on $L^p_{loc}([0,\infty),U)$. In fact, by integration by parts and assuming that $0\in \rho(A)$ we have
\begin{align*}
\Phi_t u = \mathbb{D}_0 u(t) - \int_0^t \T(t-s)\mathbb{D}_0 u'(s)ds.
\end{align*}
This show that $\Phi_t u\in D((-A)^\theta)$ since $\int_0^t T(t-s)\mathbb{D}_0 u'(s)ds \in D(A)$ for a.e $t\geq 0$. Now we show the boundedness of $\mathbb{F}_\infty$. We have
\begin{align*}
(\mathbb{F}_\infty u)(t) &= (-A)^\theta \int_0^t (-A)^{1-\theta}\T(t-s)(-A)^{\theta}\mathbb{D}_0u(s)ds\\
&= -A\int_0^t \T(t-s)(-A)^{\theta}\mathbb{D}_0u(s)ds\\
&=-(\mathcal{R}(-A)^{\theta}\mathbb{D}_0u)(t)
\end{align*}
Maximal regularity of $A$ shows the boundedness of $\mathbb{F}_\infty$. This finishes the proof.

\end{rem}

\subsection{Perturbations that are $p$-admissible observation operators}\label{sub:4.1}
In this part, we investigate maximal $L^p$-regularity for the problem \eqref{PM} in the case $K=0$. This is equivalent to study a such property for the evolution equation \eqref{AP-only}. As we have seen in the introductory section, we continue to  assume that $P:Z\subset X\to X$ and $A:=A_m$ with domain $D(A)=\ker(G)$ is the generator of strongly continuous semigroup $\T:=(\T(t))_{t\ge 0}$ on $X$. We define $\P:=P\imath$ with $\imath:D(A)\to X$ is a continuous injection. So that $\P\in\calL(D(A),X)$. We recall from Remark \ref{Hadd_thm} (i) that if $\P$ is a $p$-admissible observation operator for $A,$ then the following operator $A^P:=A+\P=(A+P)\imath$ with domain $D(A^P):=D(A)$ is the generator of a strongly continuous semigroup $\T^p:=(\T^P(t))_{t\ge 0}$ on $X$ such that $\T^P(t)X\subset D(\P_\Lambda)$ for a.e. $t>0,$ and
\begin{align*}
\T^p(t)x=\T(t)x+\int^t_0\T(t-s)\P_\Lambda \T^P(s)xds,
\end{align*}
for all $x\in X$ and $t\ge 0,$ where $\P_\Lambda$ is the Yosida extension of $\P$ with respect to $A$. On the other hand, as shown in \cite{Hadd} for any $f\in L^p([0,T])$ with $T>0,$ the mild solution of the evolution equation \eqref{AP-only} satisfies $z(s)\in D(\P_\Lambda)$ for a.e. $s\ge 0,$
\begin{align}\label{z-MV-formulat}
\begin{split}
z(t)&=\int^t_0 \T^P(t-s)f(s)ds\cr &= \int^{t}_0 \T(t-s)\left(\P_\Lambda z(s)+f(s)\right)ds,
\end{split}
\end{align}for any $t\ge 0$. In addition if we denote by $\P_{\Lambda,A^P}$ the Yosida extension of $\P$ with respect to $A^p,$ then $\P_{\Lambda,A^P}=\P_\Lambda$ on $D(\P_\Lambda)$. So by using \eqref{z-MV-formulat} and Lemma \ref{Hadd_lemma}, there exists a constant $c_T>0$ independent of $f$ such that
\begin{align}\label{need-MV}
\|\P_{\Lambda}z(\cdot)\|_{L^p([0,T],X)} \le c_T \|f\|_{L^p([0,T],X)}.
\end{align}
We now state the main result of this paragraph.
\begin{thm}\label{theorem:Miyadera}
Let $X$ be a Banach space, $p\in ]1,\infty[$ and  $\P$ a $p$-admissible observation operator for $A$. If $A\in \mathscr{MR}(0,T;X)$ then $A^P\in \mathscr{MR}(0,T;X)$.
\end{thm}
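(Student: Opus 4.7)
The strategy is to exploit the explicit representation of the mild solution of $\dot z=A^Pz+f$, $z(0)=0$ in terms of the \emph{unperturbed} semigroup $\T$. Formula \eqref{z-MV-formulat} gives, for any $f\in L^p([0,T],X)$,
\[
z(t)=\int_0^t \T(t-s)\bigl(\P_\Lambda z(s)+f(s)\bigr)\,ds,\qquad t\in[0,T],
\]
which I view as the variation-of-constants formula for $A$ driven by the forcing term $g:=\P_\Lambda z+f$. If I can show $g\in L^p([0,T],X)$, then the maximal $L^p$-regularity of $A$ applied to this $g$ will immediately deliver the regularity required for $A^P$.

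The first step is therefore to show $\P_\Lambda z\in L^p([0,T],X)$. This is precisely \eqref{need-MV}, obtained by applying Lemma \ref{Hadd_lemma} to the semigroup generated by $A^P$ together with the Yosida extension $\P_{\Lambda,A^P}=\P_\Lambda$; the relevant admissibility of $\P$ for $A^P$ follows from the Miyadera-Voigt hypothesis on $\P$ and the perturbed variation-of-constants formula \eqref{MV-VCF1}. Hence $g=\P_\Lambda z+f\in L^p([0,T],X)$.

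Since $A\in\mathscr{MR}(0,T;X)$, the unique mild solution $w$ of $\dot w=Aw+g$, $w(0)=0$ lies in $W^{1,p}([0,T],X)\cap L^p([0,T],D(A))$ and satisfies $\dot w=Aw+g$ a.e.\ on $[0,T]$. But $z$ is precisely this mild solution by the displayed formula above, so $z\in W^{1,p}([0,T],X)\cap L^p([0,T],D(A))$ with $\dot z=Az+\P_\Lambda z+f$ a.e. Because $D(A^P)=D(A)$, the regularity $z\in L^p([0,T],D(A^P))$ required in Definition \ref{ACP}-sense is automatic. For a.e.\ $s\in[0,T]$ one has $z(s)\in D(A)\subset Z$, so $\P_\Lambda z(s)=\P z(s)=Pz(s)$, and therefore $\dot z=Az+Pz+f=A^Pz+f$ a.e. Uniqueness of the solution follows at once from the fact that $A^P$ generates a $C_0$-semigroup on $X$ (Remark \ref{Hadd_thm}(i)), applied to the difference of two candidate solutions.

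The one genuine subtlety is that the forcing $g$ depends on $z$ itself, so one must know \emph{a priori} that $\P_\Lambda z\in L^p$ before invoking the maximal regularity of $A$; this is exactly what Miyadera-Voigt admissibility provides via \eqref{need-MV}, so the bootstrap goes through cleanly. Notably, no UMD assumption on $X$, no $\calR$-boundedness of the resolvent, and no fractional-power factorization of $P$ (in contrast to the route of \cite{KunWei}) is needed for this particular conclusion.
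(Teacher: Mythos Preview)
Your argument is correct and, in fact, slightly more streamlined than the paper's. Both proofs rest on the same two ingredients established just before the theorem: the representation \eqref{z-MV-formulat} of the $A^P$-mild solution as an $A$-convolution driven by $g:=\P_\Lambda z+f$, and the $L^p$-estimate \eqref{need-MV} for $\P_\Lambda z$ coming from admissibility. Where the approaches diverge is in how they cash this in. The paper first proves separately, via the resolvent estimate $\|\P R(\lambda,A)\|\le \tilde M/({\rm Re}\,\lambda-\omega)^{1/q}$, that $\T^P$ is analytic, and then invokes the characterization of Remark~\ref{remark:Maximal_Regularity_Operator}(i) (analytic semigroup plus boundedness of the operator $\mathscr R^P$); it estimates $\mathscr R^P f=\mathscr R(\P_\Lambda z+f)+\P\int_0^{\cdot}\T(\cdot-s)(\P_\Lambda z(s)+f(s))\,ds$ term by term. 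You instead work directly with the definition: since $g\in L^p$ and $A\in\mathscr{MR}$, the mild solution $z=\int_0^{\cdot}\T(\cdot-s)g(s)\,ds$ automatically lies in $W^{1,p}\cap L^p(D(A))=W^{1,p}\cap L^p(D(A^P))$ and satisfies $\dot z=Az+g=A^Pz+f$ a.e.; uniqueness follows from the semigroup generated by $A^P$. Your route sidesteps the analyticity step altogether, which makes the proof shorter; the paper's route has the side benefit of recording the analyticity of $\T^P$ as an independent fact (cf.\ Remark~\ref{Weis-Cor.4}(1)).
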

\begin{proof}
Assume that $A\in \mathscr{MR}(0,T;X)$, so that $A$ generates an analytic semigroup on $X$.  This shows that there exists  $\omega \in \mathbb{R}$ such that $\mathbb{C}_\omega:=\{\lambda\in \mathbb{C},Re\lambda>\omega\} \subset \rho(A)$ and for every $\lambda\in \mathbb{C}_\omega$ we have:
\begin{align*}
\Vert R(\lambda,A) \Vert \leq \frac{M}{\vert\lambda-\omega\vert}.
\end{align*}
On the other hand, for $\la\in\rho(A),$
\begin{align*}
\la-A^P=(I-PR(\la,A))R(\la,A).
\end{align*}
By the admissibility of $\P$ for $A,$ there exists $\tilde{M}>0$ such that
\begin{align*}
\|\P R(\la,A)\|\le \frac{\tilde{M}}{({\rm Re}\la -\om)^{\frac{1}{q}}},\quad {\rm Re}\la >\om.
\end{align*}
Now for ${\rm Re}\la > \om + 2^q \tilde{M}^q:=\alpha_0$ we have
\begin{align*}
\Vert \P R(\lambda,A) \Vert \leq \frac{1}{2}
\end{align*}
thus $(I-\P R(\lambda,A))$ is invertible and $\Vert(I-\P R(\lambda,A))^{-1}\Vert\leq 2$. Then
\begin{align*}
R(\lambda,A^P)=R(\lambda,A)(I-PR(\lambda,A))^{-1}.
\end{align*}
Finally, for $\lambda\in \mathbb{C}_{\alpha_0}$ we have
\begin{align*}
\Vert R(\lambda,A^p) \Vert \leq \frac{2M}{\vert\lambda-\om\vert}.
\end{align*}
This implies, by \cite[Thm.12.13] {ReRo}; that $(\T^P(t))_{t\geq0}$ is analytic. We now define,  for any $f\in \calC([0,T],D(A)),$
\begin{align*}
(\mathscr{R}f)(t):=A\int_0^t \T(t-s)f(s)ds,\qquad (\mathscr{R^P}f)(t):=A^P\int_0^t \T^P(t-s)f(s)ds,\qquad t\in [0,T].
\end{align*}
Due to \eqref{z-MV-formulat}, we obtain
\begin{align*}
\mathscr{R^P}f=\mathscr{R}(\P_\Lambda z(\cdot)+f)+\P \int^{\cdot}_0 \T(t-s)\left(\P_\Lambda z(s)+f(s)\right)ds.
\end{align*}
Using Remark \ref{remark:Maximal_Regularity_Operator} (i), the estimate \eqref{need-MV} and Lemma \ref{Hadd_lemma}; there exists a constant $\tilde{c}_T>0$ independent of $f$ such that
\begin{align*}
\|\mathscr{R^P}f\|_{L^p([0,T],X)}\le \tilde{c}_T \|f\|_{L^p([0,T],X)}.
\end{align*}
This ends the proof, due to  Remark \ref{remark:Maximal_Regularity_Operator}.
\end{proof}
\begin{rem}\label{Weis-Cor.4}
\begin{enumerate}
  \item In the proof of Theorem \ref{theorem:Miyadera}, we have proved that for $p$-admissible observation operators $\P$ for $A$, the operator $A$ generates an analytic semigroup on a Banach space $X$ if and only if it is so for the operator $A^p$. Hence if $X$ is a Hilbert space, the maximal $L^p$-regularity of $A^P$ is automatically guaranted  by \cite{Simon}.
  \item As explained in Remark \ref{Hadd_thm} (i), $p$-admissible observation operators are also Miyadera-Voigt perturbations operators  for $A$. We mention that the authors of \cite[Cor.4]{KunWei} have obtained a result  on maximal $L^p$-regularity under Miyadera-Voigt perturbations, where it is assumed that the state space $X$ is reflexive (or UMD) and the perturbation $\P$ is a closed and densely defined operator  and satisfies a very special Miyadera-Voigt condition. In our Theorem \ref{theorem:Miyadera}, $X$ is supposed to be a general Banach space and the perturbation $\P$ is not closed and then with even  minimum conditions we have obtained the maximal $L^p$-regularity for $A^P$.
\end{enumerate}
\end{rem}
In the sequel we will also compare our result Theorem \ref{theorem:Miyadera} with a result in \cite[Thm.1]{KunWei} about small perturbations. To that purpose we need the following  lemma.

Let $C\in\calL(D(A),Y)$ for a Banach space $Y$. Define on $D(A)$ the operator
$$
\mathbb{J}x = \frac{1}{2i\pi} \int_\Gamma (-\mu)^{-\beta} CR(\mu,A)x d\mu,
$$
here $\Gamma:= \Gamma(\psi,\epsilon) =\Gamma^1(\psi,\epsilon)\cup \Gamma^2(\psi,\epsilon)\cup\Gamma^3(\psi,\epsilon) $ denotes the upwards oriented path defined by
\begin{align*}
\Gamma^1(\psi,\epsilon) &= \{\lambda\in \mathbb{C}:\vert\lambda\vert\geq\epsilon,arg\lambda=-\psi \}\\
\Gamma^2(\psi,\epsilon) &= \{\lambda\in \mathbb{C}:\vert\lambda\vert=\epsilon,\vert arg\lambda\vert>\psi \}\\
\Gamma^3(\psi,\epsilon) &= \{\lambda\in \mathbb{C}:\vert\lambda\vert\geq\epsilon,arg\lambda=\psi \},
\end{align*}
for $\psi \in (\frac{\pi}{2},\pi)$.
\begin{lem}\label{lemma:observation_fractional_power}
Assume that $A$ is a sectorial operator and $\beta>\frac{1}{p}$ and let  $C\in \mathcal{L}(D(A),Y)$ such that \begin{equation}\label{CR(z,A)_estimate}
\Vert CR(\mu,A) \Vert \leq \frac{M}{({\rm Re} \mu)^\frac{1}{q}},
\end{equation}
for some constant $M>0$ and $\mu$ in some half plane. Then for every $x\in D(A)$, the integral $$\frac{1}{2i\pi} \int_\Gamma (-\mu)^{-\beta} CR(\mu,A)x d\mu,$$ exists as a Bochner integral, the operator $\mathbb{J}$ can be extended to a bounded operator from $X$ to $Y$. Moreover,  for all $x\in D(A)$, $\mathbb{J} x = C(-A)^{-\beta}x$.
\end{lem}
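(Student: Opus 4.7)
The plan is to prove the statement in three stages: (a) convergence of the contour integral together with an a priori bound $\|\mathbb{J}x\|_Y\le K\|x\|$; (b) identification $\mathbb{J}x=C(-A)^{-\beta}x$ on $D(A)$; and (c) extension to $X$ by density.

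For (a), I would parametrize $\Gamma^1$ and $\Gamma^3$ by $\mu=re^{\pm i\psi}$ with $r\ge\varepsilon$, and $\Gamma^2$ by the arc of radius $\varepsilon$. On the compact arc the integrand is uniformly bounded and contributes a finite amount. On the rays, $|(-\mu)^{-\beta}|=r^{-\beta}$, so (modulo the extension of the half-plane bound to $\Gamma$ discussed below) the hypothesis produces a factor of order $r^{-1/q}$ for $\|CR(\mu,A)\|$, and the integrand is controlled by $r^{-\beta-1/q}$. The integral $\int_\varepsilon^{\infty}r^{-\beta-1/q}\,dr$ is finite \emph{precisely} when $\beta+1/q>1$, i.e.\ $\beta>1/p$, which is the standing assumption. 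Hence the Bochner integral defining $\mathbb{J}x$ converges for every $x\in X$ and
\[
\|\mathbb{J}x\|_Y\le K\|x\|,\qquad K:=\frac{1}{2\pi}\int_\Gamma|(-\mu)^{-\beta}|\,\|CR(\mu,A)\|\,|d\mu|<\infty.
\]

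For (b), I would invoke the classical Dunford--Balakrishnan representation
\[
(-A)^{-\beta}x=\frac{1}{2\pi i}\int_\Gamma (-\mu)^{-\beta}R(\mu,A)x\, d\mu,\qquad x\in X,
\]
valid for sectorial $A$ and $\beta>0$ on this type of Hankel contour. For $x\in D(A)$, the resolvent sends $D(A)$ into $D(A^2)\subset D(A)$, and the sectorial bound $\|R(\mu,A)\|\le M/|\mu|$ renders the integrand Bochner integrable in the graph norm of $A$. Since $C:D(A)\to Y$ is bounded with respect to that norm, $C$ commutes with the Bochner integral and one obtains
\[
C(-A)^{-\beta}x=\frac{1}{2\pi i}\int_\Gamma (-\mu)^{-\beta}CR(\mu,A)x\,d\mu=\mathbb{J}x.
\]
Step (a) then shows that $\mathbb{J}$ is bounded from $(D(A),\|\cdot\|_X)$ into $Y$, so by density it extends uniquely to a bounded operator $X\to Y$, giving (c).

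The main obstacle is extending the bound $\|CR(\mu,A)\|\le M/(\mathrm{Re}\,\mu)^{1/q}$ from the assumed half-plane to the contour $\Gamma$, which for $\psi>\pi/2$ enters the left half-plane where the hypothesis is silent. The remedy is to exploit analyticity of $\sgT$: the identity $CR(\lambda,A)x=\int_0^{\infty}e^{-\lambda t}C\sgT(t)x\,dt$ holds for $\mathrm{Re}\,\lambda$ large, and since $t\mapsto C\sgT(t)x$ admits an analytic extension to a sector (by analyticity of $\sgT$ and $p$-admissibility of $C$ along rotated rays), one rotates the ray of integration by a small angle $\vartheta$ and applies H\"{o}lder's inequality to obtain $\|CR(\mu,A)\|\lesssim |\mu|^{-1/q}$ uniformly on a sector strictly containing the right half-plane. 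For $\psi$ chosen slightly larger than $\pi/2$ but compatible with the sectoriality angle of $A$, the required bound along $\Gamma$ follows and the arguments of (a)--(c) apply verbatim.
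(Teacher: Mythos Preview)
Your proposal is correct and reaches the same conclusion, but the route differs from the paper's in the key technical step. The paper does \emph{not} first extend the half-plane estimate \eqref{CR(z,A)_estimate} to the rays of $\Gamma$; instead, for $x\in D(A)$ it rewrites the integrand via $CR(\mu,A)x=\mu^{-1}Cx+\mu^{-1}CR(\mu,A)Ax$ to obtain convergence, and then proves the uniform bound $\|\mathbb{J}x\|\le\kappa\|x\|$ by deforming each truncated contour $\Gamma_n$ onto the circular arc $\mathcal{C}_n=\{|\lambda|=n,\ |\arg\lambda|\le\psi\}$ and estimating there, finally passing to the limit. Your approach is more direct: once the sector estimate $\|CR(\mu,A)\|\lesssim|\mu|^{-1/q}$ is available on $\Gamma$, convergence and the operator bound for \emph{all} $x\in X$ follow in one stroke from $\int_\varepsilon^\infty r^{-\beta-1/q}\,dr<\infty$, and step~(c) becomes superfluous. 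The identification step~(b) is essentially the same in both arguments.

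One remark on your extension argument: rotating the Laplace integral and invoking ``$p$-admissibility of $C$ along rotated rays'' works but is heavier than needed, and that admissibility is not an assumption --- it would itself require justification. A much shorter device (used in the paper in the proof of Theorem~\ref{theorem:staffans-weiss-analytic}) is the resolvent identity: for $\mu$ on a ray of $\Gamma$ set $s=|\mu|>0$ and write
\[
CR(\mu,A)=CR(s,A)\bigl[I+(s-\mu)R(\mu,A)\bigr],
\]
so that $\|CR(\mu,A)\|\le M\,s^{-1/q}\bigl(1+|s-\mu|\cdot M'|\mu|^{-1}\bigr)\le C_\psi\,|\mu|^{-1/q}$, using only sectoriality of $A$ and the half-plane hypothesis. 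This replaces your last paragraph with two lines and avoids any appeal to the semigroup.
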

\begin{proof}
Fix $x\in D(A)$ and define $g(\mu)=(-\mu)^{-\beta}CR(\mu,A)x$, $\mu\in \Gamma$. Then we get
\begin{equation}\label{CR(z,A)}
CR(\mu,A)x = \frac{Cx}{\mu} + \frac{CR(\mu,A)Ax}{\mu}
\end{equation}
Cauchy's Theorem applied to the half plane yields that
$$
\int_\Gamma (-\mu)^{-(\beta +1)}d\mu = 0.
$$
It follows by (\ref{CR(z,A)}) that
$$
\frac{1}{2i\pi} \int_\Gamma (-\mu)^{-\beta} CR(\mu,A)x d\mu = \frac{1}{2i\pi} \int_\Gamma (-\mu)^{-(\beta + 1)} CR(\mu,A)Ax d\mu,
$$
which exists as a Bochner integral due to \eqref{CR(z,A)_estimate}. Let $\Gamma_n:= \Gamma_n(\psi,\epsilon) =\Gamma_n^1(\psi,\epsilon)\cup \Gamma^2(\psi,\epsilon)\cup\Gamma_n^3(\psi,\epsilon) $ denotes the upwards oriented path defined by
\begin{align*}
\Gamma_n^1(\psi,\epsilon) &= \{\lambda\in \mathbb{C}:n\geq\vert\lambda\vert\geq\epsilon,arg\lambda=-\psi \}\\
\Gamma_n^3(\psi,\epsilon) &= \{\lambda\in \mathbb{C}:n\geq\vert\lambda\vert\geq\epsilon,arg\lambda=\psi \}
\end{align*}
It follows from Cauchy's Theorem that
\begin{equation}\label{limCR(z,A)}
\frac{1}{2i\pi} \int_\Gamma (-\mu)^{-\beta} CR(\mu,A)x d\mu = \lim_{n\to\infty} \frac{1}{2i\pi} \int_{\Gamma_n} (-\mu)^{-\beta} CR(\mu,A)x d\mu.
\end{equation}
Let $\mathcal{C}_n:=\mathcal{C}_n(\psi)$ and $\overline{\Gamma}^2:=\overline{\Gamma}^2(\psi,\epsilon)$ be the upwards oriented curve defined by
$$
\mathcal{C}_n:=\{\lambda\in \mathbb{C}:\vert \lambda\vert = n,\vert arg \lambda\vert \leq\psi\}
$$
$$
\overline{\Gamma}^2(\psi,\epsilon):=\{\lambda\in \mathbb{C}:\vert \lambda\vert = \epsilon,\vert arg \lambda\vert \leq\psi\}.
$$
Using again Cauchy's Theorem we get
\begin{align*}
&\frac{1}{2i\pi} \int_{\Gamma_n^1\cup\Gamma_n^3} (-\mu)^{-\beta} CR(\mu,A)x dz\\
&= \frac{1}{2i\pi} \int_{\mathcal{C}_n} (-\mu)^{-\beta} CR(\mu,A)x dz +\frac{1}{2i\pi} \int_{\overline{\Gamma}^2} (-\mu)^{-\beta} CR(\mu,A)x d\mu
\end{align*}
Now, we are going to estimate the integrals over ${\Gamma}^2$, $\overline{\Gamma}^2$ and $\mathcal{C}_n$. We start with the integral over $\mathcal{C}_n$. Then we obtain
$$
\frac{1}{2i\pi} \int_{\mathcal{C}_n} (-\mu)^{-\beta} CR(\mu,A)x d\mu = \frac{1}{2\pi} \int_{-\psi}^\psi (n e^{i(\theta-\pi)})^{-\beta} CR(n e^{i\theta},A)n e^{i\theta}x d\theta.
$$
The fact that $C$ satisfies (\ref{CR(z,A)_estimate}), we obtain
\begin{align*}
\left\| \frac{1}{2i\pi} \int_{\mathcal{C}_n} (-\mu)^{-\beta} CR(\mu,A)x d\mu \right\| &\leq  \frac{1}{2\pi} \int_{-\psi}^\psi \Vert (n e^{i(\theta-\pi)})^{-\beta} CR(n e^{i\theta},A)n e^{i\theta}x \Vert d\theta\\
&\leq \frac{M}{2\pi n^{\beta-\frac{1}{p}}} \int_{-\psi}^\psi \frac{d\theta}{cos(\theta)^\frac{1}{q}}\Vert x\Vert .
\end{align*}
The same estimate holds for the integral over $\overline{\Gamma}^2$. Finally for the integral over ${\Gamma}^2$, we have
\begin{align*}
\left\| \frac{1}{2i\pi} \int_{{\Gamma}^2} (-\mu)^{-\beta} CR(\mu,A)x d\mu \right\| &\leq  \frac{1}{2\pi} \int_{{\Gamma}^2} \vert(-\mu)^{-(\beta+1)}\vert\, \Vert CR(\mu,A)\Vert\Vert x\Vert  d\mu\\
&\leq \frac{M_\epsilon}{\epsilon^\frac{1}{q}} \Vert x\Vert.
\end{align*}
since ${\Gamma}^2$ is a compact set and $\mu\to CR(\mu,A)$ is analytic on ${\Gamma}^2$. Now, putting everything together, we find that there is a constant $\kappa$ not depending on $n$ such that
$$
\left\| \frac{1}{2i\pi} \int_{\mathcal{C}_n} (-\mu)^{-\beta} CR(\mu,A)x d\mu \right\| \leq \kappa \Vert x\Vert
$$
From \eqref{limCR(z,A)} we obtain
$$
\Vert \mathbb{J}x\Vert_Y \leq \kappa\Vert x\Vert,for\ all\ x\in D(A)
$$
Therefore, $\mathbb{J}$ extends to a bounded linear operator on $X$.\\
Next we will show that for $x\in D(A)$ we have $\mathbb{J}x = C(-A)^{-\beta}x$. This is equivalent to show that the operator $C$ and the integral $\int_\Gamma (-\mu)^{-\beta}R(\mu,A)xd\mu$ commute. Since $$(-A)^{-\beta} x= \frac{1}{2i\pi}\int_\Gamma (-\mu)^{-\beta}R(\mu,A)xd\mu,$$ $A$ commutes with $\int_\Gamma (-\mu)^{-\beta}R(\mu,A)xd\mu$ for every $x\in D(A)$. Now let us show that $\int_{\Gamma_n} (-z)^{-\beta}R(\mu,A)xd\mu$ converges in $D(A)$. The closedness of $A$ yields
$$
A\int_{\Gamma_n} (-\mu)^{-\beta}R(\mu,A)xd\mu = \int_{\Gamma_n} (-\mu)^{-\beta}R(\mu,A)Axd\mu,
$$
thus the integral $\int_{\Gamma_n} (-\mu)^{-\beta}R(\mu,A)xd\mu$ converges in $D(A)$. As $C$ is continuous on $D(A)$, we obtain
\begin{align*}
C(-A)^{-\beta}x &= \lim_{n\to\infty} \frac{1}{2i\pi}C\int_{\Gamma_n} (-\mu)^{-\beta}R(\mu,A)xd\mu\\
&=\lim_{n\to\infty} \frac{1}{2i\pi}\int_{\Gamma_n} (-\mu)^{-\beta}CR(\mu,A)xd\mu\\
&= \mathbb{J}x.
\end{align*}
for all $x\in D(A)$. This ends the proof.
\end{proof}
\begin{rem}\label{comparaison-Luts-small} If $\P\in\calL(D(A),X)$ is $p$-admissible then it verifies the estimate \eqref{CR(z,A)_estimate}. If in addition $A$ is sectorial, then for $\beta>\frac{1}{p}$ the operator $\P(-A)^{-\beta}$ has a bounded extension to $X,$ due to Lemma \ref{lemma:observation_fractional_power}. On the other hand, for any $x\in D(A)$ one can write
\begin{align*}
Px=\P(-A)^{-\beta}(-A)^{\beta}x.
\end{align*}
This implies that there exists a constant $c>0$ such that
\begin{align*}
\|Px\|\le c \|(-A)^{\beta}x\|.
\end{align*}
As $(-A)^{\beta}$ is a small perturbation for $A,$ then $P$ is so. Now by applying \cite[Thm.1]{KunWei}, the operator $A^P$ is sectorial as well. But if $A$ has the maximal $L^p$-regularity, the result of \cite[Thm.1]{KunWei} confirms that $A^p$ has also the maximal $L^p$-regularity only if the state space $X$ is a UMD space. However  Theorem \ref{theorem:Miyadera}  shows that the  maximal $L^p$-regularity  is preserved for $A^p$ even if we work in a  general Banach space. This confirms that the $p$-admissibility for the perturbation operator is a very powerful tool to prove maximal $L^p$-regularity in Banach spaces.
\end{rem}

\subsection{Desch-Schappacher perturbation}\label{sub:4.2}
In this section we will discuss maximal $L^p$-regularity of the perturbed boundary problem \eqref{PM} (or equivalently \eqref{calA-max}) under conditions {\bf(H1)} and {\bf(H2)} as in Section \ref{sec:3}   and when the boundary perturbation $K$ satisfies the condition
\begin{itemize}
\item[] \textbf{(H3)} $K:X\to U$ is linear bounded (i.e. $K\in\calL(X,U)$).
\end{itemize}
On the other hand, let $B$ as in \eqref{B_op}. We shall also consider the following assumption
\begin{itemize}
\item[] \textbf{(H4)} $B$ is a $p$-admissible control operator for $A$.
\end{itemize}
We first study the maximal $L^p$-regularity for the evolution equation \eqref{calA-only}, where the operator $(\calA,D(\calA))$ is defined by \eqref{cald-def}. As $K$ is bounded then, under the above  conditions, the triple operator  $(A,B,K)$ generates a regular linear system on $X,U,U$ with $I_U:U\to U$ as admissible feedback. By Theorem \ref{theorem:Hadd_Manzo_Rhandi}, the operator $\calA$ generates a strongly continuous semigroup $\T^{cl}:=(\T^{cl}(t))_{\ge 0}$ on $X$ and then the unique mild solution \eqref{calA-only} is given by
\begin{align} \label{DS-form-mild0}
z(t)=\int^{t}_0 \T^{cl}(t-s)f(s)ds,
\end{align}
for any $t\ge 0$ and $f\in L^p([0,T],X)$ with $T>0$. According to Proposition \ref{VCF-calA-f}, this mild solution satisfies also
\begin{align}\label{DS-form-mild}
z(t)=\int^t_0\T_{-1}(t-s)BKz(s)ds+\int^t_0 \T(t-s)f(s)ds,\qquad t\ge 0.
\end{align}
\begin{rem}\label{Weis-DS-gene}
Let us assume that $\T$ is an analytic semigroup on $X$ and $B$ satisfies the condition \begin{align}\label{Conj-Weiss-B}\|R(\la,A_{-1})B\|\le\frac{\kappa}{({\rm Re}\la -\om)^{\frac{1}{p}}},\qquad {\rm Re}\la>\om,
\end{align}for any $\om>\om_0(A)$ and a constant $\kappa>0$. Then without assuming the condition \textbf{(H4)}, one can use the same argument as in \cite[Thm.8]{KunWei} (in the case $\al=0$) and the fact that $\calA$ coincides with the part of the operator $A_{-1}+BK$ on $X$ to prove that $\calA$ generates an analytic semigroup on $X$. In the absence of analyticity of $\T$ one cannot prove this generation result. Observe that our condition \textbf{(H4)} implies the estimate \eqref{Conj-Weiss-B}, see e.g. \cite[chap.3]{TucWei}. With conditions {\bf(H1)} to \textbf{(H4)} we have showed that $\calA$ is a generator on $X$ without assuming any analyticity of $\T$, see Remark \ref{Hadd_thm} (ii).
\end{rem}
\begin{thm}\label{Desch_SchapThm}
Assume that $X$ is a Banach state space and  that conditions {\bf(H1)} to {\bf(H4)} are verified. If $A\in \mathscr{MR}(0,T;X)$ then $\calA \in \mathscr{MR}(0,T;X)$ (and hence the evolution equation \eqref{calA-only}  has the maximal $L^p$-regularity).
\end{thm}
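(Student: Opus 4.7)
The plan is to verify the maximal regularity characterization of Remark \ref{remark:Maximal_Regularity_Operator}(i). Because $A\in\mathscr{MR}(0,T;X)$ forces $A$ to generate an analytic semigroup and condition \textbf{(H4)} supplies the resolvent estimate noted in Remark \ref{Weis-DS-gene}, that remark guarantees that $\calA$ generates an analytic semigroup $(\T^{cl}(t))_{t\ge 0}$ on $X$. It therefore suffices to prove that the operator $\mathscr{R}^{cl}:f\mapsto \calA(\T^{cl}\ast f)$, defined a priori on $\calC([0,T];D(\calA))$, extends to a bounded operator on $L^p([0,T];X)$.

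Assuming without loss of generality that $0\in\rho(A)$, so that $B=-A_{-1}\mathbb{D}_0$, I restrict to the dense subspace of $f\in\calC^1([0,T];D(\calA))$ with $f(0)=0$. For such $f$ the solution $z_f$ is of class $\calC^1$, and Proposition \ref{VCF-calA-f} decomposes it as $z_f=v_f+\psi_f$ with
\begin{equation*}
v_f(t)=\int_0^t\T(t-s)f(s)ds,\qquad \psi_f(t)=\int_0^t\T_{-1}(t-s)BKz_f(s)ds.
\end{equation*}
Combining $\dot v_f=Av_f+f$ with $\dot z_f=\calA z_f+f$ yields $\mathscr{R}^{cl}f=\calA z_f=\mathscr{R}f+\dot\psi_f$, where $\mathscr{R}$ is the maximal regularity operator associated with $A$.

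Evaluating $\dot\psi_f$ requires an integration by parts in the spirit of Remark \ref{remark-for-example}: setting $h:=Kz_f\in\calC^1([0,T];U)$ with $h(0)=0$ and using $A_{-1}\T_{-1}(t-s)=-\partial_s\T_{-1}(t-s)$, one obtains
\begin{equation*}
\psi_f(t)=\mathbb{D}_0 h(t)-\int_0^t\T(t-s)\mathbb{D}_0 h'(s)ds;
\end{equation*}
differentiating in $t$ then cancels the boundary terms and produces $\dot\psi_f=-\mathscr{R}(\mathbb{D}_0 K\dot z_f)$. Substituting $\dot z_f=\mathscr{R}^{cl}f+f$ leads to the Volterra operator equation
\begin{equation*}
(I+V)\mathscr{R}^{cl}f=\mathscr{R}(I-\mathbb{D}_0 K)f,\qquad (Vg)(t):=\int_0^t A\T(t-s)\mathbb{D}_0 Kg(s)ds,
\end{equation*}
whose right-hand side is in $L^p$ by MR of $A$ and the boundedness of $\mathbb{D}_0 K\in\calL(X)$.

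The main obstacle is the inversion of $I+V$ on $L^p([0,T];X)$. My plan is to exploit that $V$ is a convolution Volterra operator with integrable operator-valued kernel: Remark \ref{remark-for-example} shows that the $p$-admissibility of $B$ combined with analyticity of $\T$ yields $(-A)^\theta\mathbb{D}_0\in\calL(U,X)$ for every $\theta\in(0,1/p)$, whence
\begin{equation*}
\|A\T(\tau)\mathbb{D}_0 K\|_{\calL(X)}\le\|(-A)^{1-\theta}\T(\tau)\|\,\|(-A)^\theta\mathbb{D}_0\|\,\|K\|\le C_\theta\,\tau^{\theta-1},
\end{equation*}
and the $L^1([0,T_0];\calL(X))$-norm of this kernel is at most $C_\theta\|K\|T_0^{\theta}/\theta$, which tends to zero as $T_0\downarrow 0$. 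By Young's inequality for operator-valued convolutions one gets $\|V\|_{\calL(L^p([0,T_0];X))}\le\|A\T(\cdot)\mathbb{D}_0 K\|_{L^1([0,T_0])}<1$ for $T_0$ small enough, so a Neumann series inverts $I+V$ and $\mathscr{R}^{cl}=(I+V)^{-1}\mathscr{R}(I-\mathbb{D}_0 K)$ is bounded on $L^p([0,T_0];X)$. Hence $\calA\in\mathscr{MR}(0,T_0;X)$, and the $T$-independence of maximal regularity recalled after Remark \ref{remark:Maximal_Regularity_Operator} lifts this to $\calA\in\mathscr{MR}(0,T;X)$.
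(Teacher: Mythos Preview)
Your proof is correct and reaches the same operator identity as the paper---both arguments reduce to an equation of the shape $(I\pm\mathscr{F})\mathscr{R}^{cl}f=\text{(bounded)}$ with $\mathscr{F}=\mathscr{R}\circ(\mathbb{D}_\mu K)$ and then invert $I\pm\mathscr{F}$ by a Neumann series---but the way you force $\mathscr{F}$ to be a contraction is genuinely different. The paper keeps the full interval $[0,T]$ fixed and instead lets the \emph{spectral parameter} $\mu$ vary: using only the elementary admissibility bound $\|\mathbb{D}_\mu\|\le\kappa(\mathrm{Re}\,\mu-\omega)^{-1/p}$ it picks $\mu$ so large that $\|\mathbb{D}_\mu K\|\le\frac{1}{2c_T}$, whence $\|\mathscr{F}^\mu\|_{\calL(L^p[0,T])}\le c_T\|\mathbb{D}_\mu K\|\le\tfrac12$. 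You instead freeze $\mu=0$, represent $V$ as a genuine convolution with kernel $A\T(\tau)\mathbb{D}_0 K$, invoke the Favard/fractional-power fact $(-A)^\theta\mathbb{D}_0\in\calL(U,X)$ from Remark~\ref{remark-for-example} to get $\|A\T(\tau)\mathbb{D}_0 K\|\le C_\theta\tau^{\theta-1}\in L^1$, and shrink the \emph{time interval} $T_0$ to make $\|V\|<1$, finishing with the $T$-independence of maximal regularity. The paper's route is lighter on machinery (no fractional powers, no short-time step) and yields the estimate directly on $[0,T]$; your route makes the Volterra structure and the pointwise kernel decay explicit, which is a nice byproduct but costs you the extra ingredients of Remark~\ref{remark-for-example} and the Dore $T$-independence lemma.
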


\begin{proof}
Let us first show that the semigroup $\T^{cl}$ generated by $\calA$ is analytic. The condition  $A\in \mathscr{MR}(0,T;X)$ implies that the semigroup $\T$ is analytic. Hence, by \cite[Thm. 12.31]{ReRo}, we can find $\om>\om_0(A)$ and a constant $c>0$ such that
\begin{align}\label{used-here}
\Vert R(\lambda,A)\Vert \leq \frac{c}{\vert \lambda - \omega \vert},
\end{align}
for and $\la\in\C$ such that ${\rm Re}\la>\om$. Now due to \eqref{Conj-Weiss-B}, for ${\rm Re}\lambda > \om+(2\kappa\|K\|)^p=:\tilde{\om},$ we have
\begin{align*}
1\in\rho(R(\la,A_{-1})BK)\quad \text{and}\quad \left\|(I-R(\la,A_{-1})BK)^{-1}\right\|\le 2.
\end{align*}
According to Theorem \ref{theorem:Hadd_Manzo_Rhandi}, we have $\{\la\in\C: {\rm Re}\lambda>\tilde{\om}\}\subset \rho(\calA)$ and
\begin{equation*}
\Vert R(\lambda,\calA)\Vert \leq \frac{2c}{\vert \lambda - \tilde{\om}\vert},
\end{equation*}
for some constant ${\rm Re}\lambda>\tilde{\om}$, due to \eqref{used-here}. This shows that  $(\sgT^{cl} (t))_{t\geq 0}$ is analytic, by \cite[Thm 12.31 ]{ReRo}. Now define the following linear operators
$$
(\mathscr{R}^{cl}f)(t):=\calA\int_0^t \sgT^{cl}(t-s)f(s)ds \qquad\text{and}\quad (\mathscr{R}f)(t):=A\int_0^t \sgT(t-s)f(s)ds,
$$
for almost every  $t\in [0,T]$ and  all $f\in C([0,T],D(\calA))$. Combining \eqref{DS-form-mild0}, \eqref{3-calA-f-z} together with \eqref{cald-def}, for almost every $t\in [0,T]$ and all  $f\in C([0,T],D(\calA))$ we obtain
\begin{align}\label{EE}
\mathscr{R}^{cl}f=A_m \int^t_0\T_{-1}(t-s)BKz(s)ds+(\mathscr{R}f)(t).
\end{align}
On the other hand, taking in to account that the function $z$ is the solution of the evolution equation \eqref{calA-only},   using an integration by parts and the fact that ${\rm range}(\D_\mu)\subset \ker(\mu-A_m)$ for any $\mu\in\rho(A)$  we have
\begin{align*}
A_m \int^t_0\T_{-1}(t-s)BKz(s)ds&=\mu \left(\mathscr{R}( Kz(\cdot)\right)(t)+ A_m \int^t_0\T_{-1}(t-s)(-A_{-1})\D_\mu  Kz(s)ds\cr &= \mu \left(\mathscr{R}( Kz(\cdot)\right)(t)+ A \int^t_0\T (t-s) \D_\mu K (\calA z(s)+f(s))ds,
\end{align*}
for almost every $t\ge 0,$ and all $f\in C([0,T],D(\calA))$. Now the identity \eqref{EE} becomes
\begin{align}\label{EEE}
(\mathscr{R}^{cl}f)(t)=(\mathscr{R}g_\mu)(t)+ \left(\mathscr{R}( \D_\mu K \mathscr{R}^{cl}f)\right)(t)
\end{align}
for almost every $t\ge 0,$ all $\mu\in \rho(A)$ and $f\in C([0,T],D(\calA))$, where  $$g_\mu:= (I+\D_\mu K)f+ \mu Kz(\cdot).$$
By assumption there exists $c_T>0$ such that
\begin{align*}
\left\|\mathscr{R}\varphi \right\|_{L^p([0,T],X)}\le c_T\,\|\varphi\|_{L^p([0,T],X)},\qquad \varphi\in {L^p([0,T],X)}.
\end{align*}
Let $\om>\max\{\om_0(A),\om_0(\calA)\}$ and choose and fix $\mu>\om+(2c_T \kappa \|K\|)^p,$ where the constant $\kappa>0$ is given in \eqref{Conj-Weiss-B}.
 Then we have
 \begin{align}\label{est-6}
 \|\D_\mu K\|\le \frac{1}{2c_T}\quad.
 \end{align}
Now using \eqref{est-6}, \eqref{DS-form-mild0} and H\"{o}lder inequality, we obtain
\begin{align}\label{g-mu-est}
\|g_\mu\|_{L^p([0,T],X)} \le \left(1+\frac{1}{2c_T}+ T|\mu|\|K\| e^{|\om|T})\right)\, \|f\|_{L^p([0,T],X)}:=\tilde{c}_T \, \|f\|_{L^p([0,T],X)}.
\end{align}
Now we define the operator
\begin{align*}
(\mathscr{F}^\mu g)(t)= \mathscr{R} (\D_\mu K g)(t),
\end{align*}
for any $t\ge 0$ and measurable functions $g:[0,T]\to X$.  Using \eqref{EEE}, we obtain
\begin{align}\label{final-ident}
(I-\mathscr{F}^\mu)\mathscr{R}^{cl}f= \mathscr{R} g_\mu\quad.
\end{align}
Remark that the restriction of  $I-\mathscr{F}^\mu$ to $L^p([0,T],X)$ is invertible, since by \eqref{est-6}, we have
\begin{align*}
\|\mathscr{F}^\mu g\|_{L^p([0,T],X)}\le \frac{1}{2} \|g\|_{L^p([0,T],X)}.
\end{align*}
Now as $\mathscr{R} g_\mu\in L^p([0,T],X)$ then by \eqref{final-ident}, we have $\mathscr{R}^{cl}f\in L^p([0,T],X)$ and
\begin{align*}
\mathscr{R}^{cl}f=\left(I-\mathscr{F}^\mu\right)^{-1}\mathscr{R} g_\mu.
\end{align*}
Finally, using \eqref{g-mu-est}, we obtain
\begin{align*}
\|\mathscr{R}^{cl}f\|_{L^p([0,T],X)}\le 2 c_T \tilde{c}_T \|f\|_{L^p([0,T],X)},\qquad f\in \calC([0,T], D(\calA)).
\end{align*}
The required result now follows by density.

\end{proof}

\begin{rem}\label{comparaison-DS-KW} In \cite[Rem.11]{KunWei}, the authors showed that if $A$ has a maximal $L^p$-regularity on a UMD space $X$ and a perturbation $P:X\to X_{-1}$ satisfies $\|(-A_{-1})^{-1}P\|\le \eta$ with $\eta$ small in some sense (see condition (7) in  \cite{KunWei}), then the part of $A_{-1}+P$ on $X$ has also the maximal $L^p$--regularity on $X$. The UMD property is an essential condition in \cite{KunWei} due to a Weis' perturbation theorem \cite{LutzWeis2}.  In our case, $X$ is a general Banach space (not necessarily UMD). However, instead of the above condition on $P$ we have assumed  that the operator $P=BK$ is $p$--admissible control operator for $A$ (which is the case when $B$ is so). This condition together with   \eqref{Conj-Weiss-B} easily imply the condition (7) in \cite{KunWei}.
\end{rem}
We now state the result giving the maximal $L^p$-regularity for the systems \eqref{PM} (or equivalently for the equation \eqref{calA-max}) in the case when $K\in\calL(X,U)$. This is equivalent to the
\begin{thm}\label{max-reg-DS+MV}
Let $X,Z,U$ be Banach spaces such that $Z\subset X$ (with continuous and dense embedding ), $p\in (1,\infty)$ and consider the evolution equation \eqref{PM} with bounded boundary perturbation operator $K\in\calL(X,U)$. Assume that the conditions {\bf(H1)} to {\bf(H4)} are satisfied. Moreover, we assume that the triple $(A,B,\P)$ generates a regular linear system on $X,U,X$ where $\P$ is the restriction of $P$ to $D(A)$. Then the operator $(\calA+P,D(\calA))$ is the generator of a strongly continuous semigroup on $X$. Moreover, if $A\in \mathscr{MR}(0,T;X)$ then $\calA+P \in \mathscr{MR}(0,T;X)$ (and hence the evolution equation \eqref{PM}  has the maximal $L^p$-regularity).
\end{thm}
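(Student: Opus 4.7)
The strategy is to obtain the statement by combining three results already established in the paper, applied in sequence. The key observation is that the perturbed operator $\calA+P$ can be viewed as a Miyadera--Voigt type perturbation of $\calA$, so once I know that $\calA$ itself has maximal $L^p$-regularity, I can transfer the property to $\calA+P$ using \thmref{theorem:Miyadera}.

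First, I would check that \thmref{Desch_SchapThm} applies. Assumptions \textbf{(H1)}--\textbf{(H4)} are among the hypotheses, and the boundedness $K\in\calL(X,U)$ together with the $p$-admissibility of $B$ yields, via \remref{Hadd_thm}(ii), that $(A,B,K)$ generates a regular linear system on $X,U,U$ with $I_U$ as an admissible feedback. Hence from $A\in\mathscr{MR}(0,T;X)$ I conclude
\begin{equation*}
\calA\in\mathscr{MR}(0,T;X),
\end{equation*}
where $\calA$ is the operator defined in \eqref{cald-def}, and by \thmref{theorem:Hadd_Manzo_Rhandi} it generates a $C_0$-semigroup $\T^{cl}$ on $X$.

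Second, I invoke \thmref{theorem:generalisation_of_Hadd_Manzo_Rhandi}. The additional hypothesis that $(A,B,\P)$ generates a regular linear system on $X,U,X$ is exactly what is assumed. Thus $\P$ -- and therefore $P$ as an element of $\calL(D(\calA),X)$ -- is a $p$-admissible observation operator for $\calA$, and $(\calA+P, D(\calA))$ generates a strongly continuous semigroup on $X$. This gives the first half of the conclusion.

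Third, I apply \thmref{theorem:Miyadera} with the pair $(\calA, P)$ in place of $(A,\P)$. Both hypotheses of that theorem are now in place: $P$ is $p$-admissible observation for $\calA$ (step two), and $\calA\in\mathscr{MR}(0,T;X)$ (step one). The conclusion is
\begin{equation*}
\calA+P\in\mathscr{MR}(0,T;X),
\end{equation*}
which is precisely the maximal $L^p$-regularity of \eqref{PM}. The argument is therefore a clean two-step reduction: a Desch--Schappacher step to absorb the boundary coupling through $K$, followed by a Miyadera--Voigt step to absorb the interior perturbation $P$; there is no substantive new obstacle to overcome beyond verifying that the hypotheses of the already proved theorems line up, which they do by construction.
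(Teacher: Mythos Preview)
Your proposal is correct and follows essentially the same approach as the paper's own proof: first invoke Theorem~\ref{Desch_SchapThm} to obtain $\calA\in\mathscr{MR}(0,T;X)$, then use Theorem~\ref{theorem:generalisation_of_Hadd_Manzo_Rhandi} to get both the generation of $\calA+P$ and the $p$-admissibility of $P$ for $\calA$, and finally apply Theorem~\ref{theorem:Miyadera} to conclude $\calA+P\in\mathscr{MR}(0,T;X)$. The paper's proof is only a few lines and cites exactly these three results in the same logical order.
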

\begin{proof}
The fact that $(\calA+P,D(\calA))$ is a generator on $X$  is already proved in Theorem \ref{theorem:generalisation_of_Hadd_Manzo_Rhandi}. Now if $A\in \mathscr{MR}(0,T;X)$, then $\calA\in \mathscr{MR}(0,T;X)$, by Theorem \ref{Desch_SchapThm}. On the other hand, Theorem \ref{theorem:generalisation_of_Hadd_Manzo_Rhandi} shows that $P$ is $p$-admissible observation operator for $\calA$. So, thanks to Theorem \ref{theorem:Miyadera} we also have  $\calA+P \in \mathscr{MR}(0,T;X)$.
\end{proof}

\subsection{Staffans-Weiss perturbation}\label{sub:4.3}
In this part, we study maximal $L^p$--regularity for the boundary perturbed equation \eqref{PM} in the general case when the boundary perturbation $K$ is unbounded. We then assume, as in the previous part of this paper, that {\bf(H1)} and {\bf(H2)} are satisfied. In addition we suppose the following condition
\begin{itemize}
\item[] \textbf{(H3)'} $K:Z\to U$ is linear bounded (i.e. $K\in\calL(Z,U)$).
\end{itemize}
On the other hand, let $B$ and $C$ as in \eqref{B_op}. We shall also consider the following assumption
\begin{itemize}
\item[] \textbf{(H4)'} the triple $(A,B,C)$ generates a regular linear system on $X,U,U$ with $I_U:U\to U$ as an admissible feedback operator.
\end{itemize}
\begin{thm}\label{theorem:staffans-weiss-analytic}
Let $X,Z,U$ be Banach spaces such that $Z\subset X$ (with continuous and dense embedding) and let conditions {\bf(H1)}, {\bf(H2)}, ${ \bf (H3)}'$ and ${ \bf (H4)}'$ be satisfied. Then the operator $(\calA,D(\calA))$ defined by \eqref{cald-def} generates a strongly continuous semigroup which is analytic whenever the semigroup generated by $A$ is.
\end{thm}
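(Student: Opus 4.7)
The generation of a $C_0$-semigroup by $(\calA,D(\calA))$ under $\mathbf{(H1)}$, $\mathbf{(H2)}$, $\mathbf{(H4)'}$ is already contained in Theorem \ref{theorem:Hadd_Manzo_Rhandi}(ii), which identifies $\calA$ with the Staffans--Weiss closed-loop operator associated to $(A,B,C)$. Hence the novel content is the analyticity claim, and my plan is to deduce sectoriality of $\calA$ from that of $A$ via the resolvent formula of Theorem \ref{theorem:Hadd_Manzo_Rhandi}(iv).

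Assume $\T$ is analytic, so there exist $\omega_0\in\R$ and $\theta_0>\pi/2$ with $\omega_0+\Sigma_{\theta_0}\subset\rho(A)$ and $\|R(\la,A)\|\le c/|\la-\omega_0|$ on this sector. Applying the algebraic identity $(I-\D_\la K)^{-1}=I+\D_\la (I-K\D_\la)^{-1}K$ to the formula $R(\la,\calA)=(I-\D_\la K)^{-1}R(\la,A)$ of Theorem \ref{theorem:Hadd_Manzo_Rhandi}(iv), and using $K\,|_{D(A)}=C$ together with \eqref{transfer-function-ABC}, I would rewrite
\[
R(\la,\calA) \;=\; R(\la,A)+\D_\la (I-H(\la))^{-1} C R(\la,A),
\]
where $H(\la)=K\D_\la$ is the transfer function of $(A,B,C)$. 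The $p$-admissibility of $B$ and $C$ built into $\mathbf{(H4)'}$ supplies the standard estimates
\[
\|\D_\la\|_{\calL(U,X)}\le \frac{\kappa}{(\mathrm{Re}\,\la-\omega_0)^{1/p}},\qquad \|C R(\la,A)\|_{\calL(X,U)}\le \frac{M}{(\mathrm{Re}\,\la-\omega_0)^{1/q}}
\]
on a right half-plane, and these extend to any slightly narrower subsector $\omega_0+\Sigma_{\theta'}$ with $\pi/2<\theta'<\theta_0$ by analyticity, giving $\|\D_\la\|\,\|CR(\la,A)\|\le C'/|\la-\omega_0|$ on that subsector. Thus, provided $\|(I-H(\la))^{-1}\|$ can be controlled on the same subsector, the three pieces combine to the desired sector estimate $\|R(\la,\calA)\|\le C/|\la-\omega_0|$, and Theorem \ref{theorem:Hadd_Manzo_Rhandi}(iii) confirms that the sector lies in $\rho(\calA)$, so $\calA$ is sectorial and $(\T^{cl}(t))_{t\ge 0}$ is analytic.

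The main obstacle is precisely the uniform control of $\|(I-H(\la))^{-1}\|$ on a sector of opening exceeding $\pi/2$. The admissible-feedback condition in $\mathbf{(H4)'}$ only guarantees that $(I-H(\la))^{-1}$ is bounded on a right half-plane $\C_{\omega_1}$, whereas the target sector protrudes into $\{\mathrm{Re}\,\la<\omega_1\}$. My strategy here is to combine the analyticity of $\la\mapsto H(\la)$ on $\rho(A)$, the strictly-proper limit $\lim_{s\to+\infty}H(s)=0$ from \eqref{limit-trans-func}, and the sectorial admissibility estimates above to show that $\|H(\la)\|\to 0$ as $|\la|\to\infty$ inside any slightly smaller subsector; for such $\la$ with $\|H(\la)\|\le 1/2$ a Neumann series gives $\|(I-H(\la))^{-1}\|\le 2$, while the remaining compact portion of the sector is handled by continuity of $\la\mapsto H(\la)$ together with Theorem \ref{theorem:Hadd_Manzo_Rhandi}(iii). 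Putting these pieces together yields the required sectoriality of $\calA$.
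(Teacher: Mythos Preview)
Your overall scheme---resolvent identity from Theorem~\ref{theorem:Hadd_Manzo_Rhandi}(iv), estimate $\D_\la$ and $CR(\la,A)$ via $p$-admissibility, then control $(I-H(\la))^{-1}$ separately---is exactly the paper's. The divergence is that you insist on a genuine sector estimate, whereas the paper stays on a right half-plane and invokes the characterization of analyticity by $\|R(\la,\calA)\|\le \tilde M/|\la-\omega|$ on $\C_\omega$ (as in \cite[Thm.~12.31]{ReRo}). On a half-plane the obstacle you identify simply disappears: by Theorem~\ref{staff_wei} the closed-loop triple $(\calA,B,C_\Lambda)$ is again a regular linear system, so its transfer function $H^{cl}$ is bounded on any $\C_\alpha$ with $\alpha>\omega_0(\calA)$, and the algebraic identity $(I-H(\la))^{-1}=I+H^{cl}(\la)$ gives $\sup_{\C_\alpha}\|(I-H(\la))^{-1}\|<\infty$ for free. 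The only place a sector enters in the paper's argument is in upgrading the admissibility bounds from powers of $\mathrm{Re}\,\la$ to powers of $|\la|$ on $\C_0$, using sectoriality of $A$; the final resolvent bound for $\calA$ is then assembled on a half-plane $\C_{\omega_2}$.

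Your sector route, by contrast, has a real gap in the last paragraph. The claim $\|H(\la)\|\to 0$ as $|\la|\to\infty$ inside a subsector does not follow from the ingredients you list: the limit \eqref{limit-trans-func} is only along the positive real axis, and the sectorial admissibility estimates control $\|\D_\la\|_{\calL(U,X)}$ and $\|CR(\la,A)\|_{\calL(X,U)}$ separately but say nothing about $H(\la)=C_\Lambda\D_\la$, since $C_\Lambda$ is unbounded on $X$. Moreover, the appeal to Theorem~\ref{theorem:Hadd_Manzo_Rhandi}(iii) for the ``compact portion'' is circular: that statement is a biconditional, so it does not furnish $1\in\rho(H(\la))$ unless you already know $\la\in\rho(\calA)$, which is precisely what you are trying to establish. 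Both issues are bypassed by staying on a half-plane as the paper does.
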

\begin{proof}
According to Theorem \ref{theorem:Hadd_Manzo_Rhandi} (i) $\calA$ is a generator of a strongly continuous semigroup $\T^{cl}:=(\T^{cl}(t))_{t\ge 0}$ on $X$. Now assume that $A$ generates an analytic semigroup $\T$ on $X$. Then there exist constants $\beta\in \mathbb{R}$ and $M_1>0$ such that $\mathbb{C}_\beta\subset \rho(A)$ and
\begin{equation}\label{analytic-A-estim}
\left\|R(\lambda,A)\right\| \leq \frac{M_1}{|\lambda-\beta|} ,\qquad \lambda\in \mathbb{C}_\beta
\end{equation}
On the other hand, let us prove that the admissibility of $B$ and $C$ for $A,$ imply that
\begin{align}\label{Good-estim-Analy}
M_2:=\sup_{z\in\mathbb{C}_0}\vert z\vert^\frac{1}{q} \Vert CR(z,A) \Vert < +\infty\quad\text{and}\quad M_3:=\sup_{z\in\mathbb{C}_0}\vert z\vert^\frac{1}{p} \Vert R(z,A_{-1})B \Vert < +\infty,
\end{align}
where $\frac{1}{p}+\frac{1}{q}=1$. In fact,  we will give a slight modification of the proof  given in \cite[lem.1.6]{BouDriElm}. Since $A$ generates a bounded analytic semigroup there exist $\omega \in ]\frac{\pi}{2},\pi[$ such that $\sigma(A) \subset \mathbb{C}\backslash \overline{\Sigma}_\omega$. Let $\gamma \in ]\frac{\pi}{2},\omega[$ and $\Gamma$ the path defined by
$$
\Gamma = \{re^{\pm i \gamma},r>0 \}
$$
We can see easily that $\frac{\vert Re z\vert}{\vert z\vert} = \sin\gamma$. By virtue of the resolvent equation and
using the analyticity of the semigroup, we obtain:
$$
\vert z\vert^\frac{1}{q} \Vert CR(z,A) \Vert \leq M_\Gamma \sup_{s\in\mathbb{C}_0} ({\rm Re}(s))^\frac{1}{q} \Vert CR(s,A) \Vert, z\in \Gamma
$$
for some constant $M_\Gamma>0$. Since $C$ is $p$-admissible for $A$, we have
$$
\sup_{s\in\mathbb{C}_0} ({\rm Re}(s))^\frac{1}{q} \Vert CR(s,A) \Vert < +\infty
$$
and thus:
$$
\sup_{z\in\Gamma}\vert z\vert^\frac{1}{q} \Vert CR(z,A) \Vert < +\infty
$$
The application
\begin{align*}
\varphi : \Sigma_\omega &\rightarrow \mathcal{L}(X,Y)\\
z & \mapsto z^\frac{1}{q} CR(z,A)
\end{align*}
is analytic, then for all $z\in \mathbb{C}_0$ we have
$$
z^\frac{1}{q} CR(z,A) = \frac{1}{2\pi i} \int_\Gamma \frac{u^\frac{1}{q}CR(u,A)}{u-z}du
$$
Since $\varphi$ is bounded on $\Gamma$, then it is bounded on $\mathbb{C}_0$ and this is what we want. The other estimation is obtained by the same arguments. Let $\om_1:=\max\{\om_0(A),\om_0(\calA)\}$. From Theorem \ref{theorem:Hadd_Manzo_Rhandi} (ii) we know that for any $\la\in\C_{\om_1},$ we have \begin{align}\label{Resolvent-calA}
R(\la,\calA)&=(I-\D_\la K)^{-1}R(\la,A)\cr &= R(\la,A)+R(\la,A_{-1})B (I_U-C_\Lambda \D_\la)^{-1}CR(\la,A).
\end{align}
On the other hand, $(I_U-C_\Lambda \D_\la)^{-1}=I+\mathscr{H}^{cl}(\la),$ where $\mathscr{H}^{cl}$ is the transfer function of the (closed-loop) regular linear system generated by $(\calA,B,C_\Lambda)$. Hence there exists $\al>\om_0(\calA)$ such that
\begin{align}\label{transfer-estim}
\nu:=\sup_{\C_\al}\left\|(I_U-C_\Lambda \D_\la)^{-1}\right\|<\infty.
\end{align}
Now let $\om_2:=\max\{0,\alpha,\beta,\om_1\}$. Then by using \eqref{analytic-A-estim}, \eqref{Good-estim-Analy}, \eqref{Resolvent-calA} and \eqref{transfer-estim}, we obtain
\begin{align*}
\|R(\la,A)\|\le \frac{\tilde{M}}{|\la-\om_2|}
\end{align*}
for all $\la\in\C_{\om_2}$, where $\tilde{M}=M_{1}+\nu M_{2}M_{3}$.
 \end{proof}

%

The following result shows the maximal regularity of the perturbed boundary value problem \eqref{PM} in the case of $P\equiv 0$.

\begin{thm}\label{theorem:Staffans_Weiss_Non_Reflexive_Case}
Let $X,Z,U$ be Banach spaces such that $Z\subset X$ (with continuous and dense embedding) and let conditions {\bf(H1)}, {\bf(H2)}, ${ \bf (H3)}'$ and ${ \bf (H4)}'$ be satisfied and let $p\in (1,\infty)$. Let the operator $(\calA,D(\calA))$ defined by \eqref{cald-def}. Assume additionally that there exists $\lambda_0 \in \mathbb{R}$ such that
\begin{align}\label{kappa-0}
\kappa_0:=\sup_{Re\lambda>\lambda_0}\Vert \lambda \mathbb{D}_\lambda \Vert < +\infty
\end{align} If $A\in \mathscr{MR}(0,T;X)$ then $\calA\in \mathscr{MR}(0,T;X)$.
\end{thm}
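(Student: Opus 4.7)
The plan is to mirror the proof of Theorem \ref{Desch_SchapThm}, replacing the bounded boundary operator $K\in\calL(X,U)$ by its Yosida extension $C_\Lambda$ (an admissible observation operator for $A$), and letting the hypothesis $\kappa_0:=\sup_{\mathrm{Re}\,\la>\la_0}\|\la\D_\la\|<+\infty$ serve as the quantitative substitute for the uniform bound on $K$ used in the Desch-Schappacher case. Since $A\in\mathscr{MR}(0,T;X)$ forces $\T$ to be analytic, Theorem \ref{theorem:staffans-weiss-analytic} immediately yields analyticity of $\T^{cl}$, settling the necessary condition. By Remark \ref{remark:Maximal_Regularity_Operator}(i), it then suffices to prove that the operator $(\mathscr{R}^{cl}f)(t):=\calA\int_0^t\T^{cl}(t-s)f(s)\,ds$, initially defined on $\calC([0,T],D(\calA))$, extends to a bounded operator on $L^p([0,T],X)$.

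For smooth $f$, I would write $z(t)=\int_0^t\T^{cl}(t-s)f(s)\,ds$ and $v:=C_\Lambda z$. Proposition \ref{VCF-calA-f} supplies the splitting
\begin{align*}
z(t)=\int_0^t\T(t-s)f(s)\,ds+\int_0^t\T_{-1}(t-s)Bv(s)\,ds=:z_1(t)+z_2(t),
\end{align*}
together with the key estimate $\|v\|_{L^p([0,T],U)}\le c_T\|f\|_{L^p([0,T],X)}$. The first piece contributes $Az_1=\mathscr{R}f$, bounded in $L^p$ by the maximal regularity of $A$. Using $B=(\mu-A_{-1})\D_\mu$, an integration by parts in the $s$-variable, and the identity $A_mz_2=A(z_2-\D_\mu v)+\mu\D_\mu v$ (which follows from $G z_2=v$, so that $z_2-\D_\mu v\in\ker G=D(A)$, combined with $A_m\D_\mu=\mu\D_\mu$), I arrive at the core representation
\begin{align*}
\mathscr{R}^{cl}f(t)=\mathscr{R}f(t)+\mu\D_\mu v(t)+\mu\mathscr{R}(\D_\mu v)(t)-\mathscr{R}(\D_\mu\dot v)(t).
\end{align*}
Substituting $\dot v=C_\Lambda(\mathscr{R}^{cl}f+f)$ (legitimate for classical solutions, since $\calA z(t)\in D(\calA)\subset Z\subset D(C_\Lambda)$ by \eqref{relation-K-C-Lambda}) converts this into a fixed-point equation $(I+\mathscr{F}^\mu)\mathscr{R}^{cl}f=G_\mu$ with $\mathscr{F}^\mu\varphi:=\mathscr{R}(\D_\mu C_\Lambda\varphi)$ and a right-hand side $G_\mu=\mathscr{R}f+\mu\D_\mu v+\mu\mathscr{R}(\D_\mu v)-\mathscr{R}(\D_\mu C_\Lambda f)$, each term of which is controlled in $L^p$ by $\|f\|_{L^p}$ through the hypothesis $\|\mu\D_\mu\|\le\kappa_0$, the admissibility bound on $v$, and the $L^p$-boundedness of $\mathscr{R}$.

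To close the estimate I would choose $\mu$ large enough so that $\|\mathscr{F}^\mu\|_{L^p([0,T],X)\to L^p([0,T],X)}\le\tfrac12$: this is feasible because the operator norm decays with $\mu$ via the bound $\|\D_\mu\|_{U\to X}\le\kappa_0/\mu$ together with the $p$-admissibility of $C_\Lambda$ for $A$ (in the same spirit as the estimates $\|CR(\mu,A)\|\le M/(\mathrm{Re}\,\mu)^{1/q}$ established in the proof of Theorem \ref{theorem:staffans-weiss-analytic}). Then $I+\mathscr{F}^\mu$ is invertible by a Neumann series, yielding $\|\mathscr{R}^{cl}f\|_{L^p}\le C\|f\|_{L^p}$ uniformly for smooth $f$; density of $\calC([0,T],D(\calA))$ in $L^p([0,T],X)$ transfers the bound and delivers $\calA\in\mathscr{MR}(0,T;X)$. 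The main obstacle is the rigorous definition of $\mathscr{F}^\mu$: because $C_\Lambda$ is unbounded on $X$, the composition $\D_\mu C_\Lambda$ is \emph{a priori} only defined on the dense subspace $D(C_\Lambda)$, so $\mathscr{F}^\mu\varphi$ cannot be interpreted pointwise for arbitrary $\varphi\in L^p([0,T],X)$. The remedy is to keep the entire iteration inside the smooth category, where $\mathscr{R}^{cl}f(t)\in D(\calA)\subset D(C_\Lambda)$ automatically, establish the uniform $L^p$-bound there, and extend by density; it is precisely the hypothesis $\|\mu\D_\mu\|\le\kappa_0$ that allows one to absorb $\mathscr{F}^\mu$ by taking $\mu$ large, a step that is automatic in the bounded-$K$ case of Theorem \ref{Desch_SchapThm} but requires a quantitative Dirichlet-operator bound in the genuinely unbounded Staffans-Weiss setting.
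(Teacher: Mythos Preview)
Your approach has a genuine gap at the contraction step. You aim to show that $\mathscr{F}^\mu\varphi:=\mathscr{R}(\D_\mu C_\Lambda\varphi)$ satisfies $\|\mathscr{F}^\mu\|_{L^p([0,T],X)\to L^p([0,T],X)}\le\tfrac12$ for $\mu$ large. The only estimate available is $\|\mathscr{F}^\mu\varphi\|_{L^p}\le c_T\|\D_\mu\|\,\|C_\Lambda\varphi\|_{L^p([0,T],U)}$, and while $\|\D_\mu\|\le\kappa_0/\mu$ indeed decays, the factor $\|C_\Lambda\varphi\|_{L^p(U)}$ cannot be bounded by $\|\varphi\|_{L^p(X)}$: that would force $C_\Lambda$ to be bounded on $X$, which is exactly what fails in the Staffans--Weiss setting. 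The $p$-admissibility of $C$ gives bounds of the form $\|C\T(\cdot)x\|_{L^p}\le c\|x\|$ or $\|CR(\mu,A)\|\le M(\mathrm{Re}\,\mu)^{-1/q}$, but these involve $C$ composed with the semigroup or resolvent \emph{of $A$}, not $C_\Lambda$ acting pointwise on an arbitrary $X$-valued function. Restricting to smooth $\varphi$ does not help: the bound you need is in the $L^p(X)$-norm of $\varphi$, and density transfers only uniform bounds. The same problem infects the right-hand side $G_\mu$, where the term $\mathscr{R}(\D_\mu C_\Lambda f)$ is likewise uncontrolled by $\|f\|_{L^p(X)}$. (Incidentally, your claim $\mathscr{R}^{cl}f(t)=\calA z(t)\in D(\calA)$ is also unjustified: $z(t)\in D(\calA)$ gives only $\calA z(t)\in X$.)

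The paper circumvents this by an entirely different device: Yosida approximation. One replaces $\calA$ by the bounded operators $\calA_n=n\calA R(n,\calA)$, writes $\mathscr{R}^{cl}_nf:=\calA_n z$, and expands $\calA_n$ via the resolvent identity \eqref{Resolvent-calA} as $\calA_n=nAR(n,A)+n^2\D_n(I-C_\Lambda\D_n)^{-1}CR(n,A)$. Inserting the splitting $z=\int_0^t\T(t-s)f\,ds+\int_0^t\T_{-1}(t-s)BC_\Lambda z\,ds$ from Proposition~\ref{VCF-calA-f} produces four terms, each of which is estimated uniformly in $n$: the crucial point is that $C$ now appears only in the combinations $C\int_0^t\T(t-s)(\cdot)\,ds$ (bounded by Lemma~\ref{Hadd_lemma}) and $n\D_n$ (bounded by $\kappa_0$), never as a naked $C_\Lambda$ on $X$. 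One then passes to the limit $n\to\infty$ via Fatou's lemma. This is precisely the mechanism that converts the hypothesis $\sup\|\la\D_\la\|<\infty$ into a usable uniform bound, and there is no Neumann-series inversion at all.
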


\begin{proof} As $A\in \mathscr{MR}(0,T;X)$, there exists $\mathscr{R}\in \calL(L^p([0,T],X))$ such that
\begin{align*}
(\mathscr{R}f)(t)=A\int^t_0 \T(t-s)f(s)ds,
\end{align*}
for all $f\in L^p([0,T],X)$ and a.e $t\ge 0$. By Theorem \ref{theorem:staffans-weiss-analytic}, $\calA$ generates an analytic semigroup $\T^{cl}$ on $X$. We then can define the following operator
$$
(\mathscr{R}^{cl}f)(t):=\calA\int_0^t \sgT^{cl}(t-s)f(s)ds \qquad \qquad f\in C([0,T];D(\calA)).
$$
Our objective is to show that  the operator $\mathscr{R}^{cl}$ admits a bounded extension on $L^p([0,T];X)$. On the other hand, we define the Yosida approximation operators of $\calA$ by
\begin{align*}
\calA_n := n\calA R(n,\calA_n)= n^2 R(n,\calA_n) -nI,
\end{align*}
for any $n\in\N$ such that $n>\om_0(\calA)$. From  \eqref{Resolvent-calA} and for any sufficiently  integer $n$,
one can write
\begin{align}\label{calAn}
\calA_n = nAR(n,A) +n^2 \mathbb{D}_n(I-C_\wedge \mathbb{D}_n)^{-1}CR(n,A)\quad .
\end{align}
We also set
$$
(\mathscr{R}^{cl}_nf)(t):=\calA_n\int_0^t \sgT^{cl}(t-s)f(s)ds,
$$
for $f\in C([0,T];D(\calA))$ and $n\in\N$ such that $n>\om_0(\calA)$. We have (see \cite{EngNag}),
\begin{align*}
(\mathscr{R}^{cl}f)(t)=\lim_{n\to\infty} (\mathscr{R}^{cl}_nf)(t),
\end{align*}
 for every $t\in [0,T]$.  Using Proposition \ref{VCF-calA-f}, we have
\begin{align*}
(\mathscr{R}^{cl}_nf)(t)=\calA_n \left(\int^t_0 \T(t-s)f(s)ds+\int^t_0 \T_{-1}(t-s)C_\Lambda z(s)de\right),
\end{align*}
where
\begin{align*}
z(t)=\int^t_0 \T^{cl}(t-s)f(s)ds,\qquad t\ge 0.
\end{align*}
Using  \eqref{calAn}, for large $n$,
\begin{align*}
(\mathscr{R}^{cl}_n f)(t) &=  nAR(n,A)\int_0^t \sgT(t-s)f(s)ds\\
&+n^2 \mathbb{D}_n(I-C_\Lambda \mathbb{D}_n)^{-1}CR(n,A)\int_0^t \sgT(t-s)f(s)ds  \\
&+nAR(n,A)\int_0^t \sgT_{-1}(t-s)BC_\Lambda z(s)ds\\
&+n^2 \mathbb{D}_n(I-C_\Lambda \mathbb{D}_n)^{-1}CR(n,A)\int_0^t \sgT_{-1}(t-s)BC_\Lambda z(s)ds\\
&:=I_n^1(t) + I_n^2(t) + I_n^3(t) + I_n^4(t).
\end{align*}
We have
\begin{align*}
\int_0^T \Vert I_n^1(t) \Vert^p dt &=\left\|\mathscr{R}( nR(n,A)f ) \right\|_{L^p([0,T],X)}^p\\
&\leq c_T \left\| f \right\|_{(L^p[0,T],X)}^p,
\end{align*}
for a constant $c_T>0$ independent of $f$. On the other hand,
\begin{align*}
\int_0^T \Vert I_n^2(t) \Vert^p dt &=\int_0^T \Vert n \mathbb{D}_n(I-C_\wedge \mathbb{D}_n)^{-1}CnR(n,A)\int_0^t \sgT(t-s)f(s)ds \Vert^p dt\\ & \le (\kappa \nu)^p \int_0^T \Vert  C\int_0^t \sgT(t-s)nR(n,A)f(s)ds \Vert^p dt\\
&\leq (\kappa \nu \ga_T)^p \Vert f \Vert_{L^p}^p:=c_{T,1}  \Vert f \Vert_{L^p}^p,
\end{align*}
due to \eqref{transfer-estim}, \eqref{kappa-0} and  Lemma \ref{Hadd_lemma}, where $c_{T,1}$ is a constant independent of $f$. We estimate $I^3_n(t)$ by
\begin{align*}
\int_0^T \Vert I_n^3(t) \Vert^p dt &=\int_0^T \Vert A\int_0^t \sgT(t-s)n\D_n C_\Lambda z(s)ds \Vert^p dt\\
&=\left\|\mathscr{R}(n\D_n C_\Lambda z(s)) \right\|_{L^p([0,T],X)}^p\\ & \le c_T \kappa_0^p \left\| C_\Lambda z(s ) \right\|_{L^p([0,T],U)}^p \\
&\leq c_{T,2}   \Vert f \Vert_{L^p}^p,
\end{align*}
due to \eqref{kappa-0} and Proposition \ref{VCF-calA-f}, where $c_{T,2}>0$ is a constant independent of $f$. Similarly,
\begin{align*}
\int_0^T \Vert I_n^4(t) \Vert^p dt &=\int_0^T \Vert n \mathbb{D}_n(I-C_\Lambda \mathbb{D}_n)^{-1}C\int_0^t \sgT(t-s)nR(n,A)BC_\Lambda z(s)ds \Vert^p dt\\ &\le (\nu \kappa_0)^p\int_0^T \Vert  C\int_0^t \sgT(t-s)n \mathbb{D}_nC_\Lambda z(s)ds \Vert^p dt\\
&\leq c_{T,3} \Vert f \Vert_{L^p}^p,
\end{align*}
by \eqref{transfer-estim}, \eqref{kappa-0}, Lemma \ref{Hadd_lemma} and Proposition \ref{VCF-calA-f}, where $c_{T,3}>0$ is a constant independent of $f$.   Finally one can conclude that
\begin{align*}
\int_0^T \Vert (\mathscr{R}_n^{cl} f)(t) \Vert^p dt \leq \tilde{C_p} \Vert f \Vert_{L^p}^p,
\end{align*}
 for some constant $\tilde{C_p}>0$ depending on $p$ and independent of $f$.\\
 Since $\Vert (\mathscr{R}_n^{cl} f)(t) \Vert^p \rightarrow \Vert (\mathscr{R}^{cl} f)(t) \Vert^p$ for all $t\in [0,T]$, we conclude by Fatou's lemma that
\begin{align*}
\int_0^T \Vert (\mathscr{R}^{cl} f)(t) \Vert^p dt &\leq  \liminf \int_0^T \Vert (\mathscr{R}_n^{cl} f)(t) \Vert^p dt\\
&\leq \tilde{C_p} \Vert f \Vert_{L^p}^p.
\end{align*}
Thus $\mathscr{R}^{cl}$ can be extended to a bounded operator on $L^p([0,T];X)$.
\end{proof}
\begin{rem} Here we will show that the result of Theorem \ref{theorem:Staffans_Weiss_Non_Reflexive_Case} holds only in non reflexive Banach spaces. To that purpose, we define, for $\alpha \in (0,1)$, the Favard space of order $\alpha$ associated to $A$ by
$$
F_\alpha^A := \left\lbrace x\in X: \sup_{t>0} \left\lVert\frac{\sgT(t)x-x}{t^\alpha}\right\rVert <\infty \right\rbrace
$$
with norm
$$
\Vert x \Vert_{F_\alpha^A}:= \sup_{t>0} \left\lVert\frac{\sgT(t)x-x}{t^\alpha}\right\rVert.
$$
Now the assumption $\sup_{Re\lambda>\lambda_0}\Vert \lambda R(\lambda,A_{-1})B \Vert < +\infty$ in the previous theorem implies that $range(B) \subset F_1^{A_{-1}}$ (see \cite[Remark 10]{MarBouFadHam}) and it is important to remark that the control operator $B$ is strictly unbounded, i.e. $range(B)\cap X = \{0\}$, since it comes from the boundary (see \cite{Sala}). These facts force us to work in non-reflexive Banach spaces, because if $X$ is reflexive, it is well known that $F_1^{A_{-1}}=X$, thus $range(B)\subset X$ and this cannot be true.
\end{rem}
Now we state the previous theorem in the case of $X$ being a  $GT$-space (e.g. if $X=L^1$ or $X=C(K)$, see for instance \cite{KalWei}), which is a non-reflexive space.
\begin{cor}
Let   $X,Z,U$ be Banach spaces such that $Z\subset X$ (with continuous and dense embedding) and let conditions {\bf(H1)}, {\bf(H2)}, ${ \bf (H3)}'$ and ${ \bf (H4)}'$ be satisfied such that either $X$ or $X^*$ is a $GT$-space.   Let the operator $(\calA,D(\calA))$ be defined by \eqref{cald-def}. Assume additionally that there exist $\lambda_0 \in \mathbb{R}$ such that
\begin{align*}
\kappa_0:=\sup_{Re\lambda>\lambda_0}\Vert \lambda \mathbb{D}_\lambda \Vert < +\infty
\end{align*}
If $A$ is an $H^\infty$-sectorial operator on $X$ with $\omega_H(A)<\frac{\pi}{2}$ then $\calA \in \mathscr{MR}(0,T;X)$.
\end{cor}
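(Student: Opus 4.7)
The plan is to reduce the statement to a direct application of Theorem \ref{theorem:Staffans_Weiss_Non_Reflexive_Case}. That theorem already tells us that under hypotheses \textbf{(H1)}, \textbf{(H2)}, $\textbf{(H3)}'$, $\textbf{(H4)}'$ together with the uniform Dirichlet estimate $\sup_{{\rm Re}\lambda>\lambda_0}\|\lambda \D_\lambda\|<\infty$, maximal $L^p$-regularity is transferred from $A$ to $\calA$. All of these assumptions are already built into the corollary, so the only thing to check is that $A$ itself belongs to $\mathscr{MR}(0,T;X)$.

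For this I would invoke the Kalton--Weis theorem on maximal regularity in GT-spaces (see \cite{KalWei}), which asserts that whenever either $X$ or $X^{*}$ is a GT-space and $A$ is $H^\infty$-sectorial with $\omega_H(A)<\tfrac{\pi}{2}$, the operator $A$ has the maximal $L^p$-regularity on $X$. This is exactly the combination of hypotheses appearing in the corollary, so we obtain $A\in \mathscr{MR}(0,T;X)$. Feeding this into Theorem \ref{theorem:Staffans_Weiss_Non_Reflexive_Case} immediately yields $\calA\in \mathscr{MR}(0,T;X)$.

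The proof amounts to chaining two already-available results, so there is no genuine technical obstacle; the only subtlety worth flagging is that GT-spaces such as $L^{1}$ or $C(K)$ are non-reflexive and in particular are not UMD, so Weis's $\calR$-boundedness characterization (Theorem \ref{theorem:Weis_Characterization}) cannot be used to produce maximal regularity of $A$. It is precisely for this reason that one must appeal to the specialized Kalton--Weis result, which exploits the GT-structure directly rather than going through $\calR$-sectoriality in a UMD setting. Once that black box is in place, the rest of the argument is mechanical.
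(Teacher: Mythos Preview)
Your proposal is correct and matches the paper's own proof essentially verbatim: the paper invokes Theorem 7.5 of \cite{KalWei} to obtain $A\in\mathscr{MR}(0,T;X)$ from the $H^\infty$-sectoriality and GT-space hypotheses, and then applies Theorem \ref{theorem:Staffans_Weiss_Non_Reflexive_Case} to conclude $\calA\in\mathscr{MR}(0,T;X)$. Your remark about why the Kalton--Weis result is needed in place of the UMD-based characterization is apt and goes slightly beyond what the paper makes explicit.
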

\begin{proof}
According the Theorem 7.5 in \cite{KalWei}, if $A$ is an $H^\infty$-sectorial operator on $X$ with $\omega_H(A)<\frac{\pi}{2}$ then $A$ has maximal $L^p$-regularity for all $1<p<\infty$, which implies by Theorem \ref{theorem:Staffans_Weiss_Non_Reflexive_Case} that $A^{cl} \in \mathscr{MR}(0,T;X)$.
\end{proof}

The next theorem present a perturbation result on UMD-spaces.
\begin{thm}\label{theorem:staffans-weiss-R-boundedness}
Let conditions {\bf(H1)}, {\bf(H2)}, ${ \bf (H3)}'$ and ${ \bf (H4)}'$ be satisfied with $X,U$ be UMD-spaces and $A$ generates a bounded analytic semigroup. Assume  that there exists $\om>\max\{\om_0(A);\om_0(\calA)\}$ such that the sets $\{s^\frac{1}{p} R(\om+is,A_{-1})B;s\neq 0\}$ and $\{s^\frac{1}{q} C R(\om+is,A);s\neq 0\}$ are $\mathcal{R}$-bounded. If $A\in \mathscr{MR}(0,T;X)$ then $\calA \in \mathscr{MR}(0,T;X)$.
\end{thm}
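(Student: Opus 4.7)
The plan is to invoke the Weis characterization of maximal $L^p$-regularity (Theorem \ref{theorem:Weis_Characterization}). By Theorem \ref{theorem:staffans-weiss-analytic}, $\calA$ generates an analytic semigroup on the UMD space $X$, and after a shift by a constant (harmless in view of Remark \ref{remark:Maximal_Regularity_Operator}(ii)) we may assume $\calA-\omega$ generates a \emph{bounded} analytic semigroup for the constant $\omega$ given in the hypothesis. It therefore suffices to establish $\calR$-boundedness of the family $\{sR(\omega+is,\calA):s\neq 0\}\subset\calL(X)$. I would then plug in the resolvent identity \eqref{Resolvent-calA} and split the task: the first summand $\{sR(\omega+is,A):s\neq 0\}$ is $\calR$-bounded since $A\in\mathscr{MR}(0,T;X)$ lives in a UMD space, again by Weis's theorem.

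For the remaining summand $sR(\omega+is,A_{-1})B(I_U-C_\Lambda\D_{\omega+is})^{-1}CR(\omega+is,A)$, I would factor $s=|s|^{1/p}\cdot|s|^{1/q}\cdot e^{i\arg s}$ (the unimodular scalar is absorbed without harm) and attach $|s|^{1/p}$ to $R(\omega+is,A_{-1})B$ and $|s|^{1/q}$ to $CR(\omega+is,A)$. The two outer factors then form $\calR$-bounded families by the standing hypothesis of the theorem. The main obstacle is therefore establishing the $\calR$-boundedness of the middle factor $\{(I_U-C_\Lambda\D_{\omega+is})^{-1}:s\neq 0\}$ in $\calL(U)$.

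To clear this obstacle I would pass to the closed-loop system: by Theorem \ref{staff_wei}, $(\calA,B,C_\Lambda)$ generates a regular linear system with transfer function $\mathscr{H}^{cl}(\lambda)=C_\Lambda R(\lambda,\calA_{-1})B$, and a direct computation (valid on the right half-plane where $I_U$ acts as admissible feedback, cf.\ \eqref{transfer-estim}) yields the algebraic identity $I_U+\mathscr{H}^{cl}(\lambda)=(I_U-C_\Lambda\D_\lambda)^{-1}$. Since $\calA-\omega$ generates a bounded analytic semigroup, Remark \ref{Transfer_Function_R-boundedness} applied to the shifted closed-loop system guarantees that $\{\mathscr{H}^{cl}(\omega+is):s\neq 0\}$ is $\calR$-bounded, and adding the identity preserves this. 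Then, using that the pointwise product of $\calR$-bounded operator families is $\calR$-bounded, the second summand contributes an $\calR$-bounded family; combined with the first summand, $\{sR(\omega+is,\calA):s\neq 0\}$ is $\calR$-bounded. Theorem \ref{theorem:Weis_Characterization} applied to $\calA-\omega$ together with Remark \ref{remark:Maximal_Regularity_Operator}(ii) then yield $\calA\in\mathscr{MR}(0,T;X)$.
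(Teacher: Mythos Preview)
Your proposal is correct and follows essentially the same approach as the paper's own proof: shift by $\omega$, invoke the resolvent identity \eqref{Resolvent-calA}, split $s=s^{1/p}s^{1/q}$ to handle the outer factors via the hypothesis, and treat the middle factor $(I_U-C_\Lambda\D_{\omega+is})^{-1}=I_U+\mathscr{H}^{cl}(\omega+is)$ via Remark \ref{Transfer_Function_R-boundedness} applied to the closed-loop system, then conclude by Theorem \ref{theorem:Weis_Characterization}. The paper carries this out by explicitly introducing the shifted operators $A^\omega=-\omega+A$ and $\calA^\omega=-\omega+\calA$ and verifying that $(A^\omega,B,C)$ is again regular with $I_U$ as admissible feedback and that $A^{cl,\omega}=\calA^\omega$, but the underlying argument is the same as yours.
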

\begin{proof}
Assume that $A\in \mathscr{MR}(0,T;X)$ and let $\om>\max\{\om_0(A);\om_0(\calA)\}$ such that the sets $\{s^\frac{1}{p} R(\om+is,A_{-1})B;s\neq 0\}$ and $\{s^\frac{1}{q} C R(\om+is,A);s\neq 0\}$ are $\mathcal{R}$-bounded. Denote $\calA^\om:=-\om+\calA$ and $A^\om:=-\om+A$ with domains $D(\calA^\om)=D(\calA)$ and $D(A^\om)=D(A),$ respectively. These operators are generators of analytic semigroups on $X$. We first observe that $A^\om\in \mathscr{MR}(0,T;X)$. To prove our theorem it suffice to show that $\calA^\om\in \mathscr{MR}(0,T;X)$. Clearly, $\om_0(A^\om)=\om_0(A)-\om<0$ and $\om_0(\calA^\om)=\om_0(\calA)-\om<0$, so   that $i\R\backslash\{0\}\subset \rho(A^\om)\cap\rho(\calA^\om)$. It is not difficult to show that $(A^\om,B,C)$ is also a regular linear system on $X,U,U$ with the identity operator $I_U:U\to U$ as an admissible feedback. Now according to Theorem \ref{staff_wei}, the following operator
\begin{align*}
A^{cl,\om}:=A^\om_{-1}+BC_\Lambda, \quad D(A^{cl,\om})=\left\{x\in D(C_\Lambda): (A^\om_{-1}+BC_\Lambda)x\in X\right\}.
\end{align*}
As $A^\om_{-1}=A_{-1}-\om$, then $D(A^{cl,\om})=D(\calA),$ and $A^{cl,\om}=\calA^\om$ due to Theorem \ref{theorem:Hadd_Manzo_Rhandi} (i).
As in \eqref{Resolvent-calA} we have
\begin{align}\label{formu-is}
\begin{split}
sR(is,\calA^\om) &= sR(is,A^\om) + s^\frac{1}{p}R(is,A_{-1}^\om)B(I-H^\om(is))^{-1} s^\frac{1}{q} CR(is,A^\om)\cr & =sR(is,A^\om) + s^\frac{1}{p}R(\om+is,A_{-1})B(I-H^\om(is))^{-1} s^\frac{1}{q} CR(\om+is,A),
\end{split}
\end{align}
where $H^\om(\la)=C_\Lambda R(\la,A_{-1}^\om)B,\;\la\in\rho(A)$, is the transfer function of the regular linear system generated by $(A^\om,B,C)$. Using the assumptions,  the equation \eqref{formu-is} and Theorem \ref{theorem:Weis_Characterization}, it suffice to show that the set $\{(I-H^\om(is))^{-1}:s\neq 0\}$ is $\mathcal{R}$-bounded. In fact, by Theorem \ref{theorem:Hadd_Manzo_Rhandi} (i) and  the condition ${ \bf (H4)}'$ the triple operator $(\calA^\om,B,C_\Lambda)$ generates a regular linear system with transfer function
\begin{align*}
H^{cl,\om}(is) &= (I-H^\om(is))^{-1}H^\om(is),\qquad s\neq 0,
\end{align*}
which implies that
\begin{align*}
(I-H^\om(is))^{-1}=I_U+H^{cl,\om}(is),\qquad s\neq 0.
\end{align*}
According to Remark \ref{Transfer_Function_R-boundedness}, the set $\{H^{cl,\om}(is):s\neq 0\}$ is $\mathcal{R}$-bounded. Hence $\{(I-H^\om(is))^{-1}:s\neq 0\}$ is $\mathcal{R}$-bounded. This ends the proof.
\end{proof}


\begin{prop}\label{corollary:staffans_weiss_R_boundedness}
Let conditions {\bf(H1)}, {\bf(H2)}, ${ \bf (H3)}'$ and ${ \bf (H4)}'$ be satisfied with $X,U$ be UMD-spaces and $A$ generates a bounded analytic semigroup. Assume that there exist  constants $\om>\max\{\om_0(A),\om_0(\calA)\}$ and $\alpha \in (\frac{1}{p},1)$  such that the set $\{s^\alpha R(\om+is,A_{-1})B;s\neq 0\}$ is $\mathcal{R}$-bounded. If $A\in \mathscr{MR}(0,T;X)$ then $\calA \in \mathscr{MR}(0,T;X)$.
\end{prop}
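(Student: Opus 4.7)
The plan is to mirror the proof of Theorem \ref{theorem:staffans-weiss-R-boundedness}, replacing the missing $\calR$-boundedness hypothesis on $C$ by a derivation through Lemma \ref{lemma:observation_fractional_power} and the analyticity of the semigroup. First, I would carry out the same translation by $\omega$: set $A^\omega:=A-\omega$ and $\calA^\omega:=\calA-\omega$ with $D(A^\omega)=D(A)$, $D(\calA^\omega)=D(\calA)$. Both are generators of bounded analytic semigroups on $X$, one has $\omega_0(A^\omega)<0$ and $\omega_0(\calA^\omega)<0$, and the triple $(A^\omega,B,C)$ is still a regular linear system on $X,U,U$ with $I_U$ as admissible feedback. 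Since $A\in \mathscr{MR}(0,T;X)$ is preserved under shifts, $A^\omega\in \mathscr{MR}(0,T;X)$, and it suffices (by Theorem \ref{theorem:Weis_Characterization}) to show that $\{sR(is,\calA^\omega):s\neq 0\}$ is $\calR$-bounded.

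Using the resolvent identity from Theorem \ref{theorem:Hadd_Manzo_Rhandi}(iv), exactly as in \eqref{formu-is}, and splitting $s=s^{\alpha}\cdot s^{1-\alpha}$, I obtain
\begin{align*}
sR(is,\calA^\omega)=sR(is,A^\omega)+\bigl[s^{\alpha}R(\omega+is,A_{-1})B\bigr]\bigl[(I-H^\omega(is))^{-1}\bigr]\bigl[s^{1-\alpha}CR(\omega+is,A)\bigr].
\end{align*}
The first summand is $\calR$-bounded by Theorem \ref{theorem:Weis_Characterization} applied to $A^\omega\in\mathscr{MR}$. The set $\{s^{\alpha}R(\omega+is,A_{-1})B\}$ is $\calR$-bounded by hypothesis. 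The middle factor $\{(I-H^\omega(is))^{-1}:s\neq 0\}$ is $\calR$-bounded by the same argument as in the proof of Theorem \ref{theorem:staffans-weiss-R-boundedness}: one writes $(I-H^\omega(is))^{-1}=I_U+H^{cl,\omega}(is)$, where $H^{cl,\omega}$ is the transfer function of the closed-loop regular system $(\calA^\omega,B,C_\Lambda)$, and appeals to Remark \ref{Transfer_Function_R-boundedness}. Multiplication of $\calR$-bounded sets yields $\calR$-boundedness of the product, so only the last factor remains.

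The new ingredient is to establish $\calR$-boundedness of $\{s^{1-\alpha}CR(\omega+is,A):s\neq 0\}=\{s^{1-\alpha}CR(is,A^\omega):s\neq 0\}$. Since $\alpha>\tfrac{1}{p}$, Lemma \ref{lemma:observation_fractional_power} (applied to the sectorial operator $A^\omega$ and the $p$-admissible observation operator $C$) shows that $C(-A^\omega)^{-\alpha}$ extends to a bounded operator from $X$ into $U$. On $D(A^\omega)$ we factor
\begin{align*}
s^{1-\alpha}CR(is,A^\omega)=\bigl[C(-A^\omega)^{-\alpha}\bigr]\bigl[s^{1-\alpha}(-A^\omega)^{\alpha}R(is,A^\omega)\bigr],
\end{align*}
so it suffices to prove that $\{s^{1-\alpha}(-A^\omega)^{\alpha}R(is,A^\omega):s\neq 0\}\subset\calL(X)$ is $\calR$-bounded. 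Since $A^\omega\in\mathscr{MR}$ on the UMD space $X$, $A^\omega$ is $\calR$-sectorial of angle strictly less than $\pi/2$. The family of functions $f_s(z):=s^{1-\alpha}(-z)^{\alpha}(is-z)^{-1}$, $s\in\R^*$, is uniformly bounded on any sector containing $\sigma(A^\omega)$ and avoiding $i\R^*$: splitting into the regions $|z|\le|s|$ and $|z|\ge|s|$ gives a uniform bound independent of $s$. The Kalton--Weis theorem on operator-valued $H^\infty$-calculus for $\calR$-sectorial operators in UMD spaces then yields that $\{f_s(A^\omega):s\neq 0\}=\{s^{1-\alpha}(-A^\omega)^{\alpha}R(is,A^\omega)\}$ is $\calR$-bounded. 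Combining everything, $\{sR(is,\calA^\omega):s\neq 0\}$ is $\calR$-bounded, hence by Theorem \ref{theorem:Weis_Characterization}, $\calA^\omega\in\mathscr{MR}(0,T;X)$, and therefore $\calA\in\mathscr{MR}(0,T;X)$.

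The main obstacle is the last $\calR$-boundedness step: establishing $\calR$-boundedness of the fractional-power/resolvent family $\{s^{1-\alpha}(-A^\omega)^{\alpha}R(is,A^\omega)\}$. This cannot be deduced from norm bounds alone but requires the $\calR$-bounded $H^\infty$-functional calculus available for $\calR$-sectorial operators in UMD spaces. All remaining ingredients (the resolvent decomposition, the two outer $\calR$-bounded factors, and the fractional-power factorization) are straightforward once this step is in place.
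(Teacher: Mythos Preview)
Your overall strategy matches the paper's exactly: shift by $\omega$, use the resolvent identity to write $sR(is,\calA^\omega)$ as a sum, and show each piece is $\calR$-bounded, the novelty being the treatment of the factor $s^{1-\alpha}CR(is,A^\omega)$ via the factorization through $C(-A^\omega)^{-\alpha}$ (which is bounded by Lemma \ref{lemma:observation_fractional_power} since $\alpha>1/p$). The paper does precisely this.

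The only genuine gap is your final step. You claim that $\calR$-boundedness of $\{s^{1-\alpha}(-A^\omega)^{\alpha}R(is,A^\omega):s\neq 0\}$ follows from ``the Kalton--Weis theorem on operator-valued $H^\infty$-calculus for $\calR$-sectorial operators in UMD spaces''. But the Kalton--Weis theorem requires a \emph{bounded $H^\infty$-calculus}, not merely $\calR$-sectoriality, and maximal regularity (equivalently $\calR$-sectoriality, by Weis) does not in general imply a bounded $H^\infty$-calculus. So this invocation is not justified under the stated hypotheses.

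The paper closes this gap more cheaply, without any $H^\infty$-calculus, by splitting one step further:
\[
s^{1-\alpha}(-A^\omega)^{\alpha}R(is,A^\omega)
=\bigl[(-A^\omega)^{\alpha}(is-A^\omega)^{-\alpha}\bigr]\,\bigl[s^{1-\alpha}(is-A^\omega)^{\alpha-1}\bigr],
\]
and then citing \cite[Lemma 10]{KunWei}, which asserts that for an $\calR$-sectorial operator the families $\{(-A^\omega)^{\alpha}(is-A^\omega)^{-\alpha}:s\neq 0\}$ and $\{s^{1-\alpha}(is-A^\omega)^{\alpha-1}:s\neq 0\}$ are each $\calR$-bounded (these are standard fractional-power/resolvent estimates obtained from integral representations and need only $\calR$-sectoriality). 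Multiplying the two $\calR$-bounded families and the bounded operator $C(-A^\omega)^{-\alpha}$ gives exactly what you want. Replace your Kalton--Weis appeal by this two-factor argument and the proof is complete.
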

\begin{proof}
Let the operators $A^\om$ and $\calA^\om$ as in the proof of Theorem \ref{theorem:staffans-weiss-R-boundedness}. Let $s\in \R\backslash\{0\}$ and $\al\in (\frac{1}{q},1)$. According to Theorem \ref{theorem:Weis_Characterization} it suffices to show that the set $\{sR(is,\calA^\om):s\neq 0\}$ is $\mathcal{R}$-bounded. In fact, as in \eqref{formu-is}, we obtain
\begin{align*}
sR(is,\calA^\om)=s R(is,A^\om)+s^\alpha R(is,A_{-1}^\om)B(I-H^\om(is))^{-1} s^{1-\alpha} CR(is,A^\om).
\end{align*}
 By the proof of Theorem \ref{theorem:staffans-weiss-R-boundedness}, we know that the set $\{(I-H^\om(is))^{-1}:s\neq 0\}$ is $\mathcal{R}$-bounded. Now as by assumption  the set $\{s^\alpha R(is,A_{-1}^\om)B;s>0\}$ is $\mathcal{R}$-bounded, it suffices to show that the set $\{s^{1-\alpha} CR(is,A^\om);s>0\}$ is $\mathcal{R}$-bounded. We have
\begin{align*}
s^{1-\alpha} CR(is,A^\om) &= s^{1-\alpha} C(-A^\om)^{-\alpha} (-A^\om)^{\alpha} R(is,A^\om)\\
&= C(-A^\om)^{-\alpha} \left[(-A^\om)^{\alpha} (is-A^\om)^{-\alpha} \right] \left[ s^{1-\alpha}(is-A^\om)^{\alpha-1}\right] .\\
\end{align*}
By \cite{KunWei} Lemma 10, the sets $\{ (-A^\om)^{\alpha} (is-A^\om)^{-\alpha};s>0\}$ and $\{ s^{1-\alpha}(is-A^\om)^{\alpha-1};s>0\}$ are $\mathcal{R}$-bounded. On the other hand, by Lemma \ref{lemma:observation_fractional_power} the operator $C(-A^\om)^{-\alpha}$ has a bounded extension to $X$.  Hence the set $\{s^{1-\alpha} CR(is,A^\om);s>0\}$ is $\mathcal{R}$-bounded. This ends the proof.
\end{proof}

We end this section by the following result given the maximal $L^p$-regularity for the evolution equation \eqref{PM} (or equivalently \eqref{calA-max}).
\begin{cor}\label{SF-big-equa-result}
Let conditions {\bf(H1)}, {\bf(H2)}, ${ \bf (H3)}'$ and ${ \bf (H4)}'$ be satisfied on Banach spaces $X,U$ and $A\in \mathscr{MR}(0,T;X)$. Moreover, we assume that $(A,B,\P)$ generates a regular linear system on $X,U,X$ with $\P\in\calL(D(A),X)$ is the restriction of $P:Z\to X$ on $D(A)$. Then the operator $\calA+P\in \mathscr{MR}(0,T;X)$ (or equivalently the evolution equation \eqref{PM} has maximal $L^p$-regularity) if one of the following conditions hold:
\begin{itemize}
  \item [{\rm (i)}]$X$ is a non reflexive Banach space and there exists $\la_0\in\R$ such that
  \begin{align*}
  \sup_{{\rm Re}\la >\la_0}\|\la\D_\la\|<\infty.
  \end{align*}
  \item [{\rm (ii)}] $X$ and $U$ are UMD spaces, the  sets $\{s^\frac{1}{p} R(is,A_{-1})B;s\neq 0\}$ and $\{s^\frac{1}{q} C R(is,A);s\neq 0\}$ are $\mathcal{R}$-bounded.
  \item [{\rm (ii)}] $X$ and $U$ are UMD-spaces, and $A$ generates a bounded analytic semigroup and  there exist  constants $\om>\max\{\om_0(A),\om_0(\calA)\}$ and $\alpha \in (\frac{1}{p},1)$  such that the set $\{s^\alpha R(\om+is,A_{-1})B;s\neq 0\}$ is $\mathcal{R}$-bounded.
\end{itemize}
\end{cor}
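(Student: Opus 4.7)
The plan is to decompose the statement into two steps: first establishing that $\calA \in \mathscr{MR}(0,T;X)$ under each of the three hypotheses, and then using the Miyadera-Voigt type result of Theorem~\ref{theorem:Miyadera} to pass from $\calA$ to $\calA + P$. The assumptions ${\bf (H1)}$, ${\bf (H2)}$, ${\bf (H3)}'$, ${\bf (H4)}'$ together with the regularity of $(A,B,\P)$ on $X,U,X$ are precisely the data required by Theorem~\ref{theorem:generalisation_of_Hadd_Manzo_Rhandi}, and I will invoke that theorem as the bridge between the two steps.

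For the first step, I will simply apply one of the three Staffans-Weiss perturbation theorems already proved in Subsection~\ref{sub:4.3}, according to the case at hand:
\begin{itemize}
\item Under (i), the hypothesis $\sup_{{\rm Re}\la > \la_0}\|\la \D_\la\| < \infty$ together with $A\in \mathscr{MR}(0,T;X)$ is exactly the setting of Theorem~\ref{theorem:Staffans_Weiss_Non_Reflexive_Case}, which yields $\calA \in \mathscr{MR}(0,T;X)$.
\item Under (ii), the two $\mathcal{R}$-boundedness hypotheses (together with the UMD assumption and the analyticity of the semigroup generated by $A$, which is built into $A\in \mathscr{MR}(0,T;X)$ on a UMD space) are the hypotheses of Theorem~\ref{theorem:staffans-weiss-R-boundedness}, so again $\calA \in \mathscr{MR}(0,T;X)$.
\item Under (iii), the hypotheses are those of Proposition~\ref{corollary:staffans_weiss_R_boundedness}, which gives the same conclusion.
\end{itemize}

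For the second step, I will invoke Theorem~\ref{theorem:generalisation_of_Hadd_Manzo_Rhandi}(i): since $(A,B,\P)$ generates a regular linear system on $X,U,X$, the operator $P$ restricted to $D(\calA)$ is a $p$-admissible observation operator for $\calA$, and $(\calA+P, D(\calA))$ generates a $C_0$-semigroup on $X$. Having just shown $\calA \in \mathscr{MR}(0,T;X)$, I can now apply Theorem~\ref{theorem:Miyadera} with the roles of $(A,\P)$ played by $(\calA,P_{|D(\calA)})$; this yields $\calA + P \in \mathscr{MR}(0,T;X)$, which is precisely the desired maximal $L^p$-regularity of the evolution equation~\eqref{PM}.

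There is no genuine obstacle beyond the routine verification that the hypotheses of the quoted results are met in each case; the only point that requires a sentence of justification is the admissibility of $P$ as an observation operator for $\calA$, since Theorem~\ref{theorem:Miyadera} was stated for the unperturbed generator, and one needs to observe that the argument there applies verbatim once admissibility has been upgraded from $A$ to $\calA$ via Theorem~\ref{theorem:generalisation_of_Hadd_Manzo_Rhandi}.
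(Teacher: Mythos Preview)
Your proposal is correct and follows essentially the same approach as the paper's own proof: first invoke Theorem~\ref{theorem:generalisation_of_Hadd_Manzo_Rhandi} to obtain that $P$ is $p$-admissible for $\calA$, then in each case combine Theorem~\ref{theorem:Miyadera} with the appropriate Staffans--Weiss result (Theorem~\ref{theorem:Staffans_Weiss_Non_Reflexive_Case}, Theorem~\ref{theorem:staffans-weiss-R-boundedness}, or Proposition~\ref{corollary:staffans_weiss_R_boundedness}) to conclude. The paper's proof is in fact slightly terser than yours, but the logical structure is identical.
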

\begin{proof}
First of all the operator $P$ is $p$-admissible observation operator for $\calA,$ due to Theorem \ref{theorem:generalisation_of_Hadd_Manzo_Rhandi}. Now the assertion (i) follows by combining Theorem \ref{theorem:Miyadera} and Theorem \ref{theorem:Staffans_Weiss_Non_Reflexive_Case}. The assertion (ii) follows by combining Theorem \ref{theorem:Miyadera} and Theorem \ref{theorem:staffans-weiss-R-boundedness}. Finally,  the assertion (iii) follows by combining Theorem \ref{theorem:Miyadera} and Proposition \ref{corollary:staffans_weiss_R_boundedness}.
\end{proof}

\section{Applications}\label{sec:5}
The object of this section is to apply our obtained results to solve the problem of maximal $L^p$--regularity for integro-differential equations and boundary integro-differential equations. We then extend some results in \cite{Barta1}.
\subsection{Maximal regularity for free-boundary integro-differential equations}
Let $X_0,U_0,Z_0$ be Banach spaces such that $Z_0\subset X_0$ with continuous and dense embedding and let $q\in (1;\infty)$. We consider the following problem:
\begin{align}
\label{IACP}
\begin{cases}
\dot{\varrho}(t) = A_m\varrho(t)+\displaystyle\int_0^t a(t-s)F \varrho(s)ds +f(t), & t\ge 0,  \\ G\varrho(t)=0,& t\ge 0,\\
\varrho(0) = 0,
\end{cases}
\end{align}
where $A_m:Z_0\to X_0$ is a closed linear differential operator, $F:Z_0\to X_0$ a linear operator, $G:Z_0:\to U_0$ is a (trace) linear boundary operator and a certain measurable function $f:[0,\infty)\to X_0$.

The main purpose of this subsection is to apply our abstract results on maximal regularity developed in Section \ref{sec:4} for the integro-differential equation \eqref{IACP}. We first assume that
\begin{enumerate}
\item [{\bf(A1)}] $\A_0:=A_m$ with domain $D(\A_0):=\ker(G)$ is a generator of a $C_0$--semigroup $\T_0:=(\T_0(t))_{t\ge 0}$ on $X_0$.
\end{enumerate}
On the other hand, we denote
\begin{align*}
F_0:=F\imath,\quad \text{with}\quad \imath: D(\A_0)\to X_0\quad \text{is the continuous injection}.
\end{align*}


We now introduce the Banach product space
\begin{align*}
X:=X_0\times L^q(\R^+,X_0)\quad\text{with norm}\quad \left\|(\begin{smallmatrix} x\\ f\end{smallmatrix})\right\|:=\|x\|+\|f\|_q.
\end{align*}
On the other hand, we consider the matrix operator
\begin{align}\label{matrix-opetra1}
\mathfrak{A} :=\left(\begin{array}{cc}
\A_0 & \delta_0\\
\Upsilon & \frac{d}{ds}
\end{array}\right),\qquad
D(\mathfrak{A})=D(\A_0)\times D(\frac{d}{ds}),
\end{align}
where $\Upsilon x = a(\cdot)F  x$ for $x\in D(\A_0)$. Moreover, we define the function $\zeta: [0,+\infty)\to X$ by
\begin{align}\label{zeta-function}
\zeta(t)=\left(\begin{smallmatrix} f(t)\\ 0\end{smallmatrix}\right),\qquad t\ge 0.
\end{align}
Now to solve the integro-differential equation \eqref{IACP}, we just have to solve the equation
\begin{align}
\label{ACP1}
\begin{cases}
\dot{z}(t) = \mathfrak{A} z(t) + \zeta(t), & t\ge 0, \\
z(0) = (\begin{smallmatrix} 0\\ 0\end{smallmatrix}),
\
\end{cases}
\end{align}
and then the solution $\varrho$ of \eqref{IACP} is none other than the first component of the solution $z$(see \cite{EngNag}).

Let define the left shift semigroup on $L^q(\R^+,X_0)$ by
\begin{align*}
(S(t)f)(s)=f(t+s),\qquad t,s\ge 0.
\end{align*}

We recall that the left shift semigroup $\mathbb{S}:=(S(t))_{t\ge 0}$ is not analytic on $L^q(\R^+,X_0)$, so that the evolution equation associated with this semigroup hasn't the  maximal $L^p$--regularity on $L^q(\R^+,X_0)$. Hence one cannot  expect the maximal regularity for the problem \eqref{ACP1} on the space $X$ as well even if we assume that $\A_0$ has the maximal $L^p$--regularity on $X_0$. To overcome this obstacle we shall work in a small space of $X$ with respect to the second component. That is one looks for subspaces of $L^q(\R^+,X_0)$ in which the left shift semigroup  $\mathbb{S}:=(S(t))_{t\ge 0}$ is analytic. This observation has already been used in \cite{Barta1}, \cite{Barta2} to prove analyticity and maximal regularity of a particular class of Volterra equations. Indeed,  let  $h:\mathbb{R}^+\to \mathbb{R}^+$ be an {\em admissible function}, that is $h$ is an  increasing and convex function such that $h(0)=0$. On the other hand, define the sector
$$
\Sigma_h:=\{\si+i\tau\in \mathbb{C};x>0\quad \vert y\vert <h(\si) \}.
$$

For $q\in(1;\infty)$, we define the {\em Bergman space} of holomorphic $L^q$-integrable functions by:
\begin{equation*}
	B_{h,X_0}^q:=B_\theta^q(\Sigma_h;X_0):=\left\{f:\Sigma_h\rightarrow X_0\ \text{holomorphic}\ ;\ \int_{\Sigma_h}\Vert f(\t+i\si)\Vert_{X_0}^q d\tau d\si<\infty\right\},
\end{equation*}
with
\begin{equation*}
	\|f\|_{B_{h,X_0}^q}:=\left(\int_{\Sigma_h}\Vert f(\t+i\si)\Vert_{X_0}^q d\tau d\si\right)^\frac{1}{q}.
\end{equation*}
The following result is a slight modification of the one proved in \cite[lem.4.3]{Barta3}.
\begin{lem}\label{lemma:bergman_estimate}
Let $s>1$, $p>1$ and $h$ be an admissible function such that:
$$\int_0^1 h(\si)^{1-s}d\si<+\infty.$$
Then for every $q = \frac{ps}{s-1}$ and every $R>0$ there exists $C>0$ such that:
$$
\int_0^R \Vert f(t)\Vert_{X_0}^p dt \leq C \|f\|_{B_{h,X_0}^q}^p,
$$
holds for every $f\in B_{h,X_0}^q$.
\end{lem}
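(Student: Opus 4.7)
The strategy is to estimate pointwise values $\|f(t)\|_{X_0}$ for $t\in(0,R]$ via a local mean-value inequality inside $\Sigma_h$, and then to integrate these bounds with H\"older's inequality and Fubini, the whole calibrated so that the hypothesis on $h$ kicks in at the right place.

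Concretely, for each $t\in(0,R]$ I would use the admissibility of $h$ (convex, increasing, $h(0)=0$) to produce a subregion $V_t\subset\Sigma_h$ centered at $t$ whose imaginary extent is proportional to $h(t)$, and which contains a disk $B(t,c_0h(t))$ for some $c_0>0$ depending only on $h$. Since $f$ is holomorphic and $q\ge 1$, the function $z\mapsto\|f(z)\|^q_{X_0}$ is subharmonic, so the sub-mean-value property applied on this disk gives a pointwise bound of the shape
\begin{equation*}
\|f(t)\|^p_{X_0}\ \le\ C\,h(t)^{-\beta}\,\Bigl(\int_{V_t}\|f(z)\|^q_{X_0}\,dA(z)\Bigr)^{p/q},
\end{equation*}
where $\beta>0$ depends on the geometry of $V_t$ and the ratio $p/q$.

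Next, integrate this inequality over $t\in(0,R)$ and apply H\"older's inequality in $t$ with the conjugate pair $(q/p,\,q/(q-p))$, which is legitimate since $q>p$ (equivalently $s>1$). By the prescribed relation $q=\frac{ps}{s-1}$ one has $q/(q-p)=s$, and with a calibrated choice of $V_t$ the power of $h$ that emerges in the weight factor is exactly $s-1$, so that
\begin{equation*}
\int_0^R\|f(t)\|^p_{X_0}\,dt\ \le\ C'\Bigl(\int_0^R h(t)^{1-s}\,dt\Bigr)^{1/s}\Bigl(\int_0^R\!\!\int_{V_t}\|f(z)\|^q_{X_0}\,dA(z)\,dt\Bigr)^{p/q}.
\end{equation*}
The first factor is finite by the hypothesis $\int_0^1 h^{1-s}\,d\sigma<+\infty$ together with the boundedness of $h$ from below on $[1,R]$.

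For the second factor, Fubini's theorem yields
\begin{equation*}
\int_0^R\!\!\int_{V_t}\|f(z)\|^q_{X_0}\,dA(z)\,dt\ =\ \int_{\Sigma_h}\|f(z)\|^q_{X_0}\,\bigl|\{t\in(0,R)\colon z\in V_t\}\bigr|\,dA(z),
\end{equation*}
and the construction of $V_t$, combined with the admissibility of $h$, makes the length $|\{t\colon z\in V_t\}|$ uniformly bounded in $z$; hence the double integral is controlled by $\|f\|^q_{B^q_{h,X_0}}$. Assembling the two estimates yields the claim with an explicit constant depending on $R$, $p$, $s$, and $\int_0^R h^{1-s}\,d\sigma$.

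The main obstacle is the calibration of the subregion $V_t$ so that the weight appearing in the H\"older step is precisely $h^{1-s}$ and matches the stated hypothesis: a na\"ive application of the disk mean-value property (taking $V_t$ to be the full disk $B(t,c_0h(t))$) produces a strictly stronger singularity, so one must exploit either the elongated shape of $\Sigma_h$ in the real direction---by choosing $V_t$ anisotropically---or a refined Cauchy/reproducing formula adapted to this geometry. This delicate matching of exponents is precisely the ``slight modification'' of the argument of Bárta cited in \cite{Barta3}.
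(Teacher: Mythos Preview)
Your overall architecture (pointwise mean-value estimate, then H\"older, then Fubini) is the right one, and you correctly flag the exponent mismatch as the crux. But the gap is not a matter of calibrating $V_t$. As long as $V_t$ must contain a disk centred at $t$ and lying inside $\Sigma_h$, that disk has radius at most $\sim h(t)$, and the sub-mean-value inequality for the subharmonic function $\|f\|^q$ then forces $\beta\ge 2p/q=2(s-1)/s$. Your H\"older-in-$t$ step therefore produces the weight $h^{-\beta s}$ with exponent at most $2(1-s)$, strictly worse than the stated $1-s$; enlarging $V_t$ anisotropically does not help, because the mean-value bound is tied to the area of the inscribed disk, not to $|V_t|$. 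So the scheme as written cannot recover the hypothesis, and neither of your proposed fixes closes the gap.

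The actual fix---consistent with the paper's hint ``estimate $\|f(r)\|^p$ using Cauchy's formula'' (following B\'arta)---is simpler: keep the $p$-th power in the area integral and swap the order of Fubini and H\"older. From the disk mean value one has
\[
\|f(t)\|^p\le C\,h(t)^{-2}\int_{B(t,ch(t))}\|f(z)\|^p\,d\tau\,d\sigma.
\]
Integrate in $t$ and apply Fubini \emph{first}: for $z=\sigma+i\tau$ in $B(t,ch(t))$ one has $|\sigma-t|<ch(t)$, so $t$ ranges over an interval of length $\lesssim h(\sigma)$ on which $h(t)\sim h(\sigma)$, whence the inner $t$-integral is $\lesssim h(\sigma)^{-1}$. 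Only then apply H\"older, now over the two-dimensional region $\Sigma_h\cap\{{\rm Re}\,z<R'\}$ with exponents $q/p$ and $q/(q-p)=s$:
\[
\int_0^R\|f(t)\|^p\,dt\ \le\ C\,\|f\|_{B^q_{h,X_0}}^{p}\Bigl(\int_0^{R'}\!\!\int_{-h(\sigma)}^{h(\sigma)}h(\sigma)^{-s}\,d\tau\,d\sigma\Bigr)^{1/s}
= C'\,\|f\|_{B^q_{h,X_0}}^{p}\Bigl(\int_0^{R'}h(\sigma)^{1-s}\,d\sigma\Bigr)^{1/s}.
\]
Doing H\"older over $\Sigma_h$ rather than over $(0,R)$ is what lets the vertical width $2h(\sigma)$ of $\Sigma_h$ contribute one factor of $h$, converting $h^{-s}$ into $h^{1-s}$; in your ordering this same factor is discarded when you bound $|\{t:z\in V_t\}|$ by a constant.
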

\begin{proof}
The proof of this lemma is similar to the proof of Lemma 4.3 in \cite{Barta3}. We estimate $\Vert f(r)\Vert^p$ using Cauchy's formula and we follow the same steps.
\end{proof}
Hereafter, we will take $p>1$, $h(\si) = \tan(\theta)\si$, $s\in(1,2)$ and $q=\frac{ps}{s-1}$. It is not hard to see that $h$ is an admissible function and $\Sigma_h$ is none other than $\Sigma_\theta$.

In the rest of this paragraph, instead of the space $X,$ we will work on the following subspace
\begin{align*}
X^q:=X_0\times B_{h,X_0}^q,\qquad\left\|(\begin{smallmatrix} x\\f\end{smallmatrix}\right)\|_{X^q}:=\|x\|_{X_0}+ \|f\|_{B_{h,X_0}^q}.
\end{align*}
On the space $B_{h,X_0}^q$, we define the complex derivative $\frac{d}{dz}$ with its natural domain :
$$
D\left(\frac{d}{dz}\right):=\left\{ f\in B_{h,X_0}^q; f' \in B_{h,X_0}^q\right\}
$$
It is shown in \cite{Barta1} that the operator $(\frac{d}{dz},D(\frac{d}{dz}))$ generates an analytic semigroup of translation on the Bergman space $B_{h,X_0}^q$ and has the maximal $L^p$-regularity on $L^q(\R^+,X_0)$ whenever  $X_0$ is an UMD space.

Let us now discuss the maximal regularity for the matrix operator $\mathfrak{A}$ with domain $D(\mathfrak{A})=D(\A_0)\times D\left(\frac{d}{dz}\right)$ on the space $X^q$. We first split the operator $\mathfrak{A}$ as $\mathfrak{A}:=A+P,$ where
$$
A :=\left(\begin{array}{cc}
\A_0 & 0\\
0 & \frac{d}{ds}
\end{array}\right);\quad
D(A)=D(\A_0)\times D\left(\frac{d}{dz}\right),
$$
and
$$
P :=\begin{pmatrix}
0 & \delta_0\\
\Upsilon_0 & 0
\end{pmatrix};\quad
D(P)=D(\A_0)\times D\left(\frac{d}{dz}\right).
$$
where $\Upsilon_0 := \Upsilon_{|D(\A_0)}$. Clearly, the operator $A$ generates on $X^q$  the following strongly continuous semigroup
$$
\T(t)=\begin{pmatrix}
\T_0(t) & 0\\
0    & S(t)
\end{pmatrix}.
$$
The next result introduce conditions for which the operator $P$ becomes an admissible observation operator for $A$.
\begin{lem}\label{Lemma-adm-calP}
Assume that $a(\cdot)\in B_{h,\C}^q$ and $F_0\in\calL(D(\A_0),X_0)$ is a $p$-admissible observation operator for $\A_0$. Then $\Upsilon_0\in\calL(D(\A_0),B_{h,X_0}^q)$ is a $p$-admissible operator for $\A_0$ as well. In particular the operator $P\in\calL(D(A),X^q)$ is a $p$-admissible observation operator for $A$.
\end{lem}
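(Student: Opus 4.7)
The plan is to treat the two parts of the lemma separately: first verify that $\Upsilon_0$ is a $p$-admissible observation operator for $\A_0$, then use this together with the Bergman-space estimate of Lemma \ref{lemma:bergman_estimate} to handle the matrix operator $P$.

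For the first part, I would begin by checking that $\Upsilon_0$ really does map $D(\A_0)$ boundedly into $B_{h,X_0}^q$. Since $a\in B_{h,\C}^q$ is holomorphic on $\Sigma_h$ and $F_0 x\in X_0$ is a constant, the function $z\mapsto a(z)F_0 x$ is holomorphic on $\Sigma_h$ with values in $X_0$, and a direct computation gives $\|\Upsilon_0 x\|_{B_{h,X_0}^q}=\|a\|_{B_{h,\C}^q}\|F_0 x\|_{X_0}$, so that $\Upsilon_0\in\calL(D(\A_0),B_{h,X_0}^q)$. To verify $p$-admissibility, I apply the same identity pointwise in $t$:
\begin{equation*}
\int_0^\alpha \|\Upsilon_0 \T_0(t)x\|_{B_{h,X_0}^q}^p\,dt \;=\;\|a\|_{B_{h,\C}^q}^p\int_0^\alpha \|F_0\T_0(t)x\|_{X_0}^p\,dt \;\le\; \|a\|_{B_{h,\C}^q}^p\,\kappa_0^p\,\|x\|_{X_0}^p,
\end{equation*}
where the last bound is the $p$-admissibility of $F_0$ for $\A_0$.

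For the second part, I would use the block structure of $\T(t)=\mathrm{diag}(\T_0(t),S(t))$ to write
\begin{equation*}
P\T(t)\begin{pmatrix}x\\ f\end{pmatrix}=\begin{pmatrix}\delta_0 S(t)f\\ \Upsilon_0\T_0(t)x\end{pmatrix}=\begin{pmatrix}f(t)\\ a(\cdot)F_0\T_0(t)x\end{pmatrix},
\end{equation*}
so that $\|P\T(t)(\begin{smallmatrix}x\\f\end{smallmatrix})\|_{X^q}=\|f(t)\|_{X_0}+\|\Upsilon_0\T_0(t)x\|_{B_{h,X_0}^q}$. I would then apply Minkowski's inequality in $L^p([0,\alpha])$ and estimate the two resulting terms independently: the second term is already controlled by the first part of the lemma, while the first term is exactly where Lemma \ref{lemma:bergman_estimate} enters --- with the chosen admissible function $h(\sigma)=\tan(\theta)\sigma$ and the relation $q=ps/(s-1)$, that lemma furnishes a constant $C$ with $\int_0^\alpha\|f(t)\|_{X_0}^p\,dt\le C\|f\|_{B_{h,X_0}^q}^p$ for every $f\in B_{h,X_0}^q$. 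Combining the two bounds gives the required admissibility estimate for $P$ with respect to $A$ on $X^q$.

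The only delicate point is the first-term estimate: the Bergman embedding is precisely what allows us to recover pointwise/$L^p$ control of a holomorphic function from its Bergman-norm, which the mere inclusion $B_{h,X_0}^q\subset L^q(\mathbb{R}^+,X_0)$ would not directly yield with the right exponent $p$. Once that embedding is available via Lemma \ref{lemma:bergman_estimate}, the argument becomes a routine triangle-inequality assembly.
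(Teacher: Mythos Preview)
Your proof is correct and follows essentially the same approach as the paper: the identity $\|\Upsilon_0 x\|_{B_{h,X_0}^q}=\|a\|_{B_{h,\C}^q}\|F_0 x\|_{X_0}$ reduces the $p$-admissibility of $\Upsilon_0$ to that of $F_0$, and Lemma~\ref{lemma:bergman_estimate} gives the $p$-admissibility of $\delta_0$ for $\frac{d}{dz}$, after which the diagonal structure of $\T(t)$ assembles the estimate for $P$. Your write-up is in fact slightly more explicit than the paper's (you spell out the block computation and the use of Minkowski), but the underlying argument is identical.
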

\begin{proof}
Since $F_0$ is $p$-admissible observation operator for $\A_0$, for $\al>0,$ there exists $\ga>0$ such that
\begin{align*}
\int^\al_0 \|F_0\T_0(t)x\|_{X_0}^pdt\le \ga^p \|x\|_{X_0}^p ,
\end{align*}
for any $x\in D(\A_0)$. On the other hand, we have
\begin{align*}
\int^\al_0 \|\Upsilon_0 \T_0(t)x\|^p_{B_{h,X_0}^q}dt&=\int^\al_0 \left(\int_{\Sigma_h}\Vert a(\t+i\si)F_0\T_0(t)x\Vert_{X_0}^q d\tau d\si\right)^\frac{p}{q}dt\cr &= \int^\al_0 \left(\int_{\Sigma_h}|a(\t+i\si)| d\tau d\si\right)^\frac{p}{q} \|F_0\T(t)x\|_{X_0}^p dt\cr & =\|a\|_{B_{h,\C}^q}^p\int^\al_0  \|F_0\T_0(t)x\|_{X_0}^p dt\cr & \le \left(\ga \|a\|_{B_{h,\C}^q}\right)^p \|x\|_{X_0}^p,
\end{align*}
for any $x\in D(\A_0).$
This shows that $\Upsilon_0$ is $p$-admissible observation for $\A_0$. On the other hand, Lemma \ref{lemma:bergman_estimate} implies that the operator $\delta_0:D\left(\frac{d}{dz}\right)\subset B_{h,X_0}^q\to X$ is $p$-admissible for  $\frac{d}{dz}$. Using the above expression of the semigroup $(\T(t))_{t\ge 0}$ it is clear that $P$ is a $p$--admissible observation operator for $A$.
\end{proof}
Now we state the main result of this section:
\begin{thm}\label{good-thm}
Let $X_0$ be a UMD space,  $s>1$,  and $h$ be an admissible function  satisfying
$$\int_0^1 h(\si)^{1-s}d\si<+\infty.$$ Let $p>1$ and set $q = \frac{ps}{s-1}$. Assume that $a(\cdot)\in B_{h,\C}^q$	and $F_0\in\calL(D(\A_0),X_0)$ is a $p$-admissible observation operator for $\A_0$. If $\A_0$ has maximal $L^p$-regularity on $X_0$, then $\mathfrak{A}$ has the  maximal $L^p$-regularity on $X^q$.
\end{thm}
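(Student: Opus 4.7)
The plan is to decompose $\mathfrak{A} = A + P$ as in the paragraphs preceding the statement and then to apply the Miyadera-type perturbation result Theorem \ref{theorem:Miyadera}. For that we need two inputs: the maximal $L^p$-regularity of the diagonal operator $A$ on $X^q$, and the $p$-admissibility of $P$ as an observation operator for $A$. The second is exactly Lemma \ref{Lemma-adm-calP} under the hypotheses $a\in B_{h,\C}^q$ and $F_0$ being $p$-admissible for $\A_0$, so nothing more is required on that side.

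First I would establish that $A=\mathrm{diag}(\A_0,\tfrac{d}{ds})$ has maximal $L^p$-regularity on $X^q=X_0\times B_{h,X_0}^q$. Maximal regularity of a diagonal operator on a product space is equivalent to the maximal regularity of each diagonal component on its factor (since the resolvent, the semigroup, and the convolution operator $\mathscr{R}$ of Remark \ref{remark:Maximal_Regularity_Operator}(i) all split coordinatewise, and the product norm is equivalent to the sum of the component norms). The first component $\A_0$ has maximal $L^p$-regularity on $X_0$ by hypothesis. For the second component, the complex derivative $\tfrac{d}{dz}$ generates an analytic translation semigroup on the Bergman space $B_{h,X_0}^q$ and, as already recalled in the paragraph following Lemma \ref{lemma:bergman_estimate}, has the maximal $L^p$-regularity on $B_{h,X_0}^q$ whenever $X_0$ is UMD (this is the result from \cite{Barta1} that motivated the Bergman-space framework). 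Thus $A\in \mathscr{MR}(0,T;X^q)$.

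Next, Lemma \ref{Lemma-adm-calP} gives that $P\in\mathcal L(D(A),X^q)$ is a $p$-admissible observation operator for $A$. I would just check here that the hypotheses of that lemma are in force: $a\in B_{h,\C}^q$ is assumed, $F_0$ is $p$-admissible for $\A_0$ by hypothesis, and the Bergman-to-boundary trace $\delta_0$ is $p$-admissible for $\tfrac{d}{dz}$ by Lemma \ref{lemma:bergman_estimate}, so both off-diagonal entries enjoy the required admissibility and the diagonal structure of $A$ assembles them into a $p$-admissible observation operator for $A$ on $X^q$.

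Finally, I would conclude by invoking Theorem \ref{theorem:Miyadera} (which needs no UMD on the state space, only that $A\in\mathscr{MR}$ and that the perturbation be $p$-admissible for $A$): it yields $\mathfrak{A}=A+P\in\mathscr{MR}(0,T;X^q)$. I do not anticipate a serious obstacle here; the proof is essentially an assembly of previously established pieces. The only point that warrants care is the splitting of maximal regularity across the product $X^q$, in particular verifying that the convolution operator $\mathscr{R}$ for $A$ acts coordinatewise as $\mathrm{diag}(\mathscr{R}_{\A_0},\mathscr{R}_{d/ds})$ and therefore extends boundedly on $L^p([0,T];X^q)$ iff each diagonal piece does on the respective factor.
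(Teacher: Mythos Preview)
Your proposal is correct and follows exactly the same route as the paper: decompose $\mathfrak{A}=A+P$, use the maximal $L^p$-regularity of $\tfrac{d}{dz}$ on the Bergman space (from \cite{Barta2}) together with that of $\A_0$ to get $A\in\mathscr{MR}(0,T;X^q)$ via the diagonal splitting, invoke Lemma~\ref{Lemma-adm-calP} for the $p$-admissibility of $P$, and conclude with Theorem~\ref{theorem:Miyadera}. Your write-up is in fact more explicit than the paper's about the coordinatewise splitting of $\mathscr{R}$, which is the only point requiring any care.
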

\begin{proof}
	In \cite{Barta2}, the author showed that $(\frac{d}{dz},D(\frac{d}{dz}))$ has the maximal $L^p$-regularity on $B_{h,X_0}^q$. By assumptions,  $\A_0$ has maximal $L^p$-regularity on $X_0$, then it is easy to see that $A$ has maximal $L^p$-regularity on $X^q$. Since $P$ is $p$-admissible for $A$ and $\mathfrak{A} =A + P$, Theorem \ref{theorem:Miyadera} guaranties that $\mathfrak{A}$ has the maximal $L^p$-regularity on $X^q$.
\end{proof}
\begin{rem}
With the notation in \cite[Thm.3.3]{Barta2}, we have $B(\cdot)=a(\cdot)F$, but  with the concept of the admissibility of the operator $F$ and according to Lemma \ref{lemma:observation_fractional_power} and Remark \ref{comparaison-Luts-small}, $B(\cdot)$ becomes small with respect to $\A_0$ in the sense of (i) in [5, Thm 3.3]. Thus the result of \cite{Barta2} can be obtained for this class of Volterra equation. Further, the author in \cite{Barta2},  proved  that condition (ii) of \cite[Thm.3.3]{Barta2} is also sufficient to obtain the required result. But its not clear for us how the author in \cite[Thm.3.3]{Barta2} can use \cite[Corollary 12]{KunstmannWeis} to obtain the result. However, under the condition (ii), the author in \cite[Thm.3.3]{Barta2} can get the desired result by observing only that (ii) implies easily(i). Our result of Maximal $L^p$-regularity of free-boundary Volterra equation (5.1) is not very strong, since the author  in \cite{Barta2}  shows it for a class of the operator $F$ that are not necessarily admissible. However, we show that the next example cannot be examined by the work \cite[Thm.3.3]{Barta2}.
\end{rem}
\subsection{Boundary perturbation of an integro-differential equation:}
We are still working under the setting of the previous section and we are now studding the problem:
\begin{equation}
\label{BIP}
\begin{cases}
\dot{\varrho}(t) = A_m\varrho(t)+\displaystyle\int_0^t a(t-s)F \varrho(s)ds +f(t) , & \mbox{ } t\geq 0  \\
G\varrho(t) = K\varrho(t), & \mbox{} t\geq 0 \\
\varrho(0) = 0 & \mbox{}
\
\end{cases}
\end{equation}
where $A_m:Z_0\to X_0$ is a closed linear operator, $G,K:Z_0\to U_0$ and $F:Z_0\to X_0$ are linear operators. Moreover, we introduce the operator
\begin{align*}
\A:=A_m,\qquad D(\A):=\{x\in Z_0:Gx=Kx\}.
\end{align*}
The problem \eqref{BIP} can now be written as
\begin{equation}
\label{BIP1}
\begin{cases}
\dot{\varrho}(t) = \A\varrho(t)+\displaystyle\int_0^t a(t-s)F \varrho(s)ds +f(t) , & \mbox{ } t\geq 0, \\
\varrho(0) = 0, & \mbox{}.
\
\end{cases}
\end{equation}
 This integro-differential equation is similar to that investigated in the previous subsection. We then use the same notation of Bergman space and product spaces. On  $X= X_0\times B_{h,X_0}^q$, let us define  the matrix operator
\begin{align*}
\mathfrak{G}=\begin{pmatrix} \A& \delta_0\\ \Upsilon& \frac{d}{dz}\end{pmatrix},\quad D(\mathfrak{G}):=D(\A)\times D\left(\frac{d}{dz}\right),
\end{align*}
where $\Upsilon x=a(\cdot)F x$ for $x\in Z_0$. As discussed in the previous subsection the maximal $L^p$-regularity of the integro-differential equation \eqref{BIP1} is reduced to look for conditions for which the operator $\mathfrak{G}$ is a generator on $X^q$ and has the maximal $L^p$-regularity on $X^q$.

We introduce the following assumptions:
 \begin{enumerate}
                 \item[{\bf(A1)}] $G:Z_0\to U_0$ is surjective
                 \item [{\bf(A2)}] $\A_0:=A_m$ with domain $D(\A_0):=\ker(G)$ is a generator of a $C_0$--semigroup on $X_0$.
\end{enumerate}
As discussed in Section \ref{sec:2}, the assumptions {\bf(A1)} and {\bf(A2)} imply that the Dirichlet operator
\begin{align*}
\D_\la:=\left(G_{|\ker(\la-A_m)}\right)^{-1}\in\calL(U_0,X_0),\qquad \la\in\rho(\A_0),
\end{align*}
exists. Define then
\begin{align*}
&\B_0:=(\la-\A_{0,-1})\D_\la\in\calL(U_0,X_{0,-1}),\qquad \la\in\rho(\A_0),\cr & \K_0:=K_{|D(\A_0)}\in\calL(D(\A_0),U_0),\cr & F_0:=F_{|D(\A_0)}\in\calL(D(\A_0),X_0).
\end{align*}
We also need the following hypotheses
\begin{enumerate}
                 \item[{\bf(A3)}] the triple operator $(\A_0,\B_0,\K_0)$ generates a regular linear system on $X_0,U_0,U_0$ with the identity operator $I_{U_0}:U_0\to U_0$ as an admissible feedback.
                 \item [{\bf(A4)}]  the triple operator $(\A_0,\B_0,F_0)$ generates a regular linear system on $X_0,U_0,X_0$.
\end{enumerate}
The following result shows the generation property of the operator $(\mathfrak{G},D(\mathfrak{G}))$.
\begin{prop}\label{appl2-prop}
Let assumptions {\bf(A1)} to {\bf(A4)} be satisfied. Then the operator  $(\mathfrak{G},D(\mathfrak{G}))$ generates a strongly continuous semigroup on $X^q$.
\end{prop}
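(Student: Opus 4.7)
The plan is to recast $\mathfrak{G}$ as a perturbed boundary operator of the form studied in Section~\ref{sec:3}, working now on the product space $X^q = X_0\times B_{h,X_0}^q$ with boundary space $U_0$, and then invoke Theorem~\ref{theorem:generalisation_of_Hadd_Manzo_Rhandi}. Define on the dense subspace $Z := Z_0 \times D(\tfrac{d}{dz}) \subset X^q$ the closed operators
\begin{align*}
A_m^{\rm big}(x,f) &:= \bigl(A_m x,\; \tfrac{d}{dz} f\bigr),\\
P^{\rm big}(x,f)   &:= \bigl(\delta_0 f,\; a(\cdot)F x\bigr),\\
G^{\rm big}(x,f)   &:= Gx,\qquad K^{\rm big}(x,f) := Kx.
\end{align*}
A direct comparison shows that $\mathfrak{G} = (A_m^{\rm big}+P^{\rm big})\big|_{\{G^{\rm big}z = K^{\rm big}z\}}$, so it suffices to verify the hypotheses of Theorems~\ref{theorem:Hadd_Manzo_Rhandi} and~\ref{theorem:generalisation_of_Hadd_Manzo_Rhandi} for the data $(A_m^{\rm big}, G^{\rm big}, K^{\rm big}, P^{\rm big})$ on $X^q$.

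First, $G^{\rm big}$ is surjective by \textbf{(A1)} and $\ker G^{\rm big} = D(\A_0) \times D(\tfrac{d}{dz})$, so $A^{\rm big} := A_m^{\rm big}|_{\ker G^{\rm big}} = \A_0 \oplus \tfrac{d}{dz}$ generates the product $C_0$-semigroup $\T_0(t)\oplus S(t)$: $\A_0$ does by \textbf{(A2)}, and $\tfrac{d}{dz}$ is the analytic translation generator on $B_{h,X_0}^q$ recalled in the proof of Theorem~\ref{good-thm}. Since the equation $f'=\lambda f$ admits no nonzero Bergman-space solution, one computes the Dirichlet operator as $\mathbb{D}_\lambda^{\rm big} u = (\mathbb{D}_\lambda u, 0)$, yielding
\begin{align*}
B^{\rm big} u := (\lambda - A^{\rm big}_{-1})\mathbb{D}_\lambda^{\rm big}u = (\B_0 u,0),\qquad C^{\rm big}(x,f) := K^{\rm big}|_{D(A^{\rm big})}(x,f) = \K_0 x.
\end{align*}
The triple $(A^{\rm big}, B^{\rm big}, C^{\rm big})$ is thus a trivial extension of $(\A_0,\B_0,\K_0)$ along the $f$-coordinate, and it inherits $p$-admissibility, regularity, and admissibility of $I_{U_0}$ as feedback directly from \textbf{(A3)}.

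Second, I must check the additional assumption of Theorem~\ref{theorem:generalisation_of_Hadd_Manzo_Rhandi}, namely that $(A^{\rm big}, B^{\rm big}, \mathbb{P}^{\rm big})$ generates a regular linear system on $X^q, U_0, X^q$, where $\mathbb{P}^{\rm big} := P^{\rm big}|_{D(A^{\rm big})}$. The $p$-admissibility of $\mathbb{P}^{\rm big}$ as observation operator for $A^{\rm big}$ is proved exactly as in Lemma~\ref{Lemma-adm-calP}: the identity $\mathbb{P}^{\rm big}\T^{\rm big}(t)(x,f) = (f(t),\, a(\cdot)F_0\T_0(t)x)$ combined with Lemma~\ref{lemma:bergman_estimate} (giving $p$-admissibility of $\delta_0$ for $\tfrac{d}{dz}$), with $p$-admissibility of $F_0$ for $\A_0$ provided by \textbf{(A4)}, and with $a\in B_{h,\mathbb{C}}^q$, yields the required estimate. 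For regularity of the triple, using $\Phi_t^{\rm big} u = (\Phi_t^0 u, 0)$ one obtains the factorization
\begin{align*}
(\mathbb{F}_\infty^{\rm big} u)(t) = \mathbb{P}^{\rm big}\Phi_t^{\rm big} u = \bigl(0,\; a(\cdot)(F_\infty^0 u)(t)\bigr),
\end{align*}
where $F_\infty^0$ is the bounded extended input--output operator of the regular linear system $(\A_0,\B_0,F_0)$ furnished by \textbf{(A4)}. Both the $L^p$-boundedness of $\mathbb{F}_\infty^{\rm big}$ and the Weiss zero-feedthrough condition reduce directly to the corresponding properties of $F_\infty^0$.

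All hypotheses of Theorem~\ref{theorem:generalisation_of_Hadd_Manzo_Rhandi} are then fulfilled, which yields that $(\mathfrak{G}, D(\mathfrak{G}))$ generates a strongly continuous semigroup on $X^q$. I expect the main technical obstacle to be the regularity verification for $(A^{\rm big},B^{\rm big},\mathbb{P}^{\rm big})$: one must carefully identify $\mathbb{D}_\lambda^{\rm big}$ and then exploit the factorization through $a(\cdot)$ together with the Bergman-space trace admissibility of $\delta_0$ (Lemma~\ref{lemma:bergman_estimate}) to reduce the Staffans--Weiss conditions on the coupled product system to those of the scalar regular linear system supplied by \textbf{(A4)}.
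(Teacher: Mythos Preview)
Your proof is correct, but it packages the argument differently from the paper. The paper works \emph{sequentially on $X_0$ first}: it uses {\bf(A1)}--{\bf(A3)} with Theorem~\ref{theorem:Hadd_Manzo_Rhandi} to conclude that $\A$ generates on $X_0$, then invokes the argument of Theorem~\ref{theorem:generalisation_of_Hadd_Manzo_Rhandi}(i) together with {\bf(A4)} to deduce that $F$ is $p$-admissible for $\A$, and finally appeals to the free-boundary case of the previous subsection (Lemma~\ref{Lemma-adm-calP} with $\A$ in place of $\A_0$, and Remark~\ref{Hadd_thm}(i)) to obtain generation of $\mathfrak{G}=(\A\oplus\tfrac{d}{dz})+P$ on $X^q$. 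You instead lift all the data to $X^q$ at once and apply Theorem~\ref{theorem:generalisation_of_Hadd_Manzo_Rhandi} a single time on the product space; the closed-loop operator $\calA^{\rm big}$ you obtain is precisely $\A\oplus\tfrac{d}{dz}$, so the two routes reach the same decomposition. Your approach is more self-contained but requires you to re-verify on $X^q$ the regularity of $(A^{\rm big},B^{\rm big},\mathbb{P}^{\rm big})$ --- which you do correctly via the factorization $(\mathbb{F}_\infty^{\rm big}u)(t)=(0,\,a(\cdot)(F_\infty^0 u)(t))$ --- whereas the paper's route is shorter because it simply recycles the already-established Section~\ref{sec:5}.1 machinery after the scalar boundary problem on $X_0$ is solved. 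Note that both arguments tacitly use $a\in B_{h,\mathbb{C}}^q$, which is part of the standing data of the subsection though not listed among {\bf(A1)}--{\bf(A4)}.
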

\begin{proof}
Assumptions {\bf(A1)} to {\bf(A3)} show that the operator $(\A,D(\A))$ generates a strongly continuous semigroup on $X_0$, see Theorem \ref{theorem:Hadd_Manzo_Rhandi}. On the other hand, if in addition we consider the assumption {\bf(A4)}, then as in the proof of Theorem \ref{theorem:generalisation_of_Hadd_Manzo_Rhandi} (i), one can see that the operator $F\in\calL(D(A),X_0)$ is a $p$-admissible observation of $\A$.  Hence the rest of the proof follows exactly  in the same way as in the previous subsection.
\end{proof}
The main result of this subsection is the following.
\begin{thm}\label{Volterra-boundary}
let $X_0,U_0$ be UMD spaces,  $s>1$ a real number, and $h$ be an admissible function  satisfying
$$\int_0^1 h(x)^{1-s}dx<+\infty.$$ Let $p>1$ and set $q = \frac{ps}{s-1}$. Suppose that $a(\cdot)\in B_{h,\C}^q$,  assumptions {\bf(A1)} to {\bf(A4)} be satisfied,  $\A_0$ generates a bounded analytic semigroup and  there exists $\om>\max\{\om_0(\A_0);\om_0(\A)\}$ such that the sets $\{s^\frac{1}{p} R(\om+is,\A_{0,-1})\B_0;s\neq 0\}$ and $\{s^\frac{1}{p'} \K_0 R(\om+is,\A_0);s\neq 0\}$ are $\mathcal{R}$-bounded with $\frac{1}{p}+\frac{1}{p'}=1$. If $\A_0\in \mathscr{MR}(0,T;X_0)$ then $\mathfrak{G} \in \mathscr{MR}(0,T;X^q)$.
\end{thm}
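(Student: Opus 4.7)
The plan is to realize $\mathfrak{G}$ as a combined Staffans--Weiss and Miyadera--Voigt perturbation of the diagonal operator $A:=\mathrm{diag}(\A_0,\tfrac{d}{dz})$ on $X^q$, and then to invoke Corollary \ref{SF-big-equa-result}(ii). Concretely, I would lift the abstract boundary data to the product space by setting $\widehat{A}_m:=\left(\begin{smallmatrix}A_m & \delta_0\\ \Upsilon & d/dz\end{smallmatrix}\right)$ on $Z:=Z_0\times D(\tfrac{d}{dz})$, $\widehat{G}(x,f):=Gx$, $\widehat{K}(x,f):=Kx$, and the additive (Miyadera--Voigt) piece $\widetilde{P}(x,f):=(\delta_0 f,\Upsilon x)$. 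Then $\ker(\widehat G)=D(\A_0)\times D(\tfrac{d}{dz})$, so $\widehat{A}_m|_{\ker(\widehat G)}=A$, while $\mathfrak{G}$ coincides with $\widehat{A}_m+\widetilde P$ on $\{(x,f)\in Z:\widehat G(x,f)=\widehat K(x,f)\}$, which is exactly the abstract template \eqref{PM}.

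First I would verify the structural hypotheses for the lifted data. Hypotheses {\bf(H1)}, {\bf(H2)}, {\bf(H3)}' follow at once from their $X_0$-level counterparts, using that $\widehat K$ depends only on the first coordinate. Because $\ker(\la-\tfrac{d}{dz})\cap B^q_{h,X_0}=\{0\}$ for ${\rm Re}\,\la$ large, the Dirichlet operator for $(\widehat A_m,\widehat G)$ splits as $\widehat{\D}_\la=(\D_\la,0)^\top$, so the associated control operator becomes $\widehat B u=(\B_0 u,0)^\top\in X^q_{-1}$. The diagonal structure of the semigroup $T_A(t)=\mathrm{diag}(\T_0(t),S(t))$ then transfers the $p$-admissibility of $(\A_0,\B_0)$ and $(\K_0,\A_0)$ to $(\widehat B,A)$ and $(\widehat K|_{D(A)},A)$, and the transfer function of $(A,\widehat B,\widehat K|_{D(A)})$ reduces to that of $(\A_0,\B_0,\K_0)$; thus hypothesis {\bf(A3)} secures {\bf(H4)}' at the lifted level. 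The $p$-admissibility of $\widetilde P|_{D(A)}$ for $A$ is exactly Lemma \ref{Lemma-adm-calP}, and an analogous diagonal reduction, using {\bf(A4)} together with $a\in B^q_{h,\C}$, shows that $(A,\widehat B,\widetilde P|_{D(A)})$ generates a regular linear system on $X^q,U_0,X^q$.

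Next I would check the analytic and $\calR$-boundedness hypotheses. Since $X_0$ is UMD and $B^q_{h,X_0}$ is a closed subspace of $L^q(\Sigma_h;X_0)$, the space $X^q$ is UMD. The assumed bounded analyticity of $\T_0$ together with the analyticity of the translation semigroup on Bergman space (B\'arta) yields that $A$ generates a bounded analytic semigroup; combining $\A_0\in\mathscr{MR}(0,T;X_0)$ with the maximal $L^p$-regularity of $\tfrac{d}{dz}$ on $B^q_{h,X_0}$ gives $A\in\mathscr{MR}(0,T;X^q)$. The diagonal identities
\[ R(\om+is,A_{-1})\widehat B u=\bigl(R(\om+is,\A_{0,-1})\B_0 u,\;0\bigr)^\top,\qquad \widehat K|_{D(A)}R(\om+is,A)=\K_0 R(\om+is,\A_0)\pi_1, \]
with $\pi_1$ the projection onto the first coordinate, then lift the two assumed $\calR$-boundedness conditions from the $\A_0$-level to the $A$-level. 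With every hypothesis of Corollary \ref{SF-big-equa-result}(ii) in place, the conclusion $\mathfrak{G}\in\mathscr{MR}(0,T;X^q)$ follows.

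The principal difficulty in executing this plan lies in justifying the regularity of the lifted triples on the product space: although the resolvent and semigroup of $A$ decouple, the operators $\widehat B$ and $\widetilde P$ feed information from one coordinate to the other through $\delta_0$ and $\Upsilon$, so the bounded extensions of the input--output maps must be built by combining the admissibility estimates for $(\A_0,\B_0,\K_0)$ and $(\A_0,\B_0,F_0)$ with the Bergman-space integrability of the scalar kernel $a(\cdot)$; once this coupling is handled, the remainder of the argument is a diagonal transport of properties already established at the $\A_0$-level.
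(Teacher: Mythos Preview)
There is a slip in your setup: you define $\widehat{A}_m$ as the full matrix including the off-diagonal entries $\delta_0$ and $\Upsilon$, but then assert $\widehat{A}_m|_{\ker(\widehat G)}=A$ and that $\mathfrak{G}$ coincides with $\widehat{A}_m+\widetilde P$ on the boundary-perturbed domain. These identities hold only if $\widehat{A}_m=\mathrm{diag}(A_m,\tfrac{d}{dz})$; with your written definition the off-diagonal piece is counted twice. Once $\widehat{A}_m$ is taken to be diagonal, the verifications you outline (the splitting $\widehat{\D}_\la=(\D_\la,0)^\top$, the diagonal transport of admissibility, the identification of the transfer function of $(A,\widehat B,\widehat C)$ with that of $(\A_0,\B_0,\K_0)$, and the lifting of the two $\mathcal{R}$-boundedness conditions via the bounded maps $x\mapsto(x,0)$ and $\pi_1$) all go through, and Corollary~\ref{SF-big-equa-result}(ii) gives the conclusion.

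Your route differs from the paper's. You lift the entire boundary problem to $X^q$ and apply Corollary~\ref{SF-big-equa-result}(ii) in one stroke, which forces you to transport the regular-system structure and both $\mathcal{R}$-bounds to the product-space triple. The paper instead keeps the Staffans--Weiss step in $X_0$: it applies Theorem~\ref{theorem:staffans-weiss-R-boundedness} directly to $(\A_0,\B_0,\K_0)$ under {\bf(A1)}--{\bf(A3)} and the assumed $\mathcal{R}$-boundedness to get $\A\in\mathscr{MR}(0,T;X_0)$; then, using that $F$ is $p$-admissible for $\A$ (Proposition~\ref{appl2-prop}, via {\bf(A4)}), it simply reruns the proof of Theorem~\ref{good-thm} with $\A$ replacing $\A_0$---the diagonal $\mathrm{diag}(\A,\tfrac{d}{dz})$ has maximal regularity, Lemma~\ref{Lemma-adm-calP} makes the off-diagonal block $p$-admissible, and Theorem~\ref{theorem:Miyadera} finishes. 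The paper's approach is more modular and avoids lifting any $\mathcal{R}$-bounds or input--output maps to $X^q$; your approach is more unified but pays for it with those (routine) diagonal transports.
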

\begin{proof}
According to Theorem \ref{theorem:staffans-weiss-R-boundedness}, we have $\A\in \mathscr{MR}(0,T;X_0)$. On the other hand as we have mentioned in the proof of Proposition \ref{appl2-prop} the operator $F$ is a $p$-admissible observation operator for $\A$. We then follow the same technique as in the proof of Theorem \ref{good-thm} to conclude that $\mathfrak{G} \in \mathscr{MR}(0,T;X^q)$.
\end{proof}

\begin{rem}
When perturbing boundary conditions of the integro-differential  Volterra equation \eqref{IACP}, even if the operator $F$ is a small perturbation of $\mathbb{A}_0$ which in turns implies that $B(\cdot)=a(\cdot)F$ satisfies the condition (i) of \cite[Thm 3.3]{Barta2} with respect to $\mathbb{A}_0$,  $B$ do not  necessarily satisfies the condition (i) \cite[Thm 3.3]{Barta2} with respect to $\mathbb{A}$. Hence , the result of \cite[Thm.3.3]{Barta2} is not applicable. It is to be noted that if we assume that $\A_0$ generates an analytic semigroup on $X_0$ and $F$ is $p$-admissible observation operator for $\A_0$ (in particular it is small perturbation for $\A_0$, due to Remark \ref{comparaison-Luts-small}), then $F$ is $p$-admissible for $\A$ (of course under the conditions ${\bf(A3)}$ and ${\bf(A4)}$). So that $F$ can be considered as a small perturbation for $\A$ due to Remark \ref{comparaison-Luts-small}. Even with this one cannot use [5, Thm 3.3] to conclude that the integro-differential  Volterra equation \ref{BIP1} has the maximal $L^p$-regularity, because for Barta \cite{Barta2} it is not clear that $\A$ has the  maximal $L^p$-regularity on $X$.

%
%
%
\end{rem}

\subsection{Example of a IDE with perturbed boundary conditions:}

Let $r,q\in(1,\infty)$ such that $\frac{1}{r}+\frac{1}{q}=1$. In this section, we first deal with the following PDE:
\begin{equation}
\label{PDE}
\begin{cases}
\frac{\partial }{\partial t} w(t,s) = \frac{\partial^2 }{\partial s^2} w(t,s) + f(t) , & \mbox{ } t\in [0,T],\ s\in (0,1)  \\
\frac{\partial }{\partial s}w(t,1) = w(t,1)-w(t,0), & \mbox{}  \\
\frac{\partial }{\partial s}w(t,0) = 0 & \mbox{}  \\
w(0,s) = 0, & \mbox{}
\
\end{cases}
\end{equation}

Now let $X_0=L^r(0,1)$ and $A_mf:=f''$ with domain $Z=\{w\in W^{2,r}(0,1),w'(0)=0 \}$. \\
We define on $Z$ the operator:
\begin{align*}
G:Z&\to \mathbb{R} \\
f&\to f'(0) .
\end{align*}
We also define the unbounded operator $K:Z\to \mathbb{R}$ by $Kf:=f(1)-f(0)$. One can see that under this setting, the problem (\ref{PDE}) can be transformed to :
\begin{equation*}
  \begin{cases}
  \dot{x}(t) = A_m x(t) , & \mbox{ } 0\leq t \leq T,\quad x(0)=0 \\
  Gx(t) = Kx(t), & \mbox{  } 0\leq t \leq T\quad .
  \
  \end{cases}
\end{equation*}
We define the operator:
$$
\A_0=A_m,\qquad D(\A_0)=Ker G=\{w\in W^{2,r}(0,1),w'(0)=0\quad and\quad w'(1)=0 \}.
$$
It is well known that this operator has maximal regularity. We set:
$$
\mathbb{D}_\lambda := (G_{|ker(\lambda -A_m)})
$$
and
$$
\B_0:=(\lambda - \A_{0,_{-1}})\mathbb{D}_\lambda
$$
and
$$
\K_0 :=K_{|D(\A_0)}
$$
Now the main result is the following theorem
\begin{thm}
The operator $\A$ defined by :
$$
\A f = f'',\qquad D(\A):=\{w\in W^{2,r}(0,1),w'(0)=0\quad and\quad w'(1)=w(1)-w(0) \},
$$
has maximal $L^p$-regularity for every $p>1$ and the problem (\ref{PDE}) has unique solution $w\in W^{1,p}([0,T];X)\cap L^{p}([0,T];D(\A))$ such that:
$$
\Vert \dot{w} \Vert_{L^p([0,T];X)} + \Vert w \Vert_{L^p([0,T];X)} + \Vert w''\Vert_{L^p([0,T];X)} \leq C\Vert f \Vert_{L^p([0,T];X)},
$$
for some constant $C>0$.
\end{thm}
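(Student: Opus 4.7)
The plan is to cast the PDE (\ref{PDE}) in the abstract boundary--perturbed framework (\ref{PM}) with $P\equiv 0$, apply Theorem \ref{theorem:staffans-weiss-R-boundedness} at $p=2$, and then invoke the $L^p$--invariance of maximal regularity (Remark \ref{remark:Maximal_Regularity_Operator}(ii), cf.\ \cite{Dore}) to extend the conclusion to every $p\in(1,\infty)$. I identify $X_0:=L^r(0,1)$, $U_0:=\R$, $Z_0:=\{w\in W^{2,r}(0,1):w'(0)=0\}$, $A_m w:=w''$, $Gw:=w'(1)$ and $Kw:=w(1)-w(0)$, so that (\ref{PDE}) is exactly (\ref{PM}) with $P=0$. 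Hypothesis (H1) is immediate since $w(s)=us^2/2\in Z_0$ yields $Gw=u$; (H2) holds because $\A_0$ is the Neumann Laplacian on $L^r(0,1)$, which generates a bounded analytic $C_0$--semigroup with bounded $H^\infty$--calculus of angle strictly less than $\pi/2$, so in particular $\A_0\in\mathscr{MR}(0,T;X_0)$; (H3)' follows from $W^{2,r}(0,1)\hookrightarrow C[0,1]$, which makes $K$ bounded on $Z_0$.

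For (H4)' I solve $\lambda w-w''=0$ with $w'(0)=0$, $w'(1)=u$ and get
$$
(\D_\lambda u)(s) \;=\; \frac{u\cosh(\sqrt{\lambda}\,s)}{\sqrt{\lambda}\,\sinh\sqrt{\lambda}}, \qquad H(\lambda)\;=\;\K_0\D_\lambda\;=\;\frac{\tanh(\sqrt{\lambda}/2)}{\sqrt{\lambda}},
$$
where $\K_0:=K|_{D(\A_0)}$, using $\cosh\sqrt{\lambda}-1=2\sinh^2(\sqrt{\lambda}/2)$. In particular $H(\lambda)\to 0$ as $\mathrm{Re}\,\lambda\to+\infty$. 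Classical Sobolev trace estimates for the 1D Neumann Laplacian (reflecting $D((-\A_0)^\alpha)\hookrightarrow C[0,1]$ for $\alpha>1/(2r)$) yield $2$--admissibility of both $\B_0:=(\lambda-\A_{0,-1})\D_\lambda$ and $\K_0$; combined with the decay of $H$ this ensures that $(\A_0,\B_0,\K_0)$ generates a regular linear system on $X_0,U_0,U_0$ with $I_{U_0}$ as admissible feedback.

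To check the $\mathcal{R}$--boundedness hypothesis, I use the identity $R(\lambda,\A_{0,-1})\B_0=\D_\lambda$ and the explicit Green's function $G(x,y,\lambda)=\cosh(\sqrt{\lambda}\min(x,y))\cosh(\sqrt{\lambda}(1-\max(x,y)))/(\sqrt{\lambda}\sinh\sqrt{\lambda})$ of the Neumann Laplacian to obtain
$$
\|\D_{\omega+is}\|_{\mathcal{L}(\R,L^r)}\;\lesssim\;|s|^{-\frac{1}{2}(1+\frac{1}{r})}, \qquad \|\K_0 R(\omega+is,\A_0)\|_{\mathcal{L}(L^r,\R)}\;\lesssim\;|s|^{-\frac{1}{2}(1+\frac{1}{r'})},
$$
with $1/r+1/r'=1$, both sharper than $|s|^{-1/2}$. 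Therefore at $p=q=2$ the families $\{|s|^{1/2}R(\omega+is,\A_{0,-1})\B_0\}_{s\neq 0}$ and $\{|s|^{1/2}\K_0R(\omega+is,\A_0)\}_{s\neq 0}$ are uniformly bounded in $s$, and the upgrade to $\mathcal{R}$--boundedness on the UMD space $L^r$ follows from the $\mathcal{R}$--sectoriality of $\A_0$ (a consequence of its bounded $H^\infty$--calculus of angle $<\pi/2$) via standard Kalton--Weis square function / multiplier arguments.

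Theorem \ref{theorem:staffans-weiss-R-boundedness} then delivers $\A\in\mathscr{MR}_2(0,T;L^r(0,1))$, and Remark \ref{remark:Maximal_Regularity_Operator}(ii) extends this to $\A\in\mathscr{MR}_p(0,T;L^r(0,1))$ for every $p\in(1,\infty)$. Existence and uniqueness of $w\in W^{1,p}([0,T];X_0)\cap L^p([0,T];D(\A))$ solving (\ref{PDE}) together with the stated norm estimate are then the content of the definition of maximal $L^p$--regularity combined with (\ref{max_reg_estim}); the zeroth-order term $\|w\|_{L^p}$ is absorbed via $w(0)=0$ and the fundamental theorem of calculus. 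The main technical obstacle is the extraction of the sharp uniform resolvent bounds for $\D_{\omega+is}$ and $\K_0R(\omega+is,\A_0)$ from the Green's function, together with the upgrade of uniform to $\mathcal{R}$--boundedness in the non-Hilbertian case $r\neq 2$; both are standard for the one-dimensional Neumann Laplacian with scalar boundary action.
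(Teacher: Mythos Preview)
Your overall strategy coincides with the paper's: reformulate \eqref{PDE} in the abstract framework, verify the hypotheses of Theorem~\ref{theorem:staffans-weiss-R-boundedness} at $p=2$, and then pass to all $p\in(1,\infty)$ by $p$--independence. Your explicit computation of $\D_\lambda$ and of $H(\lambda)=\tanh(\sqrt{\lambda}/2)/\sqrt{\lambda}$ is correct and yields the same decay $\|\D_\lambda\|\lesssim|\lambda|^{-(r+1)/(2r)}$ that the paper obtains and feeds into \cite[Thm.~2.4]{ABE} to produce regularity and the admissible feedback in one stroke; your more direct route here is fine.

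The genuine gap is in the $\mathcal{R}$--boundedness step. You establish \emph{uniform} bounds for $\{|s|^{1/2}\D_{\omega+is}\}$ and $\{|s|^{1/2}\K_0 R(\omega+is,\A_0)\}$ and then assert that the upgrade to $\mathcal{R}$--boundedness ``follows from the $\mathcal{R}$--sectoriality of $\A_0$ via standard Kalton--Weis square function / multiplier arguments.'' This is not a valid inference as stated: uniform boundedness of rank--one families in $\mathcal{L}(\R,L^r)$ or $\mathcal{L}(L^r,\R)$ does \emph{not} imply $\mathcal{R}$--boundedness when $r\neq 2$, and the Kalton--Weis theorem concerns $\mathcal{R}$--boundedness of sets of the form $\{f(\A_0)\}$, not of families involving the unbounded boundary operators $\K_0$ and $\B_0$, which are not functions of $\A_0$. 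Some factorisation through the functional calculus of $\A_0$ is needed, and you have not supplied one.

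The paper closes this gap by the technique of \cite[p.~528]{HaaKun}: one chooses an intermediate space $\tilde{Z}_0=W^{1,r}(0,1)$ with $D(\A_0)\subset\tilde{Z}_0\subset X_0$, observes that $\K_0$ extends boundedly to $\tilde{Z}_0$, and then uses that $\{s^{1/2}R(s+1,\A_0):s>0\}$ is $\mathcal{R}$--bounded in $\mathcal{L}(X_0,\tilde{Z}_0)$ (this is where the $\mathcal{R}$--sectoriality/$H^\infty$--calculus of $\A_0$ enters, since $\tilde{Z}_0$ is essentially $D((-\A_0)^{1/2})$). For the control side one passes to the adjoint, identifies $\B_0^*=\delta_1$, and repeats the same argument. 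Either spell out this factorisation explicitly, or invoke Proposition~\ref{corollary:staffans_weiss_R_boundedness} with $\alpha=(r+1)/(2r)\in(1/2,1)$ after verifying the $\mathcal{R}$--boundedness of $\{s^\alpha\D_{\omega+is}\}$ by the same intermediate--space method; the bare appeal to ``standard arguments'' does not suffice.
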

\begin{proof}

We will show that conditions of Theorem \ref{theorem:staffans-weiss-R-boundedness} are satisfied. According to \cite{ABE} Theorem 2.4, to show that the triple $(\A_0,\B_0,\K_0)$ generates a regular linear system, all we have to show is that there exist $\beta \in [0,1]$ and $\gamma \in (0,1]$ such that :

\begin{enumerate}
\item[(i)] $range(\mathbb{D}_\lambda) \subset F_{1-\beta}^{\A_0}$ (where $F_{\alpha}^{\A_0}$ is the Favard space of $\A_0$ of order $\alpha$)
\item[(ii)] $D[(\lambda - \A_0)^\gamma] \subset Z$
\item[(iii)] $\beta + \gamma <1$.
\end{enumerate}
Let prove these three propositions:

\item[(i)] By simple calculation, one can see that there exist $\lambda_0  > 0$ such that
$$
\sup_{\lambda>\lambda_0}\Vert \lambda^\frac{r+1}{2r} \mathbb{D}_\lambda \Vert <+\infty.
$$
This fact implies (see \cite{ABE}, Lemma A.1) that $range(\mathbb{D}_\lambda) \subset F_{\frac{r+1}{2r}}^{\A_0}$ for some $\lambda\in \rho(\A_0)$. We take $\beta = 1-\frac{r+1}{2r} = \frac{r-1}{2r}$.

\item[(ii)] According to \cite[Lemma A.2]{ABE}, we have to show that for $\alpha \in (0,1)$ and every $\rho\geq \rho_0>0$ we have
$$
\vert \K_0 f\vert \leq M(\rho^\alpha \Vert f\Vert_r + \rho^{\alpha-1}\Vert f''\Vert_r),\quad f\in D(\A_0)
$$
for some constant $M>0$. For $f\in D(\A_0)$ we know that
$$
f(1)= f(0) + \int_0^1 f'(s)ds,
$$
then we can easily show that :
$$
\vert f(1)-f(0)\vert \leq \Vert f'\Vert_r\quad .
$$
By \cite[Example III.2.2]{EngNag}, we have for $\epsilon>0$
$$
\Vert f'\Vert_r \leq \frac{9}{\epsilon}\Vert f\Vert_r + \epsilon \Vert f''\Vert_r\quad .
$$
By taking $\rho = \epsilon^{-3}$, $\alpha = \frac{1}{3}$ and $\gamma \in (\frac{1}{3},\frac{1}{r})$(we can always take $1<r<3$), we show the assertion.
\item[(iii)] Clearly we have $\beta + \gamma <1$

Hence, by \cite[Theorem 2.4]{ABE}, the operators $\B_0$ and $\K_0$ are $p$-admissible for $\frac{2r}{r+1}<p<\frac{1}{\gamma}$ and the triple $(\A_0,\B_0,\K_0)$ generates a regular system and the identity is an admissible feedback. For instance we can take $p=2$.

To show that the sets $\{s^\frac{1}{2} \K_0 R(is,\A_0);s\neq 0\}$ and $\{s^\frac{1}{2} R(is,A_{0,-1})\B_0;s\neq 0\}$ are $\mathcal{R}$-bounded, it is sufficient to show that $\K_0$ and $\B_0$ are $l$-admissible ($l$-admissibility is more general than admissibility, see \cite{HaaKun} for definitions), namely, it is sufficient to show that the sets $\{s^\frac{1}{2}\K_0 R(s+1,\A_0);s>0\}$ and $\{s^\frac{1}{2}R(s+1,\A_{0,-1})\B_0;s>0\}$ are $\mathcal{R}$-bounded (see \cite[page 514]{HaaKun}).

In order to prove that $\{s^\frac{1}{2}\K_0R(s+1,\A_0);s>0\}$ is $\mathcal{R}$-bounded, we follow the example in \cite[page 528]{HaaKun} and the technique used there to show the $\mathcal{R}$-boundedness of the above set, namely it suffices to find a space $\tilde{Z}_0$ such that $D(\A_0)\subset \tilde{Z_0}\subset X_0$ and $\K_0$ is bounded in $\Vert \cdot \Vert_{\tilde{Z}_0\to\mathbb{R}}$ and the set $\{s^\frac{1}{2}R(s+1,\A_0);s>0\}$ is $\mathcal{R}$-bounded in $\mathcal{L}(\tilde{Z}_0,X)$, this holds, by the same example, for $\tilde{Z}_0=W^{1,p}(0,1)$ since $\K_0$ is bounded in $\mathcal{L}(\tilde{Z}_0,\mathbb{R})$.

Now to show the $\mathcal{R}$-boundedness of $\{s^\frac{1}{2}R(s+1,A_{0,-1})\B_0;s>0\}$ we follow the same example, we show that the set $\{s^\frac{1}{2}\B^{*}_0 R(s+1,\A^{*}_0);s>0\}$ is $\mathcal{R}$-bounded. Let us first determine $\B^{*}_0$, we proceed again as in \cite{HaaKun}, multiplying the first equation in (\ref{PDE}) with a fixed $v\in C^\infty([0,1])$ and integrating by parts we obtain:
$$
<w'(t,\cdot),v>_{(0,1)} = <w(t,\cdot),v''>_{(0,1)} + Gw(t,\cdot)\delta_1 v,
$$
this means that $\B^{*}_0  = \delta_1$. By the same argument used to show that $\{s^\frac{1}{2}\K_0R(s+1,\A_0);s>0\}$ is $\mathcal{R}$-bounded, we show that $\{s^\frac{1}{2}\B^{*}_0 R(s+1,\A^{*}_0);s>0\}$ is $\mathcal{R}$-bounded.

All hypothesis of Theorem \ref{theorem:staffans-weiss-R-boundedness} are satisfied, and hence the operator $\A$ defined above
has the maximal $L^2$-regularity, thus maximal $L^p$-regularity for every $p>1$ and then problem (\ref{PDE}) has unique solution $w\in W^{1,p}([0,T];X_0)\cap L^{p}([0,T];D(\A))$ such that:
$$
\left\Vert \frac{\partial }{\partial t} w(\cdot,\cdot) \right\Vert_{L^p([0,T];X_0)} + \Vert w(\cdot,\cdot) \Vert_{L^p([0,T];X)} + \Vert \frac{\partial^2 }{\partial s^2}w(\cdot,\cdot) \Vert_{L^p([0,T];X_0)} \leq C\Vert f \Vert_{L^p([0,T];X_0)},
$$
for some constant $C>0$.

\end{proof}
Now, we can consider the following partial integro-differential equation (PIDE) governed by the heat equation

\begin{equation}
\label{PDEV}
\begin{cases}
\displaystyle\frac{\partial }{\partial t} w(t,s) = \frac{\partial^2 }{\partial s^2} w(t,s)+\displaystyle \int_0^t a(t-\tau)F u(\tau,s)d\tau + f(t) , & \mbox{ } t\in [0,T],\ s\in (0,1)  \\
\displaystyle\frac{\partial }{\partial s}w(t,1) = w(t,1)-w(t,0), & \mbox{}  \\
\displaystyle\frac{\partial }{\partial s}w(t,0) = 0, & \mbox{}  \\
w(0,s) = 0, & \mbox{}
\
\end{cases}
\end{equation}
 where $F:Z_0\to X_0$ is a linear operator and $a\in B^q_{h,\C}$ (see notation above). We will use the same notation as in the beginning of this subsection.
We then assume that $\A_0$ has the maximal $L^p$-regularity. By applying Theorem \ref{Volterra-boundary}, we can easily check maximal regularity of (\ref{PDEV}) if the triple $(\A_0,\B_0,F_{|D(\A_0)})$ generates a regular linear system. Let $F= (-\A_0)^\theta$ for some $\theta \in (0,\frac{1}{p})$. By Remark \ref{remark-for-example}, the triple $(\A_0,\B_0,(-\A_0)^\theta_{|D(\A_0)})$ generates a regular linear system. Now, all assumptions of Theorem \ref{Volterra-boundary} are verified, the equation \eqref{PDEV} has the maximal $L^p$--regularity.

\bibliographystyle{amsplain}

\end{document}